\setlist[enumerate]{itemsep=0.5\baselineskip}
\setlist[itemize]{itemsep=0.5\baselineskip}
\begin{document}
\allowdisplaybreaks

\numberwithin{equation}{section}
\theoremstyle{plain}
\newtheorem{Proposition}[equation]{Proposition}
\newtheorem{Corollary}[equation]{Corollary}
\newtheorem*{Corollary*}{Corollary}
\newtheorem{Theorem}[equation]{Theorem}
\newtheorem*{Theorem*}{Theorem}
\newtheorem{Lemma}[equation]{Lemma}
\theoremstyle{definition}
\newtheorem{Definition}[equation]{Definition}
\newtheorem{Conjecture}[equation]{Conjecture}
\newtheorem{Example}[equation]{Example}
\newtheorem{Remark}[equation]{Remark}

\def\C{\mathbb{C}}
\def\R{\mathbb{R}}
\def\D{\mathbb{D}}
\def\T{\mathbb{T}}
\def\N{\mathbb{N}}
\def\Z{\mathbb{Z}}

\newcommand{\0}{{\textcolor{lightgray}0}}
\newcommand{\M}{\mathsf{M}}
\renewcommand{\P}{\mathsf{PI}}
\newcommand{\U}{\mathsf{U}}
\newcommand{\K}{\mathcal{K}}
\newcommand{\minimatrix}[4]{\begin{bmatrix} #1 & #2 \\ #3 & #4 \end{bmatrix}  }
\newcommand{\twovector}[2]{\begin{bmatrix} #1 \\ #2 \end{bmatrix} }
\newcommand{\threevector}[3]{\begin{bmatrix} #1 \\ #2 \\ #3 \end{bmatrix} }
\newcommand{\fourrowvector}[4]{[#1\,\,#2\,\,#3\,\,#4]}
\newcommand{\vecspan}{\operatorname{span}}
\newcommand{\colspace}{\operatorname{colspace}}
\newcommand{\nullspace}{\operatorname{nullspace}}
\newcommand{\nullity}{\operatorname{nullity}}
\newcommand{\norm}[1]{\| #1 \|}
\newcommand{\inner}[1]{\langle #1 \rangle}
\newcommand{\tr}{\operatorname{tr}}
\newcommand{\diag}{\operatorname{diag}}
\newcommand{\rank}{\operatorname{rank}}
\renewcommand{\labelenumi}{(\alph{enumi})}
\newcommand{\comment}[1]{\marginpar{\tiny\color{blue} $\bullet$ #1}}
\renewcommand{\Re}{\operatorname{Re}}
\renewcommand{\Im}{\operatorname{Im}}
\newcommand{\co}{\operatorname{conv}}

\renewcommand{\le}{\leqslant}
\renewcommand{\leq}{\leqslant}
\renewcommand{\ge}{\geqslant}
\renewcommand{\geq}{\geqslant}
\renewcommand{\setminus}{\backslash}
\newcommand{\ran}{\operatorname{ran}}

\renewcommand{\subset}{\subseteq}
\renewcommand{\supset}{\supseteq}

\renewcommand{\vec}[1]{{\bf #1}}

\title[Partially isometric matrices]{Partially isometric matrices: a brief and selective survey}

 \author[S.R.~Garcia]{Stephan Ramon Garcia}
   \address{Department of Mathematics,
            Pomona College,
            Claremont, California
            91711, USA}
    \email{Stephan.Garcia@pomona.edu}
    \thanks{The first author was partially supported by NSF grant DMS-1800123.}

 \author[M.O.~Patterson]{Matthew Okubo Patterson}
   \address{Department of Mathematics,
            Pomona College,
            Claremont, California
            91711, USA}
 \email{matthewpatterson477@gmail.com}

\author[W.T.~Ross]{William T. Ross}
	\address{Department of Mathematics and Computer Science, University of Richmond, Richmond, VA 23173, USA}
	\email{wross@richmond.edu}
	
\subjclass[2010]{15B10, 15B99, 15A23, 15A60, 15A18}

\keywords{Partial isometry, unitary matrix, partially isometric matrix, compressed shift, singular value decomposition, polar decomposition, numerical range, Moore--Penrose inverse, pseudoinverse, characteristic function, similarity, unitary similarity}

\begin{abstract}
We survey a variety of results about partially isometric matrices.  We focus primarily on results that are distinctly finite-dimensional.  For example, we cover a recent solution to the similarity problem for partial isometries.  We also discuss the unitary similarity problem and several other results.
\end{abstract}
	
\maketitle

\section{Introduction}

This paper is a selective survey about partially isometric matrices.
These matrices are characterized by the equation $AA^*A = A$, in which $A^*$ denotes the conjugate transpose of $A$.  We refer to such a matrix as a \emph{partial isometry}.
Much of this material dates back to early work of 
Erd\'elyi \cite{Erd,MR0255557,ErdelyiProduct}, Halmos \& McLaughlin \cite{Halmos}, and Hearon \cite{MR0225790}, among others.
Some of our results will be familiar to many readers.  Others are more recent or perhaps not so well known.

The study of partial isometries on infinite-dimensional spaces is much richer and more difficult.
A proper account of the infinite-dimensional setting would occupy a large volume and we  
therefore restrict ourselves here to the finite-dimensional case.
For the sake of simplicity, and because we are interested in topics such as similarity and unitary similarity, 
we further narrow our attention to square matrices.  However, many of the following results hold for nonsquare matrices if the indices and subscripts
are adjusted appropriately.

This survey is organized as follows.
Section \ref{Section:Preliminaries} introduces the basic properties of partial isometries.
In particular, the connection between partial isometries, orthogonal projections, and subspaces
is considered.
Section \ref{Section:Algebraic} covers the algebraic structure of partial isometries.  For example, we consider the singular value and polar decompositions, the Moore--Penrose pseudoinverse, and products of partial isometries.
In Section \ref{Section:Similar} we study the similarity problem for partial isometries and characterize their spectra and Jordan canonical forms.
Section \ref{Section:UnitarySimilar} concerns various topics connected to unitary similarity.
For example, partial isometric extensions of contractions, the Liv\v{s}ic characteristic function, and the
Halmos--McLaughlin characterization of defect-one partial isometries are covered.
We conclude in Section \ref{Section:Compressed} with a brief treatment of the compressed shift operator,
a concrete realization of certain partial isometries in terms of operators on spaces of rational functions.

\subsection*{Notation}
In what follows, 
$\M_{m \times n}$ denotes the set of $m \times n$ complex matrices.  We write $\M_n$ for the set of $n \times n$ complex matrices.
A convenient shorthand for the $n \times n$ diagonal matrix with diagonal entries $\lambda_1,\lambda_2,\ldots,\lambda_n$ is 
$\diag(\lambda_1,\lambda_2,\ldots,\lambda_n)$.
The spectrum of $A \in \M_n$ (the set of eigenvalues of $A$) is denoted $\sigma(A)$ and its characteristic polynomial is
$p_A(z) = \det (z I -A)$.  The open unit disk $|z| < 1$ and unit circle $|z| = 1$ are denoted $\D$ and $\T$, respectively.
We write $I_n$ and $0_n$ for the $n \times n$ identity and zero matrices, respectively.  Occasionally $0$ denotes
a zero matrix whose size is to be inferred from context.
Boldface letters, such as $\vec{x}$, denote column vectors.  Zero vectors are written as $\vec{0}$ and their lengths 
determined from context.  A row vector is the transpose $\vec{x}^{\mathsf{T}}$ of a column vector.
The range (or column space) and kernel (or nullspace) of $A \in \M_n$ are denoted $\ran A$ and $\ker A$,
respectively.  By $\|A\|$ we mean the operator norm of $A$, the maximum of $\|A \vec{x}\|$ for $\|\vec{x}\| = 1$.

\section{Preliminaries}\label{Section:Preliminaries}

Although partially isometric matrices enjoy several equivalent definitions,
we choose a distinctively algebraic approach because of its intrinsic nature.  
This suits the matrix-theoretic perspective adopted in this article and permits us to phrase things 
mostly in terms of matrices (as opposed to subspaces).

\begin{Definition}\label{Definition:PI}
$A\in \M_n$ is a \emph{partially isometric matrix} (or \emph{partial isometry}) if $AA^*A = A$.
\end{Definition}

The preceding definition is concise.  However, it does not provide much intuition about what a partial isometry is,
although it does hint at potential relationships with unitary matrices, orthogonal projections, and the Moore--Penrose pseudoinverse.  All of these suggestions
are fruitful and relevant.   

Before proceeding, we require a brief review of two important topics.
We say that $A,B \in \M_n$ are \emph{unitarily similar} (denoted $A \cong B$) 
if there is a unitary $U \in \M_n$ such that $A = UBU^*$.  
As the notation $\cong$ suggests, unitary similarity is an equivalence relation on $\M_n$.
Recall that $P \in \M_n$ is an \emph{orthogonal projection} if
$P$ is Hermitian and idempotent ($P=P^*$ and $P^2 = P$).
The spectrum of an orthogonal projection
is contained in $\{0,1\}$ and the spectral theorem ensures that
$P \cong I_r \oplus 0_{n-r}$, in which $r = \rank P$.  We permit $P = 0$ and $P= I$ in the degenerate cases
$r = 0$ and $r=n$, respectively.  In particular, 
$\C^n = \ker P \oplus \ran P$,
in which $\ker P$ and $\ran P$, the eigenspaces corresponding 
to $0$ and $1$, respectively, are orthogonal. 

We now investigate several consequences of Definition \ref{Definition:PI} and identify a few distinguished
classes of partial isometries. 

\begin{Proposition}\label{Proposition:Basic}
\hfill
\begin{enumerate}[leftmargin=*]
\item $A$ is a partial isometry if and only if $A^*$ is a partial isometry.

\item If $P \in \M_n$ is an orthogonal projection, then $P$ is a partial isometry.

\item If $A \in \M_n$ is a partial isometry and $U,V \in \M_n$ are unitary, then $UAV$ is a partial isometry.

\item A matrix that is unitarily similar to a partial isometry is a partial isometry.

\item If $U \in \M_n$ is unitary, then $U$ is a partial isometry.

\item If $A$ is a normal partial isometry, then $A$ is unitarily similar to the
direct sum of a zero matrix and a unitary matrix (either factor may be omitted).

\item An invertible partial isometry is unitary.
\end{enumerate}
\end{Proposition}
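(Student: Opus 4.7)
The proposition is a catalogue of quick consequences of the defining identity $AA^*A = A$, so my plan is to verify each item by a short direct manipulation, with the spectral theorem doing the only nontrivial work (in (f)).

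For (a), I would take adjoints of $AA^*A = A$ to obtain $A^*AA^* = A^*$. For (b), the computation $PP^*P = P^3 = P$ suffices, using $P = P^* = P^2$. For (c), I would expand
\[
(UAV)(UAV)^*(UAV) = UA(VV^*)A^*(U^*U)AV = U(AA^*A)V = UAV,
\]
using $U^*U = VV^* = I$. Part (d) is the special case of (c) with $V = U^*$, and (e) reduces to $UU^*U = IU = U$.

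For (f), I would apply the spectral theorem to write $A = WDW^*$ for some unitary $W$ and diagonal $D = \diag(\lambda_1,\ldots,\lambda_n)$. By (d), $D$ is itself a partial isometry, and reading $DD^*D = D$ entrywise yields $|\lambda_i|^2 \lambda_i = \lambda_i$, so each $\lambda_i$ is either $0$ or unimodular. A coordinate permutation (another unitary similarity) groups the zeros and the unit-modulus entries into the claimed block form $0 \oplus U$, with either block omitted if absent.

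Finally, (g), the capstone: multiplying $AA^*A = A$ on the right by $A^{-1}$ yields $AA^* = I$. Since $A \in \M_n$ is square, this forces $A^*A = I$ as well, and $A$ is unitary. The main obstacle, such as it is, lies in (f), where one must invoke (d) to justify that the diagonalization preserves the partial-isometry property; every other step is essentially a one-line verification from the definition.
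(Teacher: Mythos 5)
Your proposal is correct and follows essentially the same route as the paper: direct verification from $AA^*A=A$ for (a)--(e) and (g), and the spectral theorem plus unitary invariance (part (d)) for (f). The only differences are cosmetic — you verify (e) directly instead of as a special case of (c), and cancel $A$ on the right rather than the left in (g).
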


\begin{proof}
\smallskip\noindent(a) $AA^*A = A$ and $A^*(A^*)^*A^* = A^*$ are adjoints of each other.

\smallskip\noindent(b) If $P \in \M_n$ is an orthogonal projection, then
$PP^*P = P^3 = P$.

\smallskip\noindent(c) If $A \in \M_n$ is a partial isometry and $B= UAV$, in which $U,V \in \M_n$ are unitary,
then $BB^*B = (UAV)(UAV)^*(UAV) = UAA^*AV = UAV = B$.  

\smallskip\noindent(d) Let $V = U^*$ in (c).

\smallskip\noindent(e) Let $A = V = I$ in (c).

\smallskip\noindent(f) Suppose that $A\in \M_n$ is a normal partial isometry.
In light of the spectral theorem and (d), we may assume that $A$ is diagonal.  Then
$A = AA^*A$ implies that $\lambda = \lambda|\lambda|^2$ for all $\lambda \in \sigma(A)$.
Thus, $\sigma(A) \subseteq \{0\} \cup \T$ and hence $A$ is the 
direct sum of a zero matrix and a unitary matrix (either factor may be omitted).

\smallskip\noindent(g) If $A \in \M_n$ is invertible and $AA^*A = A$, then $A^*A = I$.  Thus, $A$ is unitary.
\end{proof}

An important relationship between partial isometries and orthogonal projections
is contained in the following theorem.

\begin{Theorem}\label{Theorem:PI}
For $A \in \M_{n}$ the following conditions are equivalent.
\begin{enumerate}
\item $A$ is a partial isometry.
\item $A^*A$ is an orthogonal projection (in fact, the projection onto $(\ker A)^{\perp}$).
\item $AA^*$ is an orthogonal projection (in fact, the projection onto $\ran A$). 
\end{enumerate}
\end{Theorem}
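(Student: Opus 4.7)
The plan is to prove (a) $\Leftrightarrow$ (b) directly, and then deduce (a) $\Leftrightarrow$ (c) by applying (a) $\Leftrightarrow$ (b) with $A^*$ in place of $A$, which is legitimate by Proposition \ref{Proposition:Basic}(a).

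For (a) $\Rightarrow$ (b), note that $A^*A$ is automatically Hermitian, so I need only verify idempotency. Multiplying $AA^*A = A$ on the left by $A^*$ yields $(A^*A)^2 = A^*A$, proving $A^*A$ is an orthogonal projection. To pin down the range, I combine the standard identity $\ker(A^*A) = \ker A$ (a quick consequence of $\|A\vec{x}\|^2 = \inner{A^*A\vec{x}, \vec{x}}$) with the fact that the range of any orthogonal projection is the orthogonal complement of its kernel, obtaining $\ran(A^*A) = (\ker(A^*A))^{\perp} = (\ker A)^{\perp}$.

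For (b) $\Rightarrow$ (a), I would expand
\[ (AA^*A - A)^*(AA^*A - A) = A^*AA^*AA^*A - 2A^*AA^*A + A^*A \]
and invoke the hypothesis $(A^*A)^2 = A^*A$ repeatedly to collapse the right-hand side to $A^*A - 2A^*A + A^*A = 0$. Since $B^*B = 0$ forces $B = 0$, we conclude $AA^*A = A$.

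For (a) $\Leftrightarrow$ (c), applying the already-proved equivalence (a) $\Leftrightarrow$ (b) to $A^*$ shows that $A^*$ is a partial isometry iff $(A^*)^*(A^*) = AA^*$ is the orthogonal projection onto $(\ker A^*)^{\perp} = \ran A$; Proposition \ref{Proposition:Basic}(a) then closes the loop. The only real subtlety lies in the collapsing trick in (b) $\Rightarrow$ (a): one must recognize that every term in $(AA^*A - A)^*(AA^*A - A)$ is a word in the single combination $A^*A$, so the algebraic identity $(A^*A)^2 = A^*A$ alone suffices to annihilate it. Once that pattern is spotted, the rest of the argument is routine algebra plus the two standard range/kernel facts used above.
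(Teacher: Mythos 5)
Your proposal is correct, and its overall skeleton matches the paper's: prove (a) $\Leftrightarrow$ (b), then get (c) by applying that equivalence to $A^*$ together with Proposition \ref{Proposition:Basic}(a). The (a) $\Rightarrow$ (b) direction and the range identifications (via $\ker A^*A = \ker A$ and $(\ker A^*)^\perp = \ran A$) are essentially identical to the paper's. Where you genuinely diverge is (b) $\Rightarrow$ (a): the paper argues spatially, using that $A^*A$ is the projection onto $(\ker A)^\perp$ to check $AA^*A\vec{x} = A\vec{x}$ separately for $\vec{x} \in \ker A$ and $\vec{x} \in (\ker A)^\perp$, whereas you expand
\[
(AA^*A - A)^*(AA^*A - A) = (A^*A)^3 - 2(A^*A)^2 + A^*A
\]
and collapse it to $0$ using only the idempotency $(A^*A)^2 = A^*A$, then invoke $B^*B = 0 \Rightarrow B = 0$. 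Your computation checks out, and it has the advantage of being purely algebraic: it needs no identification of the projection's range for that direction and would survive verbatim in any $C^*$-algebra, in the same spirit as the norm-zero trick the paper itself uses later in Lemma \ref{Lemma:ContractiveIdempotent}. The paper's decomposition argument, by contrast, keeps the initial-space geometry in view, which is the picture the surrounding section is trying to build. Both routes are short and sound; yours trades geometric intuition for algebraic economy.
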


\begin{proof}
\smallskip\noindent (a) $\Rightarrow$ (b) If $AA^*A = A$, then $(A^*A)^2 = A^*A$.  Since
$A^*A$ is selfadjoint and idempotent, it is an orthogonal projection.  Since\footnote{First observe that $\ker A \subseteq \ker A^*A$.  For the converse, note that if $\vec{x}\in \ker A^*A$, then $\norm{ A \vec{x}}^2 = \inner{ A\vec{x}, A \vec{x}} = \inner{A^*A\vec{x}, \vec{x}} = 0$ and hence $\vec{x} \in \ker A$.  Thus, $\ker A^*A \subseteq \ker A$.} 
$\ker A^*A = \ker A$, it follows that $A^*A$ is the orthogonal projection onto $(\ker A)^{\perp}$.

\smallskip\noindent (b) $\Rightarrow$ (a) If $A^*A$ is an orthogonal projection, then it is the orthogonal projection onto $(\ker A)^{\perp}$.
For $\vec{x} \in \ker A$, we have $A \vec{x} = \vec{0} = AA^*A\vec{x}$.
If $\vec{x} \in (\ker A)^{\perp}$, then $\vec{x} = A^*A\vec{x}$ and hence
$A\vec{x} = A(A^*A\vec{x}) = (AA^*A)\vec{x}$.
Thus, $A = AA^*A$.

\smallskip\noindent (b) $\Leftrightarrow$ (c) 
Proposition \ref{Proposition:Basic} and the equivalence (a) and (b) ensure that
$A^*A$ is an orthogonal projection $\Leftrightarrow$ $A$ is a partial isometry
$\Leftrightarrow$ $A^*$  is a partial isometry  $\Leftrightarrow$ $(A^*)^*(A^*) = AA^*$ is a partial isometry.
\end{proof}

\begin{Corollary}\label{Corollary:Norm}
If $A \in \M_n$ is a partial isometry and $A \neq 0$, then $\norm{A} = 1$.
\end{Corollary}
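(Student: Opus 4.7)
The plan is to combine Theorem \ref{Theorem:PI}, which identifies $A^*A$ with the orthogonal projection onto $(\ker A)^\perp$, with the standard identity $\norm{A\vec{x}}^2 = \inner{A^*A\vec{x},\vec{x}}$ to show that $A$ acts isometrically on $(\ker A)^\perp$ and is zero on $\ker A$; the hypothesis $A \neq 0$ then guarantees that $(\ker A)^\perp$ is nontrivial so that this isometric action is actually witnessed.

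First I would observe that since $A \neq 0$, the subspace $(\ker A)^\perp$ must be nonzero (otherwise every vector lies in $\ker A$ and $A = 0$). Pick any unit vector $\vec{x} \in (\ker A)^\perp$. By Theorem \ref{Theorem:PI}(b), $A^*A$ is the orthogonal projection onto $(\ker A)^\perp$, so $A^*A\vec{x} = \vec{x}$, and hence
\[
\norm{A\vec{x}}^2 = \inner{A\vec{x}, A\vec{x}} = \inner{A^*A\vec{x}, \vec{x}} = \inner{\vec{x}, \vec{x}} = 1.
\]
This shows $\norm{A} \geq 1$.

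For the reverse inequality, I would take an arbitrary unit vector $\vec{x} \in \C^n$ and decompose it orthogonally as $\vec{x} = \vec{y} + \vec{z}$ with $\vec{y} \in \ker A$ and $\vec{z} \in (\ker A)^\perp$, so that $\norm{\vec{y}}^2 + \norm{\vec{z}}^2 = 1$. Then $A\vec{x} = A\vec{z}$, and since $A^*A\vec{z} = \vec{z}$ as above,
\[
\norm{A\vec{x}}^2 = \norm{A\vec{z}}^2 = \inner{A^*A\vec{z},\vec{z}} = \norm{\vec{z}}^2 \leq 1.
\]
Taking the supremum over unit vectors gives $\norm{A} \leq 1$, completing the proof.

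There is no serious obstacle here; the one point to be careful about is to first verify that $A \neq 0$ forces $(\ker A)^\perp \neq \{\vec 0\}$ so that the lower-bound argument has something to act on. Everything else is a direct application of Theorem \ref{Theorem:PI} and the polarization-style identity $\norm{A\vec x}^2 = \inner{A^*A\vec x, \vec x}$.
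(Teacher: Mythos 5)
Your argument is correct, but it takes a more hands-on route than the paper. The paper's proof is a one-liner: it invokes the norm identity $\norm{A}^2 = \norm{A^*A}$ and then observes that $A^*A$ is a nonzero orthogonal projection (by Theorem \ref{Theorem:PI} and $A \neq 0$), hence has norm $1$. You instead work entirely at the level of vectors: you use the same key fact from Theorem \ref{Theorem:PI} --- that $A^*A$ is the orthogonal projection onto $(\ker A)^\perp$ --- but unpack both inequalities by hand, exhibiting a unit vector in $(\ker A)^\perp$ on which $A$ is isometric (lower bound) and decomposing an arbitrary unit vector as $\vec{y}+\vec{z}$ with $\vec{y}\in\ker A$, $\vec{z}\in(\ker A)^\perp$ to get $\norm{A\vec{x}}^2=\norm{\vec{z}}^2\leq 1$ (upper bound). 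In effect you re-derive the content of Proposition \ref{Proposition:Isometric} together with the contraction bound, whereas the paper leans on the identity $\norm{A}^2=\norm{A^*A}$ to compress all of this into one line. Your version is more self-contained and makes the geometry explicit; the paper's is shorter and highlights the algebraic mechanism. One cosmetic quibble: the identity $\norm{A\vec{x}}^2 = \inner{A^*A\vec{x},\vec{x}}$ is just the definition of the adjoint, not a polarization identity, so you should not call it ``polarization-style.''
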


\begin{proof}
If $A \in \M_n$ is a partial isometry and $A \neq 0$, 
then $\norm{A}^2 = \norm{A^*A} = 1$ since $A^*A$ is a nonzero orthogonal projection.
\end{proof}

\begin{Example}\label{Example:ThreePIs}
The matrices
\begin{equation*}
	A=
    \begin{bmatrix}
        0 & 0\\[2pt]
        \frac{ \sqrt{3} }{2}  & \frac{1}{2}
    \end{bmatrix}
    ,\qquad
   B=  \minimatrix{0}{1}{0}{0}   ,
    \quad \text{and} \quad
    C=\frac{1}{3}
    \begin{bmatrix}
 2 & -1 & 0 \\
 2 & 2 & 0 \\
 -1 & 2 & 0 \\
    \end{bmatrix}
\end{equation*}
are partial isometries since
\begin{equation*}
A^*A= 
\begin{bmatrix}
\frac{3}{4} & \frac{\sqrt{3}}{4} \\[2pt]
\frac{\sqrt{3}}{4} & \frac{1}{4}
\end{bmatrix},\qquad
B^*B = \minimatrix{0}{\0}{\0}{1},\qquad
\quad \text{and} \quad 
C^*C = 
\begin{bmatrix}
1 & \0 & \0 \\
\0 & 1 & \0 \\
\0 & \0 & 0 \\
\end{bmatrix}
\end{equation*}
are orthogonal projections.  Also note that
\begin{equation*}
AA^* = \minimatrix{0}{\0}{\0}{1}, \qquad
BB^* = \minimatrix{1}{\0}{\0}{0},
\quad \text{and} \quad
CC^* = \frac{1}{9}
\begin{bmatrix}
 5 & 2 & -4 \\
 2 & 8 & 2 \\
 -4 & 2 & 5 \\
\end{bmatrix}
\end{equation*}
are orthogonal projections.    
\end{Example}

\begin{Example}\label{Example:N0}
If $N \in \M_{n \times r}$ has orthonormal columns, then $A = [N\,\,0] \in \M_n$
is a partial isometry since $N^*N = I \in \M_r$ and hence
\begin{equation*}
A^*A = \twovector{N^*}{0}[N\,\,0]=\minimatrix{N^*N}{0}{0}{0} = \minimatrix{I_r}{0}{0}{0_{n-r}}
\end{equation*}
is an orthogonal projection.
\end{Example}

\begin{Definition}
If $A \in \M_n$ is a partial isometry, then $(\ker A)^{\perp}$
is the \emph{initial space} of $A$ and $\ran A$ is the \emph{final space} of $A$.
\end{Definition}

If $A \in \M_n$ is a partial isometry, then $A^*A$ and $AA^*$ are orthogonal projections.
We can be more specific:  they are the orthogonal projections onto the initial and final spaces of $A$, respectively.  
The following proposition indicates the origin of the term ``partial isometry.''

\begin{Proposition}\label{Proposition:Isometric}
If $A \in \M_n$ is a partial isometry, then $A$ maps $(\ker A)^{\perp}$ isometrically onto $\ran A$.
\end{Proposition}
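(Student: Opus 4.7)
The plan is to combine the orthogonal decomposition $\C^n = \ker A \oplus (\ker A)^{\perp}$ with the key fact, already established in Theorem \ref{Theorem:PI}, that $A^*A$ is the orthogonal projection onto $(\ker A)^{\perp}$. With that identity in hand, the isometric property falls out of a single short inner-product calculation, and surjectivity onto $\ran A$ is essentially automatic.

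More concretely, I would first verify the isometric condition. Take $\vec{x} \in (\ker A)^{\perp}$. Since $A^*A$ is the orthogonal projection onto $(\ker A)^{\perp}$, we have $A^*A\vec{x} = \vec{x}$, and therefore
\begin{equation*}
\norm{A\vec{x}}^2 = \inner{A\vec{x},A\vec{x}} = \inner{A^*A\vec{x},\vec{x}} = \inner{\vec{x},\vec{x}} = \norm{\vec{x}}^2.
\end{equation*}
This shows that $A$ restricted to $(\ker A)^{\perp}$ is norm-preserving, hence also injective on this subspace.

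Next I would handle surjectivity onto $\ran A$. Given $\vec{y} \in \ran A$, pick any $\vec{z} \in \C^n$ with $A\vec{z} = \vec{y}$, and decompose $\vec{z} = \vec{z}_0 + \vec{z}_1$ with $\vec{z}_0 \in \ker A$ and $\vec{z}_1 \in (\ker A)^{\perp}$. Then $\vec{y} = A\vec{z}_1$, so every element of $\ran A$ is the image under $A$ of a vector in $(\ker A)^{\perp}$. Combined with the previous step, this shows that $A$ maps $(\ker A)^{\perp}$ isometrically onto $\ran A$.

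There is no real obstacle here: the argument is essentially a bookkeeping consequence of Theorem \ref{Theorem:PI}. The only point that might warrant a sentence of commentary is the mild subtlety that the hypothesis $A^*A = P_{(\ker A)^{\perp}}$ is exactly what converts the inner-product identity $\inner{A^*A\vec{x},\vec{x}} = \inner{\vec{x},\vec{x}}$ into the desired norm equality; everything else is standard orthogonal decomposition.
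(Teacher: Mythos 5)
Your proof is correct, and the first half is exactly the paper's argument: both use Theorem \ref{Theorem:PI} to replace $A^*A$ by the projection onto $(\ker A)^{\perp}$ and then run the same one-line inner-product computation $\norm{A\vec{x}}^2 = \inner{A^*A\vec{x},\vec{x}} = \norm{\vec{x}}^2$. The two proofs part ways only on surjectivity. The paper counts dimensions: by rank--nullity, $\dim(\ker A)^{\perp} = n - \dim\ker A = \dim\ran A$, so the isometric (hence injective) restriction of $A$ to $(\ker A)^{\perp}$ must have image all of $\ran A$. You instead argue directly: given $\vec{y} = A\vec{z} \in \ran A$, write $\vec{z} = \vec{z}_0 + \vec{z}_1$ with $\vec{z}_0 \in \ker A$ and $\vec{z}_1 \in (\ker A)^{\perp}$, and observe $\vec{y} = A\vec{z}_1$. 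Your route is slightly more elementary and does not use finite-dimensionality at all for this step (it would survive verbatim in the infinite-dimensional setting, where the paper's dimension count would not), while the paper's count is a natural fit for the matrix-theoretic framing and needs no decomposition of a preimage. Either way the conclusion is complete; there is no gap in your argument.
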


\begin{proof}
If $A \in \M_n$ is a partial isometry, then $A^*A$ is the orthogonal projection
onto $(\ker A)^{\perp}$ (Theorem \ref{Theorem:PI}).  For $\vec{x} \in (\ker A)^{\perp}$, 
we have $\norm{A \vec{x}}^2 = \inner{A \vec{x}, A \vec{x}} = \inner{A^*A\vec{x}, \vec{x}} = 
\inner{ \vec{x}, \vec{x}} = \norm{ \vec{x}}^2$.
Thus, $A$ maps $(\ker A)^{\perp}$ isometrically into $\ran A$.  Since
\begin{equation*}
\dim \ker A + \dim (\ker A)^{\perp} = n = \dim \ker A + \dim \ran A,
\end{equation*}
we see that $\dim \ran A = \dim (\ker A)^{\perp}$, so
the image of $(\ker A)^{\perp}$ under $A$ is $\ran A$.
\end{proof}

\begin{Example}
For the partial isometries in Example \ref{Example:ThreePIs},
\begin{align*}
(\ker A)^{\perp} &= \vecspan\left\{ \twovector{ 1}{-\sqrt{3}} \right\} ,
& \ran A &= \vecspan\left\{ \twovector{0}{1} \right\} , \\
(\ker B)^{\perp} &= \vecspan\left\{ \twovector{ 0}{1} \right\} ,
& \ran B &= \vecspan\left\{ \twovector{1}{0} \right\},\\
(\ker C)^{\perp} &= \vecspan\left\{ \threevector{1}{0}{0},\threevector{0}{1}{0} \right\} ,
& \ran C &= \vecspan\left\{ \threevector{2}{2}{-1}, \threevector{-1}{2}{2} \right\}.
\end{align*}
\end{Example}

\section{Algebraic properties and factorizations}\label{Section:Algebraic}
In this section we survey a few algebraic results about partial isometries.
Section \ref{Section:SVD} concerns singular value decompositions of a partial isometry.
A characterization of partial isometries in terms of the Moore--Penrose pseudoinverse is
discussed in Section \ref{Section:Pseudo}.  The role of partial isometries in the polar decomposition of a square matrix is covered in 
Section \ref{Section:Polar}.  We wrap up with a study of products of partial isometries in Section \ref{Section:Products}.

\subsection{Singular value decomposition}\label{Section:SVD}
A \emph{singular value decomposition} (SVD) of $A \in \M_n$ is a factorization of the form
$A = U \Sigma V^*$, in which $U,V \in \M_n$ are unitary and 
\begin{equation*}
\Sigma = \diag(\sigma_1,\sigma_2,\ldots,\sigma_n)
\end{equation*}
with
\begin{equation*}
\sigma_1 \geq \sigma_2 \geq \cdots \sigma_n \geq 0;
\end{equation*}
see \cite[Thm.~14.1.4]{GarciaHorn}.  
Singular value decompositions always exist, but they are never unique (for example, replace $U,V$ with $-U, -V$, respectively).
For a general $A \in \M_{m \times n}$, a similar decomposition holds with $U \in \M_m$, $V \in \M_n$, and $\Sigma \in \M_{m \times n}$.

The nonnegative numbers $\sigma_i$ above are the \emph{singular values} of $A$; they are the square roots of the eigenvalues of
$A^*A$ and $AA^*$ since
\begin{equation*}
V^*(A^*A)V = \Sigma^2 = U^*(AA^*)U.
\end{equation*}
In particular, $\sigma_1 = \norm{A}$ and $\sigma_{r+1} = \cdots = \sigma_n = 0$, in which $\rank A = r$.
For $r=1,2,\ldots,n-1$, define
\begin{equation}\label{eq:Xr}
X_r = I_r \oplus 0_{n-r}.
\end{equation}
By convention, we let $\Sigma_0 = 0$ and $\Sigma_n = I$.  The following theorem characterizes
singular value decompositions of partial isometries.

\begin{Theorem}\label{Theorem:SVD}
For $A \in \M_{n}$ with $\rank A = r$, the following are equivalent.
\begin{enumerate}
\item $A \in \M_n$ is a partial isometry.
\item $A = U X_r V^*$ for some unitary $U,V \in \M_n$.
\item $U^*AV$ is a partial isometry for some unitary $U,V \in \M_n$.
\end{enumerate}
\end{Theorem}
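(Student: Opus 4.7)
The plan is to prove the cycle (a) $\Rightarrow$ (b) $\Rightarrow$ (c) $\Rightarrow$ (a), each implication being short once the correct earlier result is invoked.

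For (a) $\Rightarrow$ (b), I would start from any SVD $A = U \Sigma V^*$ with $\Sigma = \diag(\sigma_1,\ldots,\sigma_n)$ in decreasing order. By Theorem \ref{Theorem:PI}, $A^*A$ is an orthogonal projection, and $V^*(A^*A)V = \Sigma^2$ shows that $\Sigma^2$ is an orthogonal projection too. Its diagonal entries $\sigma_i^2$ therefore all lie in $\{0,1\}$, so each $\sigma_i \in \{0,1\}$. Combined with the decreasing convention and the fact that the number of nonzero singular values equals $\rank A = r$, this forces $\Sigma = X_r$, giving the desired factorization.

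For (b) $\Rightarrow$ (c), observe that if $A = U X_r V^*$ with $U,V$ unitary, then $U^* A V = X_r$. Since $X_r$ is Hermitian and idempotent, it is an orthogonal projection, and hence a partial isometry by Proposition \ref{Proposition:Basic}(b). For (c) $\Rightarrow$ (a), if $B = U^* A V$ is a partial isometry, then $A = U B V^*$ is a partial isometry by Proposition \ref{Proposition:Basic}(c), which says that partial isometries are closed under unitary multiplication on either side.

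There is no real obstacle here; the only substantive step is (a) $\Rightarrow$ (b), and its crux is simply recognizing that $A^*A$ being an orthogonal projection forces its eigenvalues (and therefore the singular values of $A$) into $\{0,1\}$. The rank count then pins down the multiplicities. The remaining two implications are immediate consequences of results already established earlier in the section.
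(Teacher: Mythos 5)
Your proof is correct and follows essentially the same route as the paper: extract an SVD, force the singular values into $\{0,1\}$, and use the rank to conclude $\Sigma = X_r$, with the remaining two implications handled by Proposition \ref{Proposition:Basic} exactly as the paper does. The only cosmetic difference is that the paper deduces $\sigma_i \in \{0,1\}$ directly from $AA^*A = A$ via $\Sigma^3 = \Sigma$, whereas you route through Theorem \ref{Theorem:PI} and the idempotence of $\Sigma^2$; both are equally valid.
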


\begin{proof}
(a) $\Rightarrow$ (b) 
If $A \in \M_n$ is a partial isometry with singular value decomposition
$A = U \Sigma V^*$, then $A = AA^*A$ implies 
$\Sigma^3 = \Sigma$.  Thus, the diagonal entries of $\Sigma$ belong to $\{0,1\}$.
Since $\rank A = \rank \Sigma$, we have $\Sigma = X_r$, in which $r = \rank A$.

\smallskip\noindent (b) $\Rightarrow$ (c)
Since $X_r$ is an orthogonal projection, this follows from Proposition \ref{Proposition:Basic}.

\smallskip\noindent (c) $\Rightarrow$ (a)
If $B = U^*AV$ is a partial isometry for some unitary $U,V \in \M_n$,
then $A = UBV^*$ is a partial isometry by Proposition \ref{Proposition:Basic}.
\end{proof}

\begin{Example}\label{Example:R2PI}
The rank-$2$ partial isometry
\begin{equation*}
\begin{bmatrix}
 1 & \0 & \0 \\[2pt]
 \0 & \frac{\sqrt{3}}{2} & \0 \\[2pt]
 \0 & \frac{1}{2} & \0 \\
\end{bmatrix}
\end{equation*}
has singular value decomposition
\begin{equation*}
\begin{bmatrix}
 \0 & 1 & \0 \\[2pt]
 \frac{\sqrt{3}}{2} & \0 & -\frac{1}{2} \\[2pt]
 \frac{1}{2} & \0 & \frac{\sqrt{3}}{2} \\
\end{bmatrix} 
\begin{bmatrix}
 1 & \0 & \0 \\
 \0 & 1 & \0 \\
 \0 & \0 & 0 \\
\end{bmatrix} 
\begin{bmatrix}
 \0 & 1 & \0 \\
 1 & \0 & \0 \\
 \0 & \0 & 1 \\
\end{bmatrix}.
\end{equation*}
\end{Example}

The characterization of partial isometries in terms of the singular value decomposition
leads to a standard presentation of a partial isometry, up to unitary similarity.

\begin{Theorem}\label{Theorem:N}
For $A \in \M_{n}$ with $\rank A = r$ the following are equivalent.
\begin{enumerate}
\item $A$ is a partial isometry. 
\item $A \cong [N\,\,0]$, in which $N \in \M_{n\times r}$ has orthonormal columns.
\item $A \cong \minimatrix{B}{0}{C}{0}$, in which $B \in \M_r$, $C\in \M_{(n-r)\times r}$,
and $B^*B + C^*C = I_r$.
\end{enumerate}
\end{Theorem}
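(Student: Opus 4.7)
The plan is to show (a) $\Leftrightarrow$ (b) and (b) $\Leftrightarrow$ (c). The equivalence (b) $\Leftrightarrow$ (c) is essentially bookkeeping: partition $N \in \M_{n\times r}$ as $N = \twovector{B}{C}$ with $B \in \M_r$ and $C \in \M_{(n-r)\times r}$, so that $[N\,\,0] = \minimatrix{B}{0}{C}{0}$, and note that the orthonormality condition $N^*N = I_r$ reads precisely $B^*B + C^*C = I_r$. The direction (b) $\Rightarrow$ (a) then comes for free, since Example \ref{Example:N0} shows that $[N\,\,0]$ is a partial isometry whenever $N$ has orthonormal columns, and Proposition \ref{Proposition:Basic}(d) transfers this along unitary similarity.

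The substantive direction is (a) $\Rightarrow$ (b). My approach is to build a unitary $W$ from an orthonormal basis of $\C^n$ adapted to the orthogonal decomposition $\C^n = (\ker A)^\perp \oplus \ker A$: pick orthonormal bases $\vec{v}_1,\ldots,\vec{v}_r$ of $(\ker A)^\perp$ and $\vec{v}_{r+1},\ldots,\vec{v}_n$ of $\ker A$, and set $W = [\vec{v}_1\,\cdots\,\vec{v}_n]$. The $j$-th column of $W^*AW$ equals $W^*A\vec{v}_j$, which vanishes for $j > r$; hence $W^*AW = [N\,\,0]$ for some $N \in \M_{n \times r}$ whose columns are $W^*A\vec{v}_1,\ldots,W^*A\vec{v}_r$. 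Since $W^*$ is unitary, these columns are orthonormal if and only if $A\vec{v}_1,\ldots,A\vec{v}_r$ are, and Theorem \ref{Theorem:PI} (which identifies $A^*A$ with the orthogonal projection onto $(\ker A)^\perp$) gives
\begin{equation*}
\inner{A\vec{v}_i, A\vec{v}_j} = \inner{A^*A\vec{v}_i, \vec{v}_j} = \inner{\vec{v}_i, \vec{v}_j} = \delta_{ij},
\end{equation*}
as desired.

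No serious obstacle arises here. The one temptation to avoid is quoting Theorem \ref{Theorem:SVD} directly: it yields a factorization $A = UX_rV^*$ with different unitaries on the left and right, which is not a unitary similarity and so does not straightforwardly produce the required form. Working with an adapted orthonormal basis sidesteps this issue entirely.
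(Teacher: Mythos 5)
Your proposal is correct, but the substantive direction (a) $\Rightarrow$ (b) is handled differently from the paper, and your closing remark slightly misjudges the paper's route. The paper does invoke Theorem \ref{Theorem:SVD}: writing $A = U X_r V^*$ and conjugating by $V$ gives $V^*AV = (V^*U)X_r = [N\,\,0]$, where $N$ consists of the first $r$ (necessarily orthonormal) columns of the unitary matrix $V^*U$ --- so the ``temptation'' you warn against is in fact resolved by a one-line trick and is precisely the paper's argument. Your alternative is to build the unitary $W$ from an orthonormal basis adapted to $\C^n = (\ker A)^\perp \oplus \ker A$ and to verify orthonormality of the columns $W^*A\vec{v}_1,\ldots,W^*A\vec{v}_r$ via Theorem \ref{Theorem:PI}, using $\inner{A\vec{v}_i, A\vec{v}_j} = \inner{A^*A\vec{v}_i,\vec{v}_j} = \delta_{ij}$; this is correct and has the merit of being self-contained, relying only on the projection characterization of $A^*A$ rather than on the SVD machinery, at the cost of a somewhat longer construction. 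The remaining implications match the paper: your (b) $\Leftrightarrow$ (c) is the block-partition computation \eqref{eq:CompAbovePI}, and your (b) $\Rightarrow$ (a) via Example \ref{Example:N0} and Proposition \ref{Proposition:Basic} is the same content as the paper's (c) $\Rightarrow$ (a).
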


\begin{proof}
(a) $\Rightarrow$ (b) Let $A \in \M_n$ be a partial isometry.
Theorem \ref{Theorem:SVD} ensures that 
$A= UX_rV^*$ for some unitary $U,V \in \M_n$.
Thus, $V^*AV = (V^*U)X_r = [N \,\,0]$, in which $N \in \M_{n \times r}$ is comprised
of the first $r$ columns (necessarily orthonormal) of the unitary matrix $V^*U$.  

\smallskip\noindent(b) $\Rightarrow$ (c)
Suppose that $A \cong [N\,\,0]$, in which $N \in \M_{n\times r}$ has orthonormal columns.
Then Proposition \ref{Proposition:Basic} ensures that 
\begin{equation*}\qquad\qquad\qquad
[N\,\,0] =  \minimatrix{B}{0}{C}{0},\qquad B \in \M_r, \,C\in \M_{(n-r)\times r},
\end{equation*}
is a partial isometry.  Since $N^*N = I_r$,
\begin{equation}\label{eq:CompAbovePI}
\minimatrix{I_r}{0}{0}{0_{n-r}} = \twovector{N^*}{0}[N\,\,0]
= \minimatrix{B^*}{C^*}{0}{0} \minimatrix{B}{0}{C}{0}
= \minimatrix{B^*B+C^*C}{0}{0}{0}
\end{equation}
and hence $B^*B+C^*C = I_r$.

\smallskip\noindent(c) $\Rightarrow$ (a)
The computation \eqref{eq:CompAbovePI} and Theorem \ref{Theorem:PI}
ensure that $A$ is unitarily similar to a partial isometry.
\end{proof}

\begin{Example}
The matrix
\begin{equation*}
A = \frac{1}{9}
\begin{bmatrix}
 8 & 2 & 2 \\
 2 & 5 & -4 \\
 -2 & 4 & -5 \\
\end{bmatrix}
\end{equation*}
is a partial isometry.  Indeed, $A = U X_2 V^*$, in which
\begin{equation*}
U = 
\begin{bmatrix}
 \frac{2}{\sqrt{5}} & \frac{2}{3 \sqrt{5}} & \frac{1}{3} \\[4pt]
 0 & \frac{\sqrt{5}}{3} & -\frac{2}{3} \\[3pt]
 -\frac{1}{\sqrt{5}} & \frac{4}{3 \sqrt{5}} & \frac{2}{3} \\
\end{bmatrix}
\qquad \text{and} \qquad
V = 
\begin{bmatrix}
 \frac{2}{\sqrt{5}} & \frac{2}{3 \sqrt{5}} & -\frac{1}{3} \\[4pt]
 0 & \frac{\sqrt{5}}{3} & \frac{2}{3} \\[3pt]
 \frac{1}{\sqrt{5}} & -\frac{4}{3 \sqrt{5}} & \frac{2}{3} \\
\end{bmatrix}
\end{equation*}
are unitary.  Following the proof of (a) $\Rightarrow$ (b) in Theorem \ref{Theorem:N} we find
\begin{equation*}
V^*AV = 
\begin{bmatrix}
 \frac{3}{5} & \frac{8}{15} & \0 \\[4pt]
 \frac{8}{15} & \frac{13}{45} & \0 \\[4pt]
 -\frac{4}{3 \sqrt{5}} & \frac{16}{9 \sqrt{5}} & \0 \\
\end{bmatrix} = [N\,\,0],
\end{equation*}
in which $N$ has orthonormal columns.  Moreover, $B^*B + C^*C = I_2$, in which
\begin{equation*}
B = 
\begin{bmatrix}
 \frac{3}{5} & \frac{8}{15} \\[4pt]
 \frac{8}{15} & \frac{13}{45} \\
\end{bmatrix}
\qquad \text{and} \qquad
C =  \left[-\tfrac{4}{3 \sqrt{5}} \,\,\,\, \tfrac{16}{9 \sqrt{5}}  \right],
\end{equation*}
as suggested by Theorem \ref{Theorem:N}c.
\end{Example}

\subsection{Pseudoinverses}\label{Section:Pseudo}
Let $A\in \M_n$ with $\rank A = r$ and let $\sigma_1 \geq \sigma_2 \geq \cdots \geq \sigma_r$
be the nonzero singular values of $A$.  Let $A = U \Sigma V^*$ be a singular
value decomposition of $A$, in which
\begin{equation*}
\Sigma = \diag(\sigma_1,\sigma_2,\ldots,\sigma_r,0,0,\ldots,0).
\end{equation*}
Then
\begin{equation*}
\Sigma^+ = \diag(\sigma_1^{-1},\sigma_2^{-1},\ldots,\sigma_r^{-1},0,0,\ldots,0)
\end{equation*}
satisfies 
\begin{equation*}
\Sigma \Sigma^+ = \Sigma^+ \Sigma = X_r,
\end{equation*}
in which $X_r = I_r \oplus 0_{n-r}$, as defined in \eqref{eq:Xr}.
The \emph{pseudoinverse} of $A$ is 
\begin{equation*}
A^+ = V\Sigma^+ U^*,
\end{equation*}
which satisfies
\begin{enumerate}
\item $A A^{+} A = A$,
\item $A^{+} A A^{+} = A^{+}$,
\item $(A A^{+})^* = A A^{+}$, and
\item $(A^{+} A)^* = A^{+} A$.
\end{enumerate}
In particular, $A^{+} = A^{-1}$ if $A$ is invertible.  
The matrix $A^+$ is uniquely determined by the conditions (a)-(d) above and is often alternately
referred to as the \emph{Moore--Penrose generalized inverse} of $A$.
The pseudoinverse satisfies $(AB)^+ = B^+ A^+$ for $A,B \in \M_n$.

\begin{Theorem}\label{Theorem:MP}
$A \in \M_n$ is a partial isometry if and only if $A^+ = A^*$.
\end{Theorem}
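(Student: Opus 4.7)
The plan is to leverage the uniqueness of the pseudoinverse: since $A^+$ is characterized as the \emph{unique} matrix satisfying the four Moore--Penrose conditions (a)--(d) listed just before the theorem, it suffices, for the forward direction, to show that $A^*$ itself satisfies all four of those conditions whenever $A$ is a partial isometry. The reverse direction will be essentially immediate from property (a) of the pseudoinverse.

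For the forward direction, assume $AA^*A = A$. Condition (a) with $A^+$ replaced by $A^*$ is precisely $AA^*A=A$, which is the defining identity. Condition (b), namely $A^*AA^* = A^*$, is the adjoint of (a). Conditions (c) and (d) reduce to the observations that $AA^*$ and $A^*A$ are self-adjoint, which is automatic. By the uniqueness of the matrix satisfying (a)--(d), we conclude $A^+ = A^*$.

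For the reverse direction, assume $A^+ = A^*$. Substituting into the Moore--Penrose identity $AA^+A = A$ immediately yields $AA^*A = A$, so $A$ is a partial isometry by Definition \ref{Definition:PI}.

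I do not expect a serious obstacle here; the only thing to be mindful of is to state clearly that we are invoking the uniqueness clause of the Moore--Penrose pseudoinverse rather than recomputing $A^+$ from an SVD (though a one-line SVD argument via Theorem \ref{Theorem:SVD}, using that $X_r^+ = X_r$, would give an equally short alternative proof if preferred).
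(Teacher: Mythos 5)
Your proof is correct, but it takes a different route from the paper. The paper works directly with the definition of $A^+$ via the singular value decomposition: for the forward direction it invokes Theorem \ref{Theorem:SVD} to get $\Sigma = X_r$, so that $A^+ = V X_r U^* = A^*$, and for the converse it compares $V\Sigma^+ U^* = V\Sigma U^*$ to force $\Sigma^+ = \Sigma$, hence all nonzero singular values equal $1$. You instead bypass the SVD entirely and appeal to the uniqueness clause of the Moore--Penrose conditions: verifying that $A^*$ satisfies (a)--(d) when $AA^*A = A$ is immediate (condition (b) is the adjoint of (a), and (c), (d) are automatic self-adjointness statements), and your converse is a one-line substitution into $AA^+A = A$, which is arguably cleaner than the paper's. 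The trade-off is that your argument leans on the uniqueness of the pseudoinverse, which the paper asserts but does not prove, whereas the paper's argument needs only the explicit formula $A^+ = V\Sigma^+U^*$ together with the SVD characterization of partial isometries it has already established. Both are complete; yours is the more algebraic and self-contained modulo the uniqueness fact, the paper's stays entirely within the SVD framework in which it defined $A^+$.
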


\begin{proof}
Let $A \in \M_n$ have singular value decomposition $A = U \Sigma V^*$.  
If $A$ is a partial isometry, then $\Sigma = X_r$, in which $\rank A=r$ (Theorem \ref{Theorem:SVD}).
Thus, $A^+ = V \Sigma^+ U^* = V X_r U^* = A^*$.
Conversely, suppose that $A^+ = A^*$.  Then
$V\Sigma^+ U^* = V\Sigma U^*$ and hence $\Sigma^+ = \Sigma$.
The definition of $\Sigma^+$ ensures that 
each nonzero singular value of $A$ is $1$ and hence $A = UX_rV^*$
is a partial isometry (Theorem \ref{Theorem:SVD}).
\end{proof}

\subsection{Polar decomposition}\label{Section:Polar}
The singular value decomposition leads to a matrix analogue of the polar
form of a complex number $z$, in which partial isometries play a critical role.
We first consider a closely-related factorization of partial isometries.

\begin{Theorem}\label{Theorem:Polar}
For $A \in \M_{n}$ the following are equivalent.
\begin{enumerate}
\item $A$ is a partial isometry.
\item $A = W P$, in which $P$ is an orthogonal projection and $W$ is unitary. 
\item $A = Q W$, in which $Q$ is an orthogonal projection and $W$ is unitary.
\end{enumerate}
\end{Theorem}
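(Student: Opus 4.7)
The plan is to leverage the singular value decomposition characterization from Theorem \ref{Theorem:SVD}, which already pins down a partial isometry as $A = U X_r V^*$ for unitary $U,V$, where $r = \rank A$. From this normal form, both factorizations (b) and (c) should fall out with a single rewriting trick.

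First I would handle (a) $\Rightarrow$ (b) and (a) $\Rightarrow$ (c) simultaneously. Starting from $A = U X_r V^*$, I would split the unitary on one side or the other:
\begin{equation*}
A = U X_r V^* = (UV^*)(V X_r V^*) = (U X_r U^*)(UV^*).
\end{equation*}
The outer factor $UV^*$ is unitary. The factor $V X_r V^*$ is unitarily similar to the orthogonal projection $X_r = I_r \oplus 0_{n-r}$, hence itself an orthogonal projection, and likewise for $U X_r U^*$. Setting $W = UV^*$, $P = V X_r V^*$, $Q = U X_r U^*$ yields both $A = WP$ and $A = QW$.

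For the converse directions (b) $\Rightarrow$ (a) and (c) $\Rightarrow$ (a), the cleanest route is a direct verification of the defining identity $AA^*A = A$, exploiting that $P = P^* = P^2$ (and likewise $Q$) while $W$ is unitary. For instance, if $A = WP$, then
\begin{equation*}
AA^*A = WP(WP)^*WP = WP\,P^*W^*W\,P = W P^3 = WP = A,
\end{equation*}
and the computation for $A = QW$ is entirely analogous. Alternatively, I could invoke Proposition \ref{Proposition:Basic}: orthogonal projections are partial isometries by (b) of that proposition, and multiplying a partial isometry on the left or right by a unitary preserves the class by (c), so $WP$ and $QW$ are automatically partial isometries.

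I do not expect any real obstacle; the only subtle point worth flagging is that in the forward direction one must remember to conjugate $X_r$ by the correct unitary so that the resulting projection lives on the correct side (the initial-space projection for (b), the final-space projection for (c)), which matches the fact that the canonical choices turn out to be $P = A^*A$ and $Q = AA^*$ once a specific $W$ is fixed.
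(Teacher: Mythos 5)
Your proposal is correct and follows essentially the same route as the paper: the forward direction extracts both factorizations from the normal form $A = U X_r V^*$ of Theorem \ref{Theorem:SVD} (the paper does (a) $\Rightarrow$ (b) this way and then conjugates to get (c), while you produce $P = V X_r V^*$ and $Q = U X_r U^*$ directly, which is only a cosmetic difference), and the converse is the same direct verification of $AA^*A = A$ using $P = P^* = P^2$ and unitarity of $W$, or equivalently an appeal to Proposition \ref{Proposition:Basic}.
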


\begin{proof}
(a) $\Rightarrow$ (b)
Let $A \in \M_n$ be a partial isometry with singular value decomposition
$A = U X_r V^*$ (Theorem \ref{Theorem:SVD}).  Then 
$A = WP$, in which $W = UV^*$ is unitary and
$P = VX_r V^*$ is an orthogonal projection.

\smallskip\noindent(b) $\Rightarrow$ (c)
Let $A = WP$, in which $W$ is unitary $P$ is an orthogonal projection.  Then
$A = QW$, in which $Q= WPW^*$ is an orthogonal projection.

\smallskip\noindent(c) $\Rightarrow$ (a)
If $A = QW$, in which $Q$ is an orthogonal projection and $W$ is unitary, then
$AA^*A = (QW)(QW)^*(QW) = QWW^*Q^*QW = Q^3W = QW = A$
since $Q$ is Hermitian and idempotent.
\end{proof}

Theorem \ref{Theorem:Polar} permits one to extend a non-unitary
partial isometry to a unitary matrix.  If $A$ is a partial isometry and $A = WP$, in which $W$ is 
unitary and $P = A^*A$ is an orthogonal projection, then $W$ agrees with
$A$ on the initial space $(\ker A)^{\perp}$ and acts on $\ker A$ such that 
$\| W \vec{x} \| = \| \vec{x}\|$ for all $\vec{x} \in \C^n$.  We regard $W$ as a unitary extension of $A$.

\begin{Example}
The rank-$2$ partial isometry 
from Example \ref{Example:R2PI} factors as
\begin{equation*}
\underbrace{
\begin{bmatrix}
 1 & \0 & \0 \\[2pt]
 \0 & \frac{\sqrt{3}}{2} & \0 \\[3pt]
 \0 & \frac{1}{2} & \0 \\
\end{bmatrix}
}_A
= 
\underbrace{
\begin{bmatrix}
 1 & \0 & \0 \\[2pt]
 \0 & \frac{\sqrt{3}}{2} & -\frac{1}{2} \\[3pt]
 \0 & \frac{1}{2} &  \frac{\sqrt{3}}{2}\\
\end{bmatrix}
}_W
\underbrace{
\begin{bmatrix}
 1 & \0 & \0 \\
 \0 & 1 & \0 \\
 \0 & \0 & 0
\end{bmatrix}
}_P
=
\underbrace{
\begin{bmatrix}
 1 & \0 & \0 \\
 \0 & \frac{3}{4} & \frac{\sqrt{3}}{4} \\[3pt]
 \0 & \frac{\sqrt{3}}{4} & \frac{1}{4} \\
\end{bmatrix}
}_Q
\underbrace{
\begin{bmatrix}
 1 & \0 & \0 \\[2pt]
 \0 & \frac{\sqrt{3}}{2} & -\frac{1}{2} \\[3pt]
 \0 & \frac{1}{2} &  \frac{\sqrt{3}}{2}\\
\end{bmatrix}
}_W,
\end{equation*}
in which $W$ is unitary and $P,Q$ are orthogonal projections.
\end{Example}

For each $A \in \M_n$, the positive semidefinite matrix $A^*A$ has a unique
positive semidefinite square root $(A^*A)^{1/2}$, usually denoted $|A|$.  In fact, $|A| = p(A^*A)$
for any polynomial $p$ with the property that $p(\lambda) = \lambda^{1/2}$
for each $\lambda \in \sigma(A^*A) \subseteq [0,\infty)$.

\begin{Theorem}
If $A \in \M_n$, then there is a unique partial isometry $E \in \M_n$ 
and positive semidefinite $R \in \M_n$ 
so that $A = ER$ and $\ker E= \ker R$.  In fact, $R = |A|$.
\end{Theorem}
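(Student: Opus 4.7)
The plan is to obtain existence directly from the singular value decomposition (already at our disposal via Theorem \ref{Theorem:SVD}) and then prove uniqueness by squaring to identify $R$, after which $E$ is forced.

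For existence, I would start with an SVD $A = U \Sigma V^*$, where $\Sigma = \diag(\sigma_1,\ldots,\sigma_r,0,\ldots,0)$ and $r = \rank A$. Set $R = V \Sigma V^*$, which is visibly positive semidefinite and satisfies $R^2 = V \Sigma^2 V^* = A^*A$, so $R = |A|$. Set $E = U X_r V^*$; by Theorem \ref{Theorem:SVD} this is a partial isometry, and since $X_r \Sigma = \Sigma$ we get $ER = U X_r V^* V \Sigma V^* = U \Sigma V^* = A$. Finally, $\ker E$ and $\ker R$ both equal $V \cdot \vecspan\{\vec{e}_{r+1},\ldots,\vec{e}_n\}$, so the kernel condition holds.

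For uniqueness, suppose $A = ER$ with $E$ a partial isometry, $R$ positive semidefinite, and $\ker E = \ker R$. By Theorem \ref{Theorem:PI}, $E^*E$ is the orthogonal projection onto $(\ker E)^{\perp}$. Since $R$ is Hermitian, $(\ker R)^{\perp} = \ran R$, so the kernel hypothesis forces $(\ker E)^{\perp} = \ran R$. Consequently $E^*E$ fixes every vector in $\ran R$, which gives the key identity $E^*E R = R$. Hence
\begin{equation*}
A^*A = R E^*E R = R^2,
\end{equation*}
and uniqueness of the positive semidefinite square root yields $R = |A|$. With $R$ pinned down, $E$ is determined on $\ran R$ by $E(R\vec{x}) = A\vec{x}$ and on $\ker R = \ker E$ by $E = 0$; since $\C^n = \ker R \oplus \ran R$ by Hermiticity of $R$, this determines $E$ completely.

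The main obstacle is conceptual rather than computational: one must see why the kernel condition $\ker E = \ker R$ is exactly what converts $E^*E$ (a projection onto the potentially smaller space $(\ker E)^{\perp}$) into a left identity for $R$. Without this hypothesis, $E$ could be altered freely on $\ker R$ (subject to remaining a partial isometry) without disturbing the product $ER$, destroying uniqueness; and on the existence side, the kernel condition is what distinguishes the ``canonical'' choice $E = U X_r V^*$ from the various unitary extensions produced in Theorem \ref{Theorem:Polar}.
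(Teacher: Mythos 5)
Your proposal is correct and follows essentially the same route as the paper: existence via the SVD with $E = UX_rV^*$ and $R = V\Sigma V^* = |A|$, and uniqueness by using the fact that $E^*E$ is the projection onto $(\ker E)^{\perp} = \ran R$ to get $A^*A = R^2$, invoking uniqueness of the positive semidefinite square root, and then noting that $E$ is forced on $\ran R$ by the product and vanishes on the common kernel. No gaps to report.
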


\begin{proof}
Let $A \in \M_n$ and $r = \rank A$.
Write a singular value decomposition $A = U\Sigma V^*$ and observe that
$A^*A = V\Sigma^2 V^*$ and hence $|A| = V\Sigma V^*$.  Then
$A = E R$, in which $E = U X_r V^*$ is a partial isometry
and $R =|A|$.  Moreover, $\ker E = \ker R$ by construction.
This establishes the existence of the desired factorization.

Now suppose that $A = FS$, in which $F \in \M_n$ is a partial isometry, $S \in \M_n$ is positive semidefinite,
and $\ker F = \ker S$.  Then 
$A^*A = S^*F^*FS = S^*S$
since $F^*F$ is the orthogonal projection onto $(\ker F)^{\perp} = \ran S$.  The uniqueness of the positive semidefinite
square root of a positive semidefinite matrix ensures that $S = |A|$.  In particular, $\ker F = \ker |A| = \ker E$.
Let $\vec{y} \in (\ker F)^{\perp} = \ran |A|$.  Then $\vec{y} = |A| \vec{x}$ for some $\vec{x} \in \C^n$
and hence $F\vec{y} = F|A|\vec{x} = A\vec{x} = E|A|\vec{x} = E \vec{y}$.  Thus, $E = F$.
\end{proof}

\subsection{Products of partial isometries}\label{Section:Products}
The set of partial isometries is not closed under multiplication.  For example,
\begin{equation*}
\minimatrix{0}{1}{0}{0}
\begin{bmatrix}
0 & \frac{1}{\sqrt{2}} \\[4pt]
0 & \frac{1}{\sqrt{2}} 
\end{bmatrix}
= \minimatrix{0}{\frac{1}{\sqrt{2}}}{0}{0}
\end{equation*}
is the product of partial isometries but is not a partial isometry.
The main result of this section (Theorem \ref{Theorem:Product}) is a criterion
for when the product of two partial isometries is a partial isometry.
The proof requires two preparatory lemmas.

\begin{Lemma}\label{Lemma:ContractiveIdempotent}
If $A \in \M_n$ is idempotent and $\norm{A} \leq 1$, then $A$ is an orthogonal projection.
\end{Lemma}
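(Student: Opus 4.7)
My plan is to show that $\ker A$ and $\ran A$ are orthogonal complements; together with idempotency this will force $A$ to be the orthogonal projection onto $\ran A$.

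First I would recall that if $A^2 = A$, then $\C^n = \ker A \oplus \ran A$ as a (not necessarily orthogonal) direct sum, and every $\vec{y} \in \ran A$ is a fixed point of $A$ (write $\vec{y} = A\vec{z}$ and apply $A$). To check the sum is orthogonal, I would pick $\vec{v}_0 \in \ker A$ and $\vec{v}_1 \in \ran A$ and consider the family of vectors $\vec{v}_1 + t\vec{v}_0$ for $t \in \C$. Since $A\vec{v}_0 = \vec{0}$ and $A\vec{v}_1 = \vec{v}_1$, applying $A$ gives $A(\vec{v}_1 + t\vec{v}_0) = \vec{v}_1$, so contractivity yields
\begin{equation*}
\norm{\vec{v}_1}^2 = \norm{A(\vec{v}_1 + t\vec{v}_0)}^2 \le \norm{\vec{v}_1 + t\vec{v}_0}^2 = \norm{\vec{v}_1}^2 + 2\Re\bigl(t \inner{\vec{v}_0, \vec{v}_1}\bigr) + |t|^2\norm{\vec{v}_0}^2.
\end{equation*}

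Next I would exploit the freedom in $t$. If $\inner{\vec{v}_0, \vec{v}_1} \ne 0$, choose $t = -s\,\overline{\inner{\vec{v}_0, \vec{v}_1}}/|\inner{\vec{v}_0, \vec{v}_1}|$ with $s > 0$ small; then $2\Re(t\inner{\vec{v}_0, \vec{v}_1}) = -2s|\inner{\vec{v}_0, \vec{v}_1}|$ dominates $|t|^2\norm{\vec{v}_0}^2 = s^2\norm{\vec{v}_0}^2$, contradicting the displayed inequality. Hence $\inner{\vec{v}_0, \vec{v}_1} = 0$ for every $\vec{v}_0 \in \ker A$ and $\vec{v}_1 \in \ran A$.

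Finally, orthogonality of the direct sum $\C^n = \ker A \oplus \ran A$ means $\ran A = (\ker A)^{\perp}$, so $A$ fixes $\ran A$ and annihilates its orthogonal complement: it is exactly the orthogonal projection onto $\ran A$. I do not expect a real obstacle here; the only delicate point is keeping the complex scalar $t$ honest (as opposed to real), so that the variational argument produces the required contradiction in full generality.
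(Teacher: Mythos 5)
Your proof is correct, but it takes a genuinely different route from the paper. The paper's argument is a single algebraic computation: it expands $\norm{A\vec{x} - A^*A\vec{x}}^2$, uses idempotency and $\norm{A}\leq 1$ to bound it by $\norm{A\vec{x}-A\vec{x}}^2 = 0$, and concludes $A = A^*A$, which is Hermitian, hence an orthogonal projection. You instead argue geometrically: from $A^2=A$ you get the (algebraic) direct sum $\C^n = \ker A \oplus \ran A$ with $A$ fixing $\ran A$, and then the variational step --- perturbing $\vec{v}_1 \in \ran A$ by $t\vec{v}_0$ with $\vec{v}_0 \in \ker A$ and letting contractivity force $\inner{\vec{v}_0,\vec{v}_1}=0$ via a small complex $t$ aligned against $\inner{\vec{v}_0,\vec{v}_1}$ --- shows the sum is orthogonal, so $A$ is the orthogonal projection onto $\ran A$. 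Your choice of $t$ is handled correctly (the linear term $-2s|\inner{\vec{v}_0,\vec{v}_1}|$ dominates the quadratic term for small $s>0$), and the final identification $\ran A = (\ker A)^{\perp}$ follows from orthogonality plus the direct-sum decomposition. What each approach buys: the paper's identity is shorter and requires no decomposition or case analysis, while yours makes the geometric content explicit --- it exhibits $A$ concretely as the orthogonal projection onto its range and explains \emph{why} contractivity rules out oblique idempotents, at the modest cost of the perturbation argument and the care you rightly note about keeping $t$ complex.
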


\begin{proof}
Suppose that $A \in \M_n$ is idempotent and $\norm{A} \leq 1$.  For $\vec{x} \in \C^n$,
\begin{align*}
\norm{A \vec{x} - A^*A \vec{x}}^2
&=\norm{ A \vec{x}}^2 + \norm{ A^*A \vec{x}}^2 - 2 \Re\inner{A \vec{x} , A^*A\vec{x}} \\
&\leq\norm{ A \vec{x}}^2 + \norm{A^*}^2\norm{ A \vec{x}}^2 - 2 \Re\inner{A^2 \vec{x} , A\vec{x}} \\
&\leq \norm{A \vec{x}}^2 + \norm{ A\vec{x}}^2 - 2 \Re \inner{A \vec{x}, A\vec{x}} \\
&= \norm{A \vec{x} - A\vec{x}}^2 \\
&=0.
\end{align*}
Thus, $A = A^*A$ is Hermitian and hence $A$ is an orthogonal projection.
\end{proof}

\begin{Lemma}\label{Lemma:ProjectionProduct}
Let $P, Q \in \M_n$ be orthogonal projections. Then $P Q$ is a partial isometry 
if and only if it is an orthogonal projection. 
\end{Lemma}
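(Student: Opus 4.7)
The backward direction is immediate from Proposition \ref{Proposition:Basic}(b): every orthogonal projection is automatically a partial isometry.

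For the forward direction, the plan is to invoke Lemma \ref{Lemma:ContractiveIdempotent}. That reduces the problem to verifying two things about $PQ$: that it is idempotent and that it has operator norm at most $1$. The norm bound is free, since $\norm{PQ} \leq \norm{P}\norm{Q} \leq 1$ (each factor is either $0$ or a nonzero partial isometry, hence of norm $1$ by Corollary \ref{Corollary:Norm}).

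The real content is idempotence. Assuming $PQ$ is a partial isometry, expand the defining identity $(PQ)(PQ)^*(PQ)=PQ$ using $P^*=P$ and $Q^*=Q$:
\begin{equation*}
PQ = (PQ)(Q^*P^*)(PQ) = PQ\cdot QP\cdot PQ = P(Q^2)(P^2)Q = PQPQ,
\end{equation*}
where the last equality uses $P^2=P$ and $Q^2=Q$. Thus $(PQ)^2 = PQ$, so $PQ$ is idempotent.

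Combining these two observations, Lemma \ref{Lemma:ContractiveIdempotent} applies and yields that $PQ$ is an orthogonal projection, completing the proof. I do not anticipate any real obstacle here; the main subtlety is noticing that the partial isometry identity collapses so cleanly once one uses that both $P$ and $Q$ are Hermitian idempotents, which is precisely what makes Lemma \ref{Lemma:ContractiveIdempotent} the right hammer.
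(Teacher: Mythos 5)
Your proof is correct and follows essentially the same route as the paper: the reverse direction via Proposition \ref{Proposition:Basic}, and the forward direction by computing $(PQ)(QP)(PQ)=PQPQ$ to get idempotence, bounding $\norm{PQ}\leq 1$, and invoking Lemma \ref{Lemma:ContractiveIdempotent}. No gaps.
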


\begin{proof}
Let $P, Q \in \M_n$ be orthogonal projections.
If $A = PQ$ is a partial isometry, then $\norm{A} = \norm{PQ} \leq \norm{P}\norm{Q} \leq 1$ and
$A  = A A^* A
 = (P Q) (Q P) (P Q)
 = (P Q) (PQ)
 = A^2$,
so Lemma \ref{Lemma:ContractiveIdempotent} ensures that $A$ is an orthogonal projection. 
Conversely, if $A = PQ$ is an orthogonal projection, then it is a partial isometry.
\end{proof}

With the preceding two lemmas, we can prove the following result \cite[Thm.~5]{MR0227194}.

\begin{Theorem}\label{Theorem:Product}
Let $A, B \in \M_{n}$ be partial isometries. Then $A B$ is a partial isometry
if and only if $A^*A$ and $ B B^*$ commute. 
\end{Theorem}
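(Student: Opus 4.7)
The strategy is to set $P = A^*A$ and $Q = BB^*$, which by Theorem \ref{Theorem:PI} are both orthogonal projections, and then to reduce the question of whether $AB$ is a partial isometry to the question of whether $P$ and $Q$ commute. The key identities I will use repeatedly are $AP = A$ and $QB = B$, both of which are just restatements of the defining identities $AA^*A = A$ and $BB^*B = B$.

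First, I would expand the partial isometry condition for $AB$:
\[
(AB)(AB)^*(AB) = A(BB^*)(A^*A)B = AQPB,
\]
so $AB$ is a partial isometry if and only if $AQPB = AB$.

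The sufficiency direction is then immediate: if $PQ = QP$, then
\[
AQPB = APQB = (AP)(QB) = AB.
\]

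For the converse, the one piece of cleverness I expect to need is the following: starting from $AQPB = AB$, multiply on the left by $A^*$ and on the right by $B^*$. The left side collapses to $(A^*A)QP(BB^*) = PQPQ$, and the right side becomes $(A^*A)(BB^*) = PQ$. Hence $(PQ)^2 = PQ$, so $PQ$ is idempotent. Since $\|PQ\| \le \|P\|\|Q\| \le 1$, Lemma \ref{Lemma:ContractiveIdempotent} forces $PQ$ to be an orthogonal projection, and therefore $PQ = (PQ)^* = Q^*P^* = QP$.

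The main ``obstacle'' (really just a small trick) is recognizing that sandwiching the identity $AQPB = AB$ between $A^*$ and $B^*$ is precisely what is needed to extract an idempotency statement about $PQ$ from a partial isometry statement about $AB$. After that, Lemma \ref{Lemma:ContractiveIdempotent} does the rest; alternatively one could instead verify $(PQ)(PQ)^*(PQ) = PQPQ = PQ$ directly and then invoke Lemma \ref{Lemma:ProjectionProduct}, but the contractive idempotent route is the shortest.
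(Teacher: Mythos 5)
Your proof is correct, and it takes a somewhat different route from the paper's. The paper first invokes its polar-type factorization (Theorem \ref{Theorem:Polar}) to write $A = UP$ and $B = QV$ with $U,V$ unitary, $P = A^*A$, $Q = BB^*$, so that $AB = U(PQ)V$; by Proposition \ref{Proposition:Basic}, $AB$ is a partial isometry exactly when $PQ$ is, and Lemma \ref{Lemma:ProjectionProduct} (itself built on Lemma \ref{Lemma:ContractiveIdempotent}) then identifies $PQ$ as an orthogonal projection, giving $PQ = QP$. You bypass the polar decomposition entirely: expanding $(AB)(AB)^*(AB) = AQPB$ and sandwiching the identity $AQPB = AB$ between $A^*$ and $B^*$ yields $(PQ)^2 = PQ$ directly, after which Lemma \ref{Lemma:ContractiveIdempotent} finishes the job; your sufficiency direction is likewise a bare-hands computation using $AP = A$ and $QB = B$ rather than the factored form $U(PQ)V$. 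Both arguments share the same conceptual core---reduce the question to the product of the two projections and exploit the fact that a contractive idempotent is an orthogonal projection---but yours is more self-contained (no appeal to Theorem \ref{Theorem:Polar} or Lemma \ref{Lemma:ProjectionProduct}), while the paper's version makes the structural role of the polar factorization explicit and reuses Lemma \ref{Lemma:ProjectionProduct}, a statement of independent interest.
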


\begin{proof}
Let $A, B \in \M_{n}$ be partial isometries.
Write $A = UP$ and $B = QV$, in which $U,V \in \M_n$ are unitary and $P = A^*A$
and $Q = AA^*$ are orthogonal projections.

\smallskip\noindent($\Rightarrow$)
If $AB \in \M_n$ is a partial isometry,
then $AB = UPQV$ is a partial isometry, so $PQ$ is a partial isometry (Proposition \ref{Proposition:Basic}).
Lemma \ref{Lemma:ProjectionProduct} ensures that $PQ$ is an orthogonal
projection, so $PQ = (PQ)^* = Q^*P^* = QP$.  Thus, $A^*A$ and $BB^*$ commute.

\smallskip\noindent($\Leftarrow$)
If $P = A^*A$ and $Q=BB^*$ commute, then $PQ$ is a partial isometry since
$(P Q) (P Q)^* (P Q) = P Q Q^* P^* P Q
 = P Q P Q
 = P Q$.
Thus, $AB = (UP)(QV) = U(PQ)V$ is a partial isometry.
\end{proof}

\begin{Example}
The partial isometries
\begin{equation*}
A = 
\begin{bmatrix}
 1 & 0 & \0 \\[3pt]
 0 & \frac{1}{2} & \0 \\[3pt]
 0 & \frac{\sqrt{3}}{2} & \0 \\
\end{bmatrix}
\qquad\text{and}\qquad
B=
\begin{bmatrix}
 \0 & \0 & \0 \\[3pt]
 \frac{2}{3} & \frac{2}{3} & \frac{1}{3} \\[3pt]
 \frac{1}{3} & -\frac{2}{3} & \frac{2}{3} \\
\end{bmatrix}
\end{equation*}
satisfy $A^*A = \diag(1,1,0)$ and $BB^* = \diag(0,1,1)$.  Since $A^*A$ and $B^*B$ commute,
Theorem \ref{Theorem:Product} implies that
\begin{equation*}
AB = 
\begin{bmatrix}
 0 & 0 & 0 \\[2pt]
 \frac{1}{3} & \frac{1}{3} & \frac{1}{6} \\[4pt]
 \frac{1}{\sqrt{3}} & \frac{1}{\sqrt{3}} & \frac{1}{2 \sqrt{3}} \\
\end{bmatrix}
\end{equation*}
is a partial isometry (it is a partial isometry of rank one).
\end{Example}

Theorem \ref{Theorem:MP} ensures that $A \in \M_n$ is a partial isometry if and only if $A^* = A^+$.
This yields the following result of Erd\'elyi \cite[Thm.~3]{ErdelyiProduct} (this paper contains
several other results concerning products of partial isometries).

\begin{Proposition}
Let $A_1,A_2,\ldots,A_k \in \M_n$ be partial isometries.
Then $A_1 A_2 \cdots A_k$ is a partial isometry if and only if
$(A_1A_2\cdots A_n)^+ = A_n^+ A_{n-1}^+ \cdots A_1^+$.
\end{Proposition}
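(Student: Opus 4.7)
The plan is to reduce the claim to Theorem \ref{Theorem:MP} via a single substitution. The observation is that the two order-reversing operations in sight, the adjoint and the pseudoinverse, coincide on each individual factor precisely because each $A_i$ is a partial isometry. Once this is noted, the product itself being a partial isometry is exactly the assertion that these two order-reversing operations still agree after multiplication, so the biconditional is immediate.

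Concretely, first I would apply Theorem \ref{Theorem:MP} to each factor to get $A_i^{+} = A_i^*$ for $i = 1, 2, \ldots, k$. Then, using the fact that the adjoint always satisfies $(A_1 A_2 \cdots A_k)^* = A_k^* A_{k-1}^* \cdots A_1^*$, I would compute
\begin{equation*}
A_k^{+} A_{k-1}^{+} \cdots A_1^{+} \;=\; A_k^* A_{k-1}^* \cdots A_1^* \;=\; (A_1 A_2 \cdots A_k)^*.
\end{equation*}
Therefore the hypothesis $(A_1 A_2 \cdots A_k)^{+} = A_k^{+} A_{k-1}^{+} \cdots A_1^{+}$ is equivalent to $(A_1 A_2 \cdots A_k)^{+} = (A_1 A_2 \cdots A_k)^*$.

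Finally, I would invoke Theorem \ref{Theorem:MP} in the other direction: a matrix $B \in \M_n$ is a partial isometry if and only if $B^{+} = B^*$. Taking $B = A_1 A_2 \cdots A_k$ yields the desired equivalence. There is no real obstacle here; the only subtlety worth flagging is that the identity $(AB)^{+} = B^{+} A^{+}$ for general matrices is \emph{not} being invoked, since it is false in general without hypotheses on the factors. The proof uses only the order reversal of the adjoint together with the factor-by-factor identification $A_i^{+} = A_i^*$ granted by Theorem \ref{Theorem:MP}.
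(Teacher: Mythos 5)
Your proposal is correct and is essentially the argument the paper intends: the paper states this result of Erd\'elyi without giving a proof, remarking only that it follows from Theorem \ref{Theorem:MP}, and your reduction---each $A_i^{+}=A_i^{*}$, the adjoint reverses products, so the stated condition becomes $(A_1A_2\cdots A_k)^{+}=(A_1A_2\cdots A_k)^{*}$, which by Theorem \ref{Theorem:MP} is exactly the assertion that the product is a partial isometry---is precisely that derivation. Your caution that the general reverse-order law $(AB)^{+}=B^{+}A^{+}$ is not being invoked (it fails in general, and would trivialize the statement) is well placed.
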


Any product of partial isometries is a contraction.  Which contractions are products of partial isometries?
A precise answer was provided by Kuo and Wu \cite{MR977922}.

\begin{Theorem}
For a contraction $A \in \M_n$ the following are equivalent. 
\begin{enumerate}
\item $A$ is the product of $k$ partial isometries.
\item $\rank(I - A^* A) \leq k \dim \ker A$.
\item $|A| = (A^* A)^{1/2}$ is the product of $k$ idempotent matrices. 
\end{enumerate}
\end{Theorem}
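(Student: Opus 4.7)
\medskip

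\noindent\textbf{Plan.} Write $d(A)=\dim\ker A$ and $\delta(A)=\rank(I-A^*A)$. A diagonalization of $A^*A$ shows that $\delta(A)\ge d(A)$, with equality iff $A$ is a partial isometry, and that both quantities are unchanged upon multiplying $A$ on either side by a unitary matrix.

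\smallskip

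\emph{(a)$\Rightarrow$(b).} The starting point is the identity
\[
I-C^*B^*BC=(I-C^*C)+C^*(I-B^*B)C,
\]
valid for any matrices $B$ and partial isometries $C$. Sub-additivity of rank gives the one-step inequality $\delta(BC)\le\delta(B)+d(C)$. Induction on $k$ (setting $B=A_1\cdots A_{j-1}$ and $C=A_j$) then yields $\delta(A)\le\sum_{i=1}^{k}d(A_i)$. Since the rank of a product is at most the rank of any single factor, $d(A_i)\le d(A)$ for each $i$, so $\delta(A)\le k\,d(A)$.

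\smallskip

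\emph{(b)$\Rightarrow$(a).} This is the main content and the chief obstacle. Using an SVD and Proposition~\ref{Proposition:Basic}(c), reduce to the case $A=\diag(\sigma_1,\ldots,\sigma_n)$ with $\sigma_i\in[0,1]$. Let $d=d(A)$ be the number of zero $\sigma_i$ and $m=\delta(A)-d$ the number of $\sigma_i$ in $(0,1)$; the hypothesis reads $m\le(k-1)d$. Partition the $m$ ``bad'' indices into at most $k-1$ blocks of size $\le d$ and, within each block, pair each bad index with a distinct zero-index. For each chosen pair $(i,j)$ with $\sigma_i$ bad and $\sigma_j=0$, use a partial isometry whose restriction to the $\{i,j\}$-coordinate block is the rank-one matrix
\[
\begin{bmatrix}\sigma_i & \sqrt{1-\sigma_i^{2}}\\ 0 & 0\end{bmatrix}
\]
and which is the identity on the remaining basis vectors. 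Composing one such partial isometry per block, then a final diagonal partial isometry that zeros out the unused rows, produces $\diag(\sigma_1,\ldots,\sigma_n)$. The delicate part is the bookkeeping that keeps the intermediate products consistent and ensures each factor really is a partial isometry.

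\smallskip

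\emph{(a)$\Leftrightarrow$(c).} Via the polar decomposition $A=U|A|$ with $U$ unitary, one has $d(|A|)=d(A)$ and $\delta(|A|)=\delta(A)$, so the question reduces to whether a positive semidefinite contraction is a product of $k$ partial isometries iff it is a product of $k$ idempotents. The two factorizations are interconvertible by moving invertible or unitary pieces across adjacent factors: any partial isometry has the form (unitary)$\cdot$(orthogonal projection) by Theorem~\ref{Theorem:Polar}, while any idempotent has the similarity normal form $E=S\,X_r\,S^{-1}$, so absorbing the $S$'s and unitaries into neighbors lets one rewrite a product of $k$ of one type as a product of $k$ of the other. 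The rank condition (b) is precisely what controls when this juggling can be carried out with exactly $k$ factors on both sides.
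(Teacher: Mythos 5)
You should first know that the paper does not prove this theorem at all: it states the Kuo--Wu result, cites \cite{MR977922}, and explicitly omits the proof as ``long and somewhat computational,'' so there is no in-paper argument to measure yours against; I can only judge the sketch on its own terms. Your (a)$\Rightarrow$(b) is correct and essentially complete: the identity $I-C^*B^*BC=(I-C^*C)+C^*(I-B^*B)C$, subadditivity of rank, $\rank(I-C^*C)=\dim\ker C$ for a partial isometry $C$, and $\dim\ker A_i\le\dim\ker A$ (since $\rank A\le\rank A_i$) do give $\rank(I-A^*A)\le k\dim\ker A$. Your (b)$\Rightarrow$(a) construction can be made to work, but the step you wave off as ``bookkeeping'' is where the content sits: the diagonal factor that kills the kernel coordinates must be the \emph{rightmost} factor (it must annihilate the unused columns); if, as written, a ``final'' factor zeros out unused \emph{rows}, the entries $\sqrt{1-\sigma_i^2}$ deposited in the kernel columns by the block factors survive and the product is not $\diag(\sigma_1,\ldots,\sigma_n)$. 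With that ordering fixed and the pairs inside each block using disjoint index pairs, each block factor is indeed a partial isometry and the product is $\Sigma$, so this half is a repairable sketch rather than a gap.

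The genuine gap is (a)$\Leftrightarrow$(c). The proposed mechanism---write each partial isometry as $WQ$ via Theorem~\ref{Theorem:Polar}, each idempotent as $SX_rS^{-1}$, and ``absorb the $S$'s and unitaries into neighbors''---does not work: a partial isometry multiplied on one side by a non-unitary invertible matrix is in general no longer a partial isometry (Proposition~\ref{Proposition:Basic} only tolerates unitary factors), and an idempotent multiplied on one side by a unitary is in general no longer idempotent, so this juggling cannot preserve both the number of factors and their type; your closing remark that condition (b) ``is precisely what controls when this juggling can be carried out'' concedes as much and makes the step circular. The workable route is to run (c) through (b) as well. For (c)$\Rightarrow$(b): from $I-BC=(I-B)+B(I-C)$ and the fact that $\rank(I-E)=\dim\ker E$ for an idempotent $E$, a factorization $|A|=E_1\cdots E_k$ gives $\rank(I-|A|)\le\sum_i\dim\ker E_i\le k\dim\ker|A|=k\dim\ker A$, and $\rank(I-|A|)=\rank(I-A^*A)$ because $I-A^*A=(I-|A|)(I+|A|)$ with $I+|A|$ invertible. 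For (b)$\Rightarrow$(c): unitarily diagonalize $|A|=V\Sigma V^*$ (conjugating every factor by the same $V$ preserves idempotency, unlike your one-sided absorptions) and build $\Sigma$ from $k$ idempotents by the same pairing scheme, using the idempotent building block $\minimatrix{1}{\sigma-1}{0}{0}\minimatrix{1}{0}{1}{0}=\minimatrix{\sigma}{0}{0}{0}$ in place of your partially isometric $2\times2$ block, or invoke the Ballantine-type theorem on products of idempotents. As it stands, part (c) of your proposal is not proved.
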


Since the proof of the Kuo--Wu theorem is long and somewhat computational, we do not include it here.
Their theorem provides the following interesting corollary.

\begin{Corollary}\hfill
\begin{enumerate}
\item Any contraction $A \in \M_n$ can be factored into a finite product of partial isometries if and only if $A$ is unitary or singular.
\item Any singular contraction can be factored as a product of $n$ partial isometries.
\item There are singular contractions that cannot be factored as a product of $n - 1$ partial isometries.
\end{enumerate}
\end{Corollary}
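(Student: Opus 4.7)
The plan is to deduce each of the three assertions directly from the Kuo--Wu theorem, which characterizes products of $k$ partial isometries via the numerical condition $\rank(I - A^*A) \leq k \dim \ker A$. The main work reduces to choosing the right value of $k$ or, in one direction, extracting the right consequence from the inequality.

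For part (a), I would first note the easy implication: a unitary matrix is a partial isometry (Proposition \ref{Proposition:Basic}), so it is trivially a product of partial isometries, and a singular contraction will be handled by part (b). For the converse, suppose $A$ is a contraction that factors as a product of $k$ partial isometries for some $k \geq 1$. The Kuo--Wu theorem gives $\rank(I - A^*A) \leq k \dim \ker A$. If $A$ is not singular, then $\dim \ker A = 0$, forcing $\rank(I - A^*A) = 0$, i.e., $A^*A = I$. Since $A$ is a square invertible matrix with $A^*A = I$, it is unitary.

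Part (b) is a one-line application. If $A$ is a singular contraction in $\M_n$, then $\dim \ker A \geq 1$, so
\begin{equation*}
\rank(I - A^*A) \leq n \leq n \dim \ker A,
\end{equation*}
and Kuo--Wu with $k = n$ yields a factorization of $A$ into $n$ partial isometries.

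For part (c), the task is to exhibit a singular contraction $A \in \M_n$ for which $\rank(I - A^*A) > (n-1) \dim \ker A$. The natural attempt is to push $\dim \ker A$ as small as possible (namely $1$) while keeping $I - A^*A$ of full rank $n$; this requires all nonzero singular values of $A$ to lie strictly inside $(0,1)$. A concrete choice is $A = \diag(c, c, \ldots, c, 0)$ with any $c \in (0,1)$: then $\dim \ker A = 1$ while $I - A^*A = \diag(1-c^2, \ldots, 1-c^2, 1)$ has rank $n$, so Kuo--Wu forbids any factorization into fewer than $n$ partial isometries. The only mild subtlety is recognizing that minimizing $\dim \ker A$ and maximizing $\rank(I - A^*A)$ are simultaneously achievable; beyond that, the verification is immediate.
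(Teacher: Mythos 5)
Your proposal is correct, and it follows exactly the route the paper intends: the corollary is stated as an immediate consequence of the Kuo--Wu criterion $\rank(I - A^*A) \leq k \dim\ker A$, and the paper omits the deduction entirely. Your three applications of that inequality (with $k$ arbitrary, $k=n$, and $k=n-1$ for the explicit example $\diag(c,\ldots,c,0)$ with $c\in(0,1)$) are all valid and complete the argument as intended.
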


See Theorem \ref{Theorem:MAMB} for another problem concerning products of partial isometries.

Although the matrix product of two partial isometries need not be a partial isometry, their Kronecker product is.

\begin{Proposition}
Let $A \in \M_m$ and $B \in \M_n$.
Then $A \otimes B$ is a partial isometry if and only if $A$ and $B$ are partial isometries.
\end{Proposition}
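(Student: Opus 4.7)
The plan rests on two standard Kronecker product identities: $(X \otimes Y)^* = X^* \otimes Y^*$ and the mixed-product rule $(X \otimes Y)(Z \otimes W) = (XZ) \otimes (YW)$. Combining them yields the central computational identity
\begin{equation*}
(A \otimes B)(A \otimes B)^*(A \otimes B) = (AA^*A) \otimes (BB^*B),
\end{equation*}
which I would derive first as the algebraic core of the argument.

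The sufficiency direction then becomes essentially immediate: if $A$ and $B$ are partial isometries, then $AA^*A = A$ and $BB^*B = B$, so the right-hand side collapses to $A \otimes B$. Hence $A \otimes B$ satisfies Definition \ref{Definition:PI} and is a partial isometry.

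For the converse, my plan is to pass to singular values via Theorem \ref{Theorem:SVD}, which characterizes partial isometries as precisely those matrices whose nonzero singular values are all equal to $1$. Since the singular values of $A \otimes B$ are exactly the $mn$ products $\sigma_i(A)\sigma_j(B)$, the hypothesis forces each such product to lie in $\{0,1\}$. If $A \otimes B = 0$, then one of $A$ or $B$ vanishes and the claim reduces to the trivial case. Otherwise, both factors possess at least one nonzero singular value, and the constraint $\sigma_i(A)\sigma_j(B) = 1$ over every such pair forces all nonzero $\sigma_i(A)$ to share a common value $\alpha > 0$ and all nonzero $\sigma_j(B)$ to equal $1/\alpha$; pinning down $\alpha = 1$ is then the key remaining step. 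This is the main technical obstacle I anticipate, because the Kronecker product is invariant under the rescaling $(A,B) \mapsto (\alpha A, \alpha^{-1}B)$, so one must appeal to additional information (for instance, a normalization implicit in the setup, or a matrix-level uniqueness property of Kronecker decompositions applied directly to $(AA^*A) \otimes (BB^*B) = A \otimes B$) to conclude that both $A$ and $B$ are genuine partial isometries rather than merely scalar multiples of such.
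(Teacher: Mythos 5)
Your computational core and the sufficiency argument coincide exactly with the paper's proof: the paper's entire proof consists of the identity $(A\otimes B)(A\otimes B)^*(A\otimes B) = (AA^*A)\otimes(BB^*B)$ together with a citation for the Kronecker-product rules, which, as you note, settles the ``if'' direction immediately.

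Your hesitation about the converse is not a flaw in your argument but a flaw in the statement itself: the rescaling invariance you identified, $(A,B)\mapsto(\alpha A,\alpha^{-1}B)$, is not an obstacle to be circumvented but a counterexample generator, so the ``additional information'' you hope to invoke does not exist. Concretely, with $m=n=1$, take $A=[2]$ and $B=[\tfrac{1}{2}]$; then $A\otimes B=[1]$ is unitary, hence a partial isometry, while neither factor satisfies $XX^*X=X$. Degenerate cases fail too: $A=0_m$ with $B$ arbitrary makes $A\otimes B=0$ a partial isometry regardless of $B$. Your singular-value analysis via Theorem \ref{Theorem:SVD} extracts the most that is actually true: if $A\otimes B\neq 0$ is a partial isometry, then all nonzero singular values of $A$ equal some $\alpha>0$ and all nonzero singular values of $B$ equal $\alpha^{-1}$, so that $cA$ and $c^{-1}B$ are partial isometries for $c=\norm{A}^{-1}$, but $\alpha=1$ cannot be forced. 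The paper's one-line proof does not resolve this either: for necessity the quoted identity only yields $(AA^*A)\otimes(BB^*B)=A\otimes B$, and the same rescaling shows this is strictly weaker than the asserted conclusion. In short, your proof is complete for the direction the paper actually proves, and your diagnosis of the remaining direction correctly locates a genuine error in the proposition as stated.
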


\begin{proof}
This follows from the fact that
$AA^*A \otimes BB^*B = (A \otimes B)(A \otimes B)^*(A \otimes B)$; 
see \cite[Sect.~3.6]{GarciaHorn} for properties of the Kronecker product.
\end{proof}

\section{Similarity}\label{Section:Similar}
In this section we consider similarity invariants, such as the spectrum, characteristic polynomial,
and Jordan canonical form, of partial isometries.  Among other things, we discuss a recent result of the first author
and David Sherman, who solved the similarity problem for partially isometric matrices \cite{MSPI}.

\subsection{Spectrum and characteristic polynomial}\label{Section:Spectrum}
In this section we describe the spectrum and characteristic polynomial of a partial isometry.

\begin{Proposition}\label{Proposition:Spectrum}
If $A \in \M_n$ is a partial isometry, then
$\sigma(A) \subset \D^-$.  Moreover,
$0 \in \sigma(A)$ if and only if $A$ is not unitary.
\end{Proposition}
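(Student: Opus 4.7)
The proof should be short since nearly everything we need has already been assembled in the preceding material. The plan is to handle the two assertions separately, using the norm bound for partial isometries for the spectral containment and the invertibility criterion for the unitarity dichotomy.

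For the containment $\sigma(A)\subseteq \D^-$, I would first dispatch the trivial case $A=0$ (where $\sigma(A)=\{0\}$) and then invoke Corollary \ref{Corollary:Norm} to conclude $\norm{A}=1$ when $A\neq 0$. Since the spectral radius is bounded above by the operator norm, this immediately gives $|\lambda|\leq 1$ for every $\lambda\in\sigma(A)$. (Alternatively, one could cite Theorem \ref{Theorem:SVD} to note that the singular values of $A$ all lie in $\{0,1\}$, which likewise forces $\norm{A}\leq 1$.)

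For the second assertion, I would argue both directions via invertibility. If $A$ is unitary, then $A$ is invertible and so $0\notin\sigma(A)$. Conversely, if $0\notin\sigma(A)$, then $A$ is invertible, and Proposition \ref{Proposition:Basic}(g) says that an invertible partial isometry is unitary. This gives the stated equivalence.

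There is no real obstacle here: the preliminaries and the algebraic structure results developed in Sections \ref{Section:Preliminaries} and \ref{Section:Algebraic} do all the work. The only minor subtlety is remembering to deal with the degenerate case $A=0$ when applying Corollary \ref{Corollary:Norm}, but this is immediate since $\sigma(0)=\{0\}\subseteq\D^-$ and a nonzero partial isometry is never the zero matrix (so the unitary/not-unitary dichotomy in the second statement is not affected).
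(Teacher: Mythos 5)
Your proposal is correct and follows essentially the same route as the paper: the spectral containment comes from $\norm{A}\leq 1$ (the paper writes out the eigenvector estimate $|\lambda| = \norm{A\vec z}\leq\norm{A}\norm{\vec z}\leq 1$ via Corollary \ref{Corollary:Norm}, which is exactly your spectral-radius bound), and the dichotomy at $0$ is handled in both cases by observing that $0\notin\sigma(A)$ means $A$ is an invertible partial isometry and hence unitary by Proposition \ref{Proposition:Basic}(g). Your explicit treatment of the degenerate case $A=0$ is harmless but unnecessary, since the eigenvector estimate covers it.
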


\begin{proof}
If $A \vec{z} = \lambda \vec{z}$ and $\|\vec{z}\|=1$, then 
$|\lambda|  = \|A \vec{z}\| \leq \|A\| \|\vec{z}\| \leq 1$
by Corollary \ref{Corollary:Norm}.  Thus, $\sigma(A) \subset \D^-$.
For the second statement, observe that $0 \notin \sigma(A)$ if and only if
$A$ is an invertible partial isometry, that is, $A$ is unitary.
\end{proof}

Not every finite subset of $\D^-$ is the spectrum of a partial isometry.  Proposition \ref{Proposition:Spectrum}
ensures that $0$ is an eigenvalue of every non-unitary partial isometry.  
Halmos and McLaughlin proved that this is essentially the only restriction \cite[Thm.~3]{Halmos}.  

\begin{Theorem}\label{Theorem:Halmos}
Every monic polynomial whose roots lie in $\D^-$ and include zero
is the characteristic polynomial of a (non-unitary) partial isometry.
\end{Theorem}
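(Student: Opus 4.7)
The plan is to apply Theorem~\ref{Theorem:N}(c). Factor $p(z) = z^m q(z)$, where $m \geq 1$ is the multiplicity of $0$ as a root of $p$ and $q(z) = \prod_{i=1}^{n-m}(z - \mu_i)$ has all its roots $\mu_i$ in $\D^-\setminus\{0\}$. If $m = n$, then $A = 0_n$ is a partial isometry with $p_A(z) = z^n = p(z)$, so henceforth I assume $m < n$. My goal is to construct a contraction $B \in \M_{n-m}$ with characteristic polynomial $q$ whose defect $I_{n-m} - B^*B$ has rank at most one, say $I_{n-m} - B^*B = \vec{v}\vec{v}^*$. Setting $C = \vec{v}^*$, the block matrix
$$A_0 = \begin{bmatrix} B & 0 \\ C & 0 \end{bmatrix} \in \M_{n-m+1}$$
is a partial isometry by Theorem~\ref{Theorem:N}(c), and its block upper-triangular structure gives $p_{A_0}(z) = z \cdot p_B(z) = z q(z)$. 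Then $A = A_0 \oplus 0_{m-1} \in \M_n$ is a partial isometry (a direct sum of partial isometries) with $p_A(z) = z^m q(z) = p(z)$; because $p(0) = 0$, $A$ is singular and hence non-unitary by Proposition~\ref{Proposition:Basic}(g).

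I would construct $B$ by induction on $k = \deg q$. The base case $k = 1$ takes $B = [\mu_1]$ and $\vec{v} = \sqrt{1 - |\mu_1|^2}$. For the inductive step, suppose $B_{k-1} \in \M_{k-1}$ has been built with $p_{B_{k-1}}(z) = \prod_{i < k}(z - \mu_i)$ and $I_{k-1} - B_{k-1}^*B_{k-1} = \vec{v}_{k-1}\vec{v}_{k-1}^*$. I would set
$$B_k = \begin{bmatrix} B_{k-1} & \vec{w} \\ 0 & \mu_k \end{bmatrix}$$
and choose $\vec{w} \in \C^{k-1}$ so that $I_k - B_k^*B_k$ remains a rank-one outer product. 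Block multiplication shows this amounts to solving $B_{k-1}^*\vec{w} = -\beta \vec{v}_{k-1}$ together with $\beta^2 = 1 - \|\vec{w}\|^2 - |\mu_k|^2$ for some $\beta \geq 0$. Since every eigenvalue of $B_{k-1}$ is nonzero, $B_{k-1}$ is invertible, so the first equation gives $\vec{w} = -\beta(B_{k-1}^*)^{-1}\vec{v}_{k-1}$; substituting into the second yields a unique nonnegative $\beta$, available because $|\mu_k| \leq 1$. The updated defect is $I_k - B_k^*B_k = \vec{v}_k\vec{v}_k^*$ with $\vec{v}_k = \begin{bmatrix}\vec{v}_{k-1} \\ \beta \end{bmatrix}$, and block upper-triangularity gives $p_{B_k}(z) = p_{B_{k-1}}(z)(z - \mu_k)$, closing the induction.

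The main obstacle is ensuring that the off-diagonal blocks of $I_k - B_k^*B_k$ can be fit into a single rank-one extension of $\vec{v}_{k-1}$: this requires solvability of $B_{k-1}^*\vec{w} = -\beta\vec{v}_{k-1}$, which in turn requires $B_{k-1}$ to be invertible. The key structural choice is to reserve all the zero roots of $p$ for the trailing zero rows and columns that live \emph{outside} the induction---one zero root absorbed by the $C$-block of Theorem~\ref{Theorem:N}(c) and the remaining $m - 1$ zero roots absorbed by the summand $0_{m-1}$---so that the induction only has to process the nonzero roots of $p$, for which every intermediate $B_k$ is automatically invertible and the inductive step goes through without obstruction.
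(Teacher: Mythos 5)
Your proof is correct, but it takes a genuinely different route from the paper's. The paper inducts on the degree within the class of partial isometries itself: if all remaining roots lie on $\T$ it takes a unitary direct summed with a $1\times 1$ zero block, and otherwise it peels off a root $\lambda\in\D$, obtains a partial isometry $A$ for the quotient polynomial by induction, and borders it by a column $\vec{z}\in(\ran A)^{\perp}$ with $\norm{\vec{z}}^2=1-|\lambda|^2$, the zero eigenvalue being exactly what guarantees $(\ran A)^{\perp}\neq\{\vec{0}\}$. You instead quarantine the zero roots, run the induction on contractions $B_k$ whose defect $I-B_k^*B_k$ is a rank-at-most-one outer product and whose spectrum consists only of the nonzero roots (so each $B_{k-1}$ is invertible, the system $B_{k-1}^*\vec{w}=-\beta\vec{v}_{k-1}$ is solvable, and the unique $\beta\geq 0$ comes from $\beta^2\big(1+\norm{(B_{k-1}^*)^{-1}\vec{v}_{k-1}}^2\big)=1-|\mu_k|^2$), and only then pass to a partial isometry in one step via Theorem~\ref{Theorem:N}(c) with $C=\vec{v}^*$, padding with $0_{m-1}$. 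Your version handles unimodular roots uniformly ($\beta=0$, $\vec{w}=\vec{0}$) and so avoids the paper's case split for roots on $\T$, at the cost of the invertibility bookkeeping that reserving the zero roots pays for; your intermediate matrices are also close in spirit to Lemma~\ref{Lemma:Superdiagonal} and the model \eqref{eq:modelHM}. Two harmless quibbles: the matrix with $B$ and $C$ stacked in its first block column is block \emph{lower} (not upper) triangular, though the identity $p_{A_0}(z)=z\,p_B(z)$ you use is of course correct; and to see that this matrix is itself a partial isometry it is cleanest to compute $A_0^*A_0=I_{n-m}\oplus 0$ and invoke Theorem~\ref{Theorem:PI} directly, since Theorem~\ref{Theorem:N} is phrased up to unitary similarity.
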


\begin{proof}
We proceed by induction on the degree $n$ of the polynomial.
The base case $n=1$ concerns the polynomial $z$, which is the characteristic polynomial of the
$1 \times 1$ partial isometry $[0]$.
For our induction hypothesis, suppose that every monic polynomial of degree $n-1$ whose roots lie in $\D^-$
and include zero is the characteristic polynomial of a partial isometry.
Suppose that $p$ is a polynomial of degree $n$ whose roots lie in $\D^-$ and include $0$.  
There are two possibilities.
\begin{enumerate}[wide, labelwidth=!, labelindent=0pt]
\item If the other $n-1$ roots of $p$ lie on $\T$, then there is a unitary $U \in \M_{n-1}$ with these roots as eigenvalues,
repeated according to multiplicity.  The characteristic polynomial of the partial isometry $U \oplus [0] \in \M_n$ is $p(z)$, as desired.
\item If $p(z)/z$ has a root $\lambda \in \D$, then $p(z) = (z - \lambda)q(z)$, in which $q$ is monic, has zero as a root, and $\deg q = n-1$.
The induction hypothesis give a partial isometry $A$ with characteristic polynomial $q(z)$.
Since $0 \in \sigma(A)$, it follows that $\rank A \leq n-1$
and hence there is a $\vec{z} \in (\ran A)^{\perp}$ with $\| \vec{z} \|^2 = 1 - |\lambda|^2$.  Now verify that
\begin{equation*}
\minimatrix{A}{\vec{z}}{\vec{0}^{\mathsf{T}}} \lambda \in \M_{n+1}
\end{equation*}
is a partial isometry with characteristic polynomial $(z-\lambda)q(z) = p(z)$.
\end{enumerate}
This completes the induction.
\end{proof}

\begin{Example}
$z(z-\frac{1}{2})$ is the characteristic polynomial of the partial isometry
\begin{equation*}
    \begin{bmatrix}
        0 & \frac{ \sqrt{3} }{2} \\[2pt]
        0 & \frac{1}{2}
    \end{bmatrix}   .
\end{equation*}
On the other hand, $(z-\frac{1}{2})^2$ is not the characteristic polynomial of a partial isometry.
If it were, then the partial isometry would be invertible ($0$ is not an eigenvalue) and hence unitary.
Thus, its eigenvalues would lie on $\T$, which is not the case.
\end{Example}

There is another proof, which appeared in \cite{MSPI}, of Theorem \ref{Theorem:Halmos} that is of independent interest
because of its critical use of the Weyl--Horn inequalities \cite{HornAlfred,Weyl}.\footnote{The Horn
in question is Alfred Horn, not the Roger A.~Horn of \emph{Matrix Analysis} fame \cite{HornJohnson}.}

\begin{Theorem}\label{Theorem:WeylHorn}
There is an $n \times n$ matrix with singular values
$\sigma_1 \geq \sigma_2 \geq \cdots \geq\sigma_n \geq 0$ and eigenvalues $\lambda_1,\lambda_2,\ldots,\lambda_n$,
indexed so that $|\lambda_1| \geq |\lambda_2| \geq \cdots \geq |\lambda_n|$, if and only if
\begin{equation*}
\sigma_1 \sigma_2 \cdots \sigma_n = |\lambda_1 \lambda_2 \cdots \lambda_n|
\qquad\text{and}\qquad
    \sigma_1 \sigma_2 \cdots \sigma_k \geq |\lambda_1 \lambda_2 \cdots \lambda_k| 
\end{equation*}
for $k = 1,2,\ldots,n-1$.
\end{Theorem}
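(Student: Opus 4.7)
The plan is to prove the two directions separately, with the ``only if'' direction being a clean application of the standard inequality $\rho(M) \le \|M\|$ lifted to exterior powers, and the ``if'' direction (Horn's contribution) requiring an inductive construction.

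For necessity, I would begin with the SVD $A = U\Sigma V^*$, which immediately gives $|\det A| = \sigma_1\sigma_2\cdots \sigma_n$. On the other hand, from the Schur decomposition (or directly from the characteristic polynomial), $\det A = \lambda_1 \lambda_2\cdots \lambda_n$, which produces the equality case. For the product inequalities, I would invoke the $k$-th compound (or exterior power) matrix $C_k(A) \in \M_{\binom{n}{k}}$, whose singular values are exactly the products $\sigma_{i_1}\cdots \sigma_{i_k}$ over $k$-subsets of indices, and whose eigenvalues are exactly the products $\lambda_{i_1}\cdots \lambda_{i_k}$. With the singular values and eigenvalues of $A$ ordered as in the statement, this gives $\|C_k(A)\| = \sigma_1\cdots \sigma_k$ and $\rho(C_k(A)) = |\lambda_1\cdots \lambda_k|$. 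The elementary fact $\rho(M) \le \|M\|$ applied to $C_k(A)$ then yields $|\lambda_1\cdots \lambda_k| \le \sigma_1\cdots \sigma_k$ for $k = 1,2,\ldots,n-1$.

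For sufficiency, I would proceed by induction on $n$. The base case $n = 1$ is immediate: $\sigma_1 = |\lambda_1|$ forces $A = [\lambda_1]$ to work. For the inductive step, since we may build $A$ up to unitary similarity, Schur's theorem reduces the problem to constructing an upper triangular matrix of the form
\begin{equation*}
A = \begin{bmatrix} B & \vec{v} \\ \vec{0}^{\mathsf{T}} & \lambda_n \end{bmatrix},
\end{equation*}
where $B \in \M_{n-1}$ is upper triangular with diagonal entries $\lambda_1,\ldots,\lambda_{n-1}$. The aim is to pick singular values $\tau_1 \ge \tau_2 \ge \cdots \ge \tau_{n-1} \ge 0$ and a column $\vec{v}$ so that (i) the multisets $\{\tau_i\}$ and $\{\lambda_i\}_{i<n}$ satisfy the Weyl--Horn conditions, so that the inductive hypothesis delivers $B$; and (ii) the bordered matrix $A$ has prescribed singular values $\sigma_1, \ldots, \sigma_n$. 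Condition (ii) is controlled by Cauchy-type interlacing for singular values of a bordered (column- and row-extended) matrix, which pins down feasible $\tau_i$ in terms of $\sigma_i$.

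The main obstacle is step (i): exhibiting a concrete, always-feasible recipe for $\tau_1,\ldots,\tau_{n-1}$ and checking that the Weyl--Horn conditions for the reduced data follow from those for the original data. The standard trick is to ``absorb'' $\lambda_n$ by setting $\tau_i = \sigma_i$ for most $i$ and modifying one or two entries so that $\tau_1\cdots \tau_{n-1} = |\lambda_1 \cdots \lambda_{n-1}|$, while the partial-product inequalities $\tau_1\cdots \tau_k \ge |\lambda_1\cdots \lambda_k|$ continue to hold. A natural choice splits into cases depending on how $|\lambda_n|$ compares with $\sigma_n$: in the favorable regime one leaves $\tau_1,\ldots,\tau_{n-2}$ equal to $\sigma_1,\ldots,\sigma_{n-2}$ and sets $\tau_{n-1} = \sigma_{n-1}\sigma_n/|\lambda_n|$; in the unfavorable regime one ``pushes'' the adjustment into earlier $\tau_i$'s, exploiting the hypothesized slack $\sigma_1\cdots \sigma_k \ge |\lambda_1\cdots \lambda_k|$ to absorb it. Verifying that this always produces a legal $(n-1)$-tuple, and then checking with the interlacing criterion that some $\vec{v}$ realizes the prescribed singular values of $A$, is the technical heart of the argument and the place where the full strength of all the Weyl--Horn inequalities is used.
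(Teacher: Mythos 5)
The paper offers no proof of Theorem \ref{Theorem:WeylHorn}: it is quoted from Weyl and Horn \cite{HornAlfred,Weyl} and used as a black box, so there is no internal argument to compare yours against. Judged on its own terms, your necessity half is correct and is essentially Weyl's argument: the $k$-th compound $C_k(A)$ has operator norm $\sigma_1\cdots\sigma_k$ and spectral radius $|\lambda_1\cdots\lambda_k|$, so $\rho(C_k(A))\le\|C_k(A)\|$ gives the inequalities, and the determinant gives the equality at $k=n$.

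The sufficiency half, however, has a genuine gap, and the specific recipe you propose can fail. If $A=\begin{bmatrix}B&\vec{v}\\ \vec{0}^{\mathsf{T}}&\lambda_n\end{bmatrix}$, then $B^*B$ is a principal $(n-1)\times(n-1)$ submatrix of $A^*A$, so Cauchy interlacing forces $\sigma_i\ge\tau_i\ge\sigma_{i+1}$ for all $i$; interlacing is therefore a \emph{constraint} on your choice of the $\tau_i$, not a device that produces $\vec{v}$. Your ``favorable regime'' choice $\tau_i=\sigma_i$ for $i\le n-2$ and $\tau_{n-1}=\sigma_{n-1}\sigma_n/|\lambda_n|$ can violate $\tau_{n-1}\ge\sigma_n$: take $n=3$, $|\lambda_1|=|\lambda_2|=|\lambda_3|=\tfrac{1}{2}$ and $\sigma=(1,\tfrac{2}{5},\tfrac{5}{16})$, which satisfies all the Weyl--Horn conditions, yet gives $\tau_2=\tfrac{1}{4}<\sigma_3=\tfrac{5}{16}$, so \emph{no} column $\vec{v}$ makes that bordered matrix have the prescribed singular values. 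The formula also breaks down when $\lambda_n=0$, and the ``unfavorable regime'' redistribution is only gestured at, even though that is exactly where the full strength of the hypotheses must be used. Finally, even with admissible $\tau_i$ in hand you still owe a realization lemma: given $B$ and the forced corner entry $\lambda_n$, one must prove some $\vec{v}$ achieves the target singular values (for instance via the converse of Cauchy interlacing for bordered Hermitian matrices applied to $A^*A$, together with a Schur-complement determinant argument to handle the constrained corner, or via Horn's original inductive construction built on a $2\times 2$ lemma). That unproved completion step, together with a correct and always-feasible choice of the $\tau_i$, is the actual content of Horn's theorem, so the proposal as it stands does not establish the sufficiency direction.
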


Suppose that $\lambda_1,\lambda_2,\ldots,\lambda_n \in \D$ are indexed so that
\begin{equation*}
|\lambda_1| \geq |\lambda_2| \geq \cdots \geq 
|\lambda_r| > |\lambda_{r+1}| = \cdots = |\lambda_n| = 0;
\end{equation*}
that is, the final $n-r$ terms in the sequence are $0$.
If we let
\begin{equation*}
\sigma_1 = \sigma_2 = \cdots = \sigma_r = 1 \qquad \text{and}\qquad  \sigma_{r+1} = \cdots = \sigma_n = 0,
\end{equation*}
then Theorem \ref{Theorem:WeylHorn} provides an $A \in \M_n$ with singular values
$\sigma_1,\sigma_2,\ldots,\sigma_n$ and eigenvalues $\lambda_1,\lambda_2,\ldots,\lambda_n$.
The singular values of $A$ are in $\{0,1\}$, so $A$ is a partial isometry whose characteristic polynomial
has the prescribed roots.

\subsection{Similarity and Jordan form}\label{Section:Jordan}
Theorem \ref{Theorem:Halmos} describes the possible
characteristic polynomials of partial isometries.  The following examples show that this does not settle the similarity problem for the class.

\begin{Example}
The partial isometries
\begin{equation*}
\minimatrix{0}{1}{0}{0}
\qquad \text{and} \qquad
\minimatrix{0}{0}{0}{0}
\end{equation*}
have the same characteristic polynomial, namely $z^2$, but they are not similar
since their ranks differ.  
\end{Example}

A complicating issue is that the property ``similar to a partial isometry'' is
not inherited by direct summands.  Consider the next example.

\begin{Example}
The $1 \times 1$ matrix $[ \frac{1}{2} ]$ is a direct summand of $\diag(0,\frac{1}{2})$, 
which is similar to the partial isometry
\begin{equation*}
    \begin{bmatrix}
        0 & \frac{ \sqrt{3} }{2} \\[3pt]
        0 & \frac{1}{2}
    \end{bmatrix}   .
\end{equation*}
However, $[\frac{1}{2}]$ is not similar to a partial isometry since the spectrum of a non-unitary partial isometry
must include $0$ (Proposition \ref{Proposition:Spectrum}).  
\end{Example}

The following theorem is due to the first author and David Sherman \cite{MSPI}.
The proof requires several lemmas and is deferred until the end of this section.
In what follows, let $J_n(\lambda)$ denote the $n \times n$ Jordan block with eigenvalue $\lambda$. 
Recall that every $n \times n$ matrix is similar to a direct
sum of Jordan blocks \cite[Thm.~11.2.14]{GarciaHorn}.
The nullity of $A - \lambda I$ equals the number of Jordan blocks for the eigenvalue $\lambda$.

    \begin{Theorem}\label{Theorem:SimilarPI}
        $A \in \M_n$ is similar to a partial isometry if and only if the following conditions hold.
        \begin{enumerate}
            \item $\sigma(A) \subseteq \D^-$.
            \item If $\zeta \in \sigma(A) \cap \T$, then its algebraic and geometric multiplicities are equal.
            \item $\dim\ker A \geq \dim\ker (A - \lambda I)$ for each $\lambda \in \sigma(A) \cap \D$.
        \end{enumerate}
    \end{Theorem}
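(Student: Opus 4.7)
The plan is to establish necessity and sufficiency separately. Since conditions (a)--(c) are invariants of similarity, and the property of being similar to a partial isometry depends only on the similarity class, I may work with the Jordan canonical form throughout.

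\smallskip\noindent\emph{Necessity.} Suppose $A$ is similar to a partial isometry $B$. Condition (a) is immediate from Proposition \ref{Proposition:Spectrum}. For (b), if the Jordan form of $B$ contained a block $J_m(\zeta)$ with $m \ge 2$ and $|\zeta|=1$, then $\|J_m(\zeta)^k\|$ would grow polynomially in $k$, whereas $\|B^k\| \le \|B\|^k \le 1$ by Corollary \ref{Corollary:Norm}; since the Jordan form and $B$ differ by a fixed similarity, this is a contradiction. For (c), let $P = B^*B$ be the orthogonal projection onto $(\ker B)^\perp$ (Theorem \ref{Theorem:PI}) and consider the map $T\vec{x} = (I - P)\vec{x}$ from $\ker(B - \lambda I)$ into $\ker B$. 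If $T\vec{x} = \vec{0}$ and $B\vec{x} = \lambda\vec{x}$ with $\lambda \in \D \setminus \{0\}$, then $\vec{x} = P\vec{x} = \lambda B^*\vec{x}$, forcing $B^*\vec{x} = \lambda^{-1}\vec{x}$; combined with $\|B^*\| \le 1$ this gives $|\lambda| \ge 1$, a contradiction. Hence $T$ is injective, so $\dim \ker(B - \lambda I) \le \dim\ker B$ (the case $\lambda = 0$ is trivial).

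\smallskip\noindent\emph{Sufficiency.} Replace $A$ by its Jordan form and split $A = A_\T \oplus A_\D$, where $A_\T$ collects the $\T$-eigenvalue blocks (each of size $1$ by (b), so $A_\T$ is a diagonal unitary, hence a partial isometry) and $A_\D$ the $\D$-eigenvalue blocks. Let $b_\lambda$ denote the number of Jordan blocks of $A_\D$ at $\lambda$. Condition (c) says $b_\lambda \le b_0$ for every $\lambda \in \sigma(A_\D) \cap (\D \setminus \{0\})$; and if $0 \notin \sigma(A_\D)$, condition (c) forces $A_\D$ to be empty. Organize the Jordan blocks into $b_0$ cells: cell $j$ consists of the $j$-th block at $0$, of size $\ell_j$, together with the $j$-th block at each nonzero $\lambda$ with $b_\lambda \ge j$, of size $m_{\lambda, j}$. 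I realize each cell as the compressed shift $S_{\phi_j}$ on the model space $K_{\phi_j}$ associated to the finite Blaschke product
\[
\phi_j(z) = z^{\ell_j} \prod_{\lambda\,:\,b_\lambda \ge j} \left( \frac{z - \lambda}{1 - \bar\lambda z} \right)^{m_{\lambda, j}}.
\]
Standard model-space theory (Section \ref{Section:Compressed}) identifies the Jordan structure of $S_{\phi_j}$ with the zeros of $\phi_j$ and their multiplicities, and $S_{\phi_j}$ is a partial isometry precisely because $\phi_j(0) = 0$. Then $A_\T \oplus \bigoplus_j S_{\phi_j}$ is a partial isometry similar to $A$.

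\smallskip\noindent The main obstacle is sufficiency. The combinatorial role of (c) is to let each nonzero-eigenvalue Jordan block be paired with a block at $0$, and the analytic content---that each paired cell admits a partial isometric realization---is carried by the defect-one compressed shift attached to a Blaschke product vanishing at the origin. One could instead build each cell by hand as a block-triangular partial isometry of the form $\minimatrix{B_1}{0}{\vec{v}^{*}}{0}$ with $B_1$ a contraction of the prescribed Jordan type and $\vec{v}\vec{v}^{*} = I - B_1^*B_1$ of rank one, but verifying the Jordan structure directly from such a construction is more tedious.
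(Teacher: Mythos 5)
Your proof is correct, and while its overall skeleton matches the paper's (reduce via (b) to the case $\sigma(A)\subset\D$ plus a unitary summand, then use (c) to group the Jordan blocks into defect-one ``cells,'' each containing one block at $0$ and at most one block per nonzero eigenvalue), all three component arguments differ from the paper's. For necessity of (b) you use power-boundedness of a contraction against the growth of powers of a Jordan block $J_m(\zeta)$ with $|\zeta|=1$, $m\ge 2$, whereas the paper invokes the decomposition $A \cong T\oplus U$ of Theorem \ref{Theorem:CNU}; for necessity of (c) you produce an injective map $\vec{x}\mapsto (I-B^*B)\vec{x}$ from $\ker(B-\lambda I)$ into $\ker B$ (the eigenvalue equation $B^*\vec{x}=\lambda^{-1}\vec{x}$ contradicting $\|B^*\|\le 1$), whereas the paper runs a rank estimate through the polar factorization $A=UP$ of Theorem \ref{Theorem:Polar}; both of your arguments are valid and arguably more elementary. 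For sufficiency, you realize each cell as a compressed shift $S_{\phi_j}$ attached to a Blaschke product vanishing at the origin, while the paper constructs by induction (Lemma \ref{Lemma:Superdiagonal}) an upper-triangular defect-one partial isometry with prescribed diagonal and nonzero superdiagonal, then identifies its Jordan form via Lemmas \ref{Lemma:JCFB} and \ref{Lemma:Jordan}. The one place you lean on unproved material is the assertion that the Jordan structure of $S_{\phi_j}$ consists of exactly one block per distinct zero of $\phi_j$, of size equal to its multiplicity: Section \ref{Section:Compressed} of the paper records only the spectrum, the one-dimensional kernel, and the matrix form \eqref{eq:modelHM}, not the Jordan form. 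This is not a genuine gap --- the claim is standard ($S_{\phi}$ is cyclic, so nonderogatory), and it also follows from \eqref{eq:modelHM} having nonzero first-superdiagonal entries together with the same Lemmas \ref{Lemma:JCFB} and \ref{Lemma:Jordan} --- but making it self-contained essentially forces you back to an upper-triangular matrix of exactly the type Lemma \ref{Lemma:Superdiagonal} builds directly. In short: your necessity arguments trade the paper's structural theorems for short hands-on estimates, and your sufficiency argument trades the paper's elementary induction for imported model-space theory; the paper's route is self-contained, yours is shorter where it borrows.
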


   Condition (b) ensures that the Jordan blocks 
   for each eigenvalue of unit modulus are all $1 \times 1$  and (c) tells us that no eigenvalue in $\D$
   can give rise to more Jordan blocks than $0$ does.  Consequently,
   \begin{equation*}
   	\begin{bmatrix}
	\frac{1}{2} & & & \\
	& \frac{1}{2} & & \\
	&&0&\\
	&&&0\\
	\end{bmatrix},
	\qquad
	   	\begin{bmatrix}
	\frac{1}{2} & 1& & \\
	& \frac{1}{2} & & \\
	&&0&\\
	&&&0\\
	\end{bmatrix}
	\quad\text{and}\quad
   	\begin{bmatrix}
	\frac{1}{2} & 1& & \\
	& \frac{1}{2} & & \\
	&&0&1\\
	&&&0\\
	\end{bmatrix}   
\end{equation*}
are possible Jordan forms for a partial isometry, while
\begin{equation*}
   	\begin{bmatrix}
	\frac{1}{2} & & & \\
	& \frac{1}{2} & & \\
	&&0&1\\
	&&&0\\
	\end{bmatrix}
\end{equation*}
is not.
    
The first lemma that we need is a
variation of Theorem \ref{Theorem:Halmos}.  
To prescribe the Jordan canonical form of the resulting upper-triangular partial isometry, 
we need to control the entries on its first superdiagonal.
    
    \begin{Lemma}\label{Lemma:Superdiagonal}
        For any $\xi_1,\xi_2,\ldots, \xi_{n-1} \in \D$, there
        exists an upper-triangular partial isometry 
        $V  \in \M_n$ such that 
        \begin{enumerate}
            \item the diagonal of $V$ is $(0,\xi_1,\xi_2,\ldots, \xi_{n-1})$,
            \item the final $n-1$ columns of $V$ are are orthonormal, and
            \item each entry of $V$ on the first superdiagonal is nonzero.
        \end{enumerate}
    \end{Lemma}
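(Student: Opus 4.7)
The plan is as follows. Upper-triangularity together with $V_{11}=0$ forces the first column of $V$ to vanish, so $V=[\vec 0\mid N]$ with $N\in\M_{n\times(n-1)}$, and Example~\ref{Example:N0} shows that $V$ is a partial isometry if and only if the columns $\vec v_2,\ldots,\vec v_n$ of $N$ are orthonormal. Writing $\rho_j=\sqrt{1-|\xi_j|^2}>0$, the task reduces to producing orthonormal $\vec v_k\in\vecspan\{\vec e_1,\ldots,\vec e_k\}$ with $(\vec v_k)_k=\xi_{k-1}$ and $(\vec v_k)_{k-1}\neq 0$ for $k=2,\ldots,n$.

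I would proceed by induction on $k$. For $k=2$, take $\vec v_2=\rho_1\vec e_1+\xi_1\vec e_2$. For the inductive step, assume $\vec v_2,\ldots,\vec v_{k-1}$ satisfy the desired conditions. Because they form $k-2$ orthonormal vectors in the $(k-1)$-dimensional space $\vecspan\{\vec e_1,\ldots,\vec e_{k-1}\}$, their orthogonal complement inside that space is one-dimensional; let $\vec w_k$ be a unit vector spanning it. Set $\vec v_k=\rho_{k-1}\vec w_k+\xi_{k-1}\vec e_k$. Then $\vec v_k$ has unit norm (since $\vec w_k\perp\vec e_k$), is orthogonal to each $\vec v_j$ with $j<k$ (both $\vec w_k$ and $\vec e_k$ are, the latter because $(\vec v_j)_k=0$), lies in $\vecspan\{\vec e_1,\ldots,\vec e_k\}$, and has $k$-th coordinate $\xi_{k-1}$, as required by (a) and (b).

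The only substantive step—and the main obstacle—is verifying the superdiagonal condition $(\vec v_k)_{k-1}\neq 0$, equivalently $(\vec w_k)_{k-1}\neq 0$, equivalently $\vec e_{k-1}\notin M:=\vecspan\{\vec v_2,\ldots,\vec v_{k-1}\}$. Suppose toward a contradiction that $\vec e_{k-1}=\sum_{j=2}^{k-1}c_j\vec v_j$. Only $\vec v_{k-1}$ contributes to the $(k-1)$-th coordinate (all earlier $\vec v_j$ have that coordinate equal to zero), so $1=c_{k-1}\xi_{k-2}$, forcing $\xi_{k-2}\neq 0$ and $c_{k-1}\neq 0$. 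Let $P$ be the orthogonal projection onto $\vecspan\{\vec e_1,\ldots,\vec e_{k-2}\}$; applying $P$ to the relation yields $\vec 0=\sum_{j=2}^{k-2}c_j\vec v_j+c_{k-1}P\vec v_{k-1}$, since $P\vec e_{k-1}=\vec 0$ and $P\vec v_j=\vec v_j$ for $j\leq k-2$. But $P\vec v_{k-1}$ has norm $\rho_{k-2}>0$ and is orthogonal to $\vec v_2,\ldots,\vec v_{k-2}$ (these orthogonalities descend from $\C^n$ because $(\vec v_j)_{k-1}=0$ for $j\leq k-2$), so $c_{k-1}P\vec v_{k-1}$ is a nonzero vector outside $\vecspan\{\vec v_2,\ldots,\vec v_{k-2}\}$ and cannot be canceled by the first sum, a contradiction. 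This closes the induction.
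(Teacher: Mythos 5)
Your proof is correct, and its skeleton is essentially the paper's: both arguments build the matrix inductively, at each step appending a new column of the form (vector of norm $\sqrt{1-|\xi|^2}$ orthogonal to the previously constructed columns) sitting above the diagonal entry $\xi$; your unit vector $\vec{w}_k$ is exactly the spanning vector of the one-dimensional orthogonal complement that the paper uses when it picks $\vec{v}\in(\ran V)^{\perp}$. The genuine difference is how the nonvanishing of the new superdiagonal entry is verified. The paper argues by contradiction with a quick dimension count: if that entry vanished, the upper-right $(n-2)\times(n-1)$ block of the enlarged matrix would contain $n-1$ pairwise orthogonal nonzero columns, which is impossible. You instead reduce the claim to $\vec{e}_{k-1}\notin\vecspan\{\vec{v}_2,\ldots,\vec{v}_{k-1}\}$ and exclude this by projecting onto $\vecspan\{\vec{e}_1,\ldots,\vec{e}_{k-2}\}$ and exploiting the explicit form $\vec{v}_{k-1}=\rho_{k-2}\vec{w}_{k-1}+\xi_{k-2}\vec{e}_{k-1}$ with $\rho_{k-2}>0$ from the previous step. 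Both verifications are sound; the paper's is shorter, while yours exposes the mechanism more explicitly (in particular it handles the case $\xi_{k-2}=0$ transparently, where the relation $1=c_{k-1}\xi_{k-2}$ already fails). One harmless inaccuracy: Example~\ref{Example:N0} gives only the ``if'' direction --- a matrix $[\vec{0}\,\,N]$ can be a partial isometry without $N$ having orthonormal columns (e.g.\ if $N$ has a zero column) --- but you only use the ``if'' direction, and condition (b) of the lemma demands orthonormal columns in any case, so nothing is affected.
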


    \begin{proof}
        We proceed by induction on $n$.  For the base case $n = 2$,
        \begin{equation*}
            V = \minimatrix{ 0}{ \sqrt{1 - | \xi_1|^2} }{0}{ \xi_1}
        \end{equation*}
        is a partial isometry with the desired properties.  For the induction hypothesis, suppose that the lemma holds for some $n$.        
        Suppose that $\xi_1,\xi_2,\ldots, \xi_{n} \in \D$ are given and apply the induction hypothesis to
        $\xi_1,\xi_2,\ldots, \xi_{n-1}$ to obtain an upper-triangular partial isometry $V \in \M_n$ that satisfies (a), (b), and (c).
	Since the first column of $V$ is $\vec{0}$, there is a 
        $\vec{v} \in (\ran V)^{\perp}$ with $\| \vec{v} \| = \sqrt{1 - | \xi_n|^2}$.  Define
        \begin{equation*}
        V' = \minimatrix{V}{\vec{v}}{\vec{0}^{\mathsf{T}}}{\xi_n} \in \M_{n+1}.
        \end{equation*}
        Then $V'$ has diagonal $(0,\xi_1,\xi_2,\ldots, \xi_{n})$.  Its first column is $\vec{0}$ and its final $n$ columns are orthonormal, 
        so $V'$ is a partial isometry.
        The entries of $V$ on the first superdiagonal are nonzero, so all of the entries of $V'$ on its first superdiagonal are
        nonzero, except possibly the $(n,n+1)$ entry.  Suppose toward a contradiction that
        \begin{equation*}
            V' =\left[
            \begin{array}{c|ccccc}
                0 & v_{1,2} & \cdots & v_{1,n-1} & v_{1,n} & v_{1,n+1} \\
                0 & \xi_1 & \cdots & v_{2,n-1} & v_{2,n} & v_{2,n+1} \\[-2pt]
                0 & 0 & \ddots & \vdots & \vdots & \vdots \\
                0 & 0 & \cdots & \xi_{n-1} & v_{n-1,n} & v_{n-1,n+1} \\
                \hline
                0 & 0 & \cdots & 0 & \xi_{n-1} & 0 \\
                0 & 0 & \cdots & 0 & 0 & \xi_n 
            \end{array}
            \right].
        \end{equation*}
        Then the upper right $(n-2) \times (n-1)$ submatrix has $n-1$ orthogonal nonzero columns, which is impossible.
        Thus, each entry on the first superdiagonal of $V'$ is nonzero.  This completes the induction.
    \end{proof}

\begin{Lemma}\label{Lemma:JCFB}
    If $T \in \M_n$ is upper triangular with $\sigma(T) = \{ \lambda\}$,
    and the entries on the first superdiagonal of $T$ are all nonzero, then $T \sim J_n(\lambda)$.
\end{Lemma}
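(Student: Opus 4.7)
The plan is to reduce the statement to computing the geometric multiplicity of $\lambda$, which turns out to be exactly $1$; this immediately forces the Jordan form to consist of a single Jordan block, necessarily of size $n$.

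First I would shift by setting $N = T - \lambda I$. Since $T$ is upper triangular with $\sigma(T) = \{\lambda\}$, the diagonal of $T$ consists entirely of $\lambda$, so $N$ is strictly upper triangular (hence nilpotent) and its first superdiagonal entries $n_{i,i+1} = t_{i,i+1}$ are all nonzero by hypothesis. Since similarity commutes with shifts by scalar matrices, $T \sim J_n(\lambda)$ is equivalent to $N \sim J_n(0)$, and in turn to the Jordan form of $N$ consisting of a single block. Because the algebraic multiplicity of $0$ for $N$ is $n$, a single Jordan block has size $n$, and the number of blocks equals $\dim \ker N$. So it suffices to show $\dim \ker N = 1$.

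Next I would compute $\ker N$ directly by back-substitution. Writing $N = (n_{ij})$ and $N\vec{x} = \vec{0}$, row $n-1$ reads $n_{n-1,n}\, x_n = 0$, which forces $x_n = 0$ since $n_{n-1,n} \neq 0$. Row $n-2$ then reduces to $n_{n-2,n-1}\, x_{n-1} = 0$, giving $x_{n-1}=0$, and an easy downward induction on the row index yields $x_n = x_{n-1} = \cdots = x_2 = 0$, leaving $x_1$ arbitrary. Thus $\ker N = \vecspan\{\vec{e}_1\}$ is one-dimensional.

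Combining these two steps, $N$ has a unique Jordan block at eigenvalue $0$, which must have size $n$, so $N \sim J_n(0)$ and therefore $T \sim J_n(\lambda)$. There is no real obstacle here: the content of the lemma is precisely that the nonvanishing of the first superdiagonal forces the geometric multiplicity to collapse to $1$, and the back-substitution is routine.
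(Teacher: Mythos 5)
Your proof is correct and follows essentially the same route as the paper: the paper notes that the nonzero superdiagonal forces $\rank(T-\lambda I)=n-1$, while you equivalently show $\dim\ker(T-\lambda I)=1$ by back-substitution, and both conclude that the Jordan form is a single block $J_n(\lambda)$. The extra detail you supply (the explicit kernel computation and the shift $N=T-\lambda I$) just fills in what the paper leaves implicit.
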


\begin{proof}
    The superdiagonal condition ensures that $\rank (T - \lambda I) = n-1$ since the reduced row echelon form of $T - \lambda I$
    has exactly $n-1$ leading ones.
      Thus, the Jordan canonical form of $T$
    is $J_n(\lambda)$.
\end{proof}

      The following lemma is \cite[Theorem 2.4.6.1]{HornJohnson}:
	
	\begin{Lemma}\label{Lemma:Jordan}
		Suppose that $T = [T_{ij}]_{i,j}^{d}\in \M_n$ is block upper triangular,
		and each $T_{ii} \in \M_{n_{i}}(\C)$ is upper triangular with all diagonal entries equal to $\lambda_i$.
		If $\lambda_i \neq \lambda_j$
		for $i \neq j$, then $T \sim T_{11} \oplus T_{22} \oplus \cdots \oplus T_{dd}$.
	\end{Lemma}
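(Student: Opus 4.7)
The plan is to proceed by induction on $d$. The base case $d = 1$ is trivial. For the inductive step, partition $T$ compatibly as
\[
T = \begin{bmatrix} T_{11} & C \\ 0 & T' \end{bmatrix},
\]
where $T' = [T_{ij}]_{i,j=2}^{d}$ is block upper triangular with diagonal blocks $T_{22}, \ldots, T_{dd}$, and $C$ collects the remaining entries of the first block row. Because each $T_{ii}$ is upper triangular with all diagonal entries equal to $\lambda_i$, we have $\sigma(T_{11}) = \{\lambda_1\}$ and $\sigma(T') = \{\lambda_2, \ldots, \lambda_d\}$, and these sets are disjoint by hypothesis.

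The key tool is the Sylvester equation: if $A \in \M_p$ and $B \in \M_q$ have disjoint spectra, then for any $D \in \M_{p \times q}$ there is a unique $X \in \M_{p \times q}$ with $AX - XB = D$. Applying this with $A = T_{11}$, $B = T'$, and $D = -C$ produces an $X$ such that $T_{11} X - X T' = -C$. A direct block computation then gives
\[
\begin{bmatrix} I & -X \\ 0 & I \end{bmatrix} \begin{bmatrix} T_{11} & C \\ 0 & T' \end{bmatrix} \begin{bmatrix} I & X \\ 0 & I \end{bmatrix} = \begin{bmatrix} T_{11} & 0 \\ 0 & T' \end{bmatrix},
\]
so $T \sim T_{11} \oplus T'$.

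The matrix $T'$ inherits the hypotheses of the lemma with $d - 1$ diagonal blocks and still-distinct spectra, so the inductive hypothesis yields $T' \sim T_{22} \oplus \cdots \oplus T_{dd}$. Combining this with the preceding display, $T \sim T_{11} \oplus T_{22} \oplus \cdots \oplus T_{dd}$, as desired. The only substantive ingredient is Sylvester's theorem on disjoint-spectra matrix equations; the block similarity and the induction itself are routine, so there is no real obstacle beyond invoking that standard fact.
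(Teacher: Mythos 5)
Your proof is correct. Note that the paper does not actually prove this lemma; it simply cites it as Theorem 2.4.6.1 of Horn and Johnson. Your argument---induction on $d$, splitting off the leading block, and removing the coupling block $C$ via a block similarity $\left[\begin{smallmatrix} I & X \\ 0 & I \end{smallmatrix}\right]$ whose existence rests on the solvability of the Sylvester equation $T_{11}X - XT' = -C$ when $\sigma(T_{11}) = \{\lambda_1\}$ and $\sigma(T') = \{\lambda_2,\ldots,\lambda_d\}$ are disjoint---is precisely the standard proof of that cited result, and all the steps check out: the spectra are disjoint by hypothesis, the block computation is right, and $T'$ inherits the hypotheses for the inductive step. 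So your proposal supplies a complete, self-contained proof of a fact the paper only quotes, using the expected route.
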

	
We are now ready for the proof of Theorem \ref{Theorem:SimilarPI}.	

\begin{proof}[Proof of Theorem \ref{Theorem:SimilarPI}]
($\Rightarrow$) Since conditions (a), (b), and (c) of Theorem \ref{Theorem:SimilarPI}
are preserved by similarity, it suffices to show that all three conditions are satisfied by any partial isometry.
Conditions (a) and (b) are implied by Proposition \ref{Proposition:Spectrum} and
Theorem \ref{Theorem:CNU}, respectively, so we focus on (c).
Suppose that $A \in \M_n$ is a partial isometry with $\rank A = r$.  Then $A = UP$, in which $U$ is unitary and $P$ is an orthogonal
projection of rank $r$ (Theorem \ref{Theorem:Polar}).  If $\lambda \in \D$, then the unitarity of $U$ ensures that $U - \lambda I$ is invertible and hence
\begin{align*}
    n 
    &= \rank( U - \lambda I)  \\
    &=\rank\big( (UP - \lambda I) + U(I-P) \big)  \\ 
    &\leq \rank(A - \lambda I) + \rank(I-P) \\
    &= \rank(A- \lambda I) + n-r.
\end{align*}
Thus, $\rank A \leq \rank(A - \lambda I)$ from which (c) follows.
\smallskip

\noindent ($\Leftarrow$)
Suppose that $A \in \M_n$ satisfies conditions (a), (b), and (c) of Theorem \ref{Theorem:SimilarPI}.
Condition (b) ensures that
$A \sim A' \oplus U$, in which $\sigma(A') \subset \D$ and 
$U$ is a diagonal matrix whose eigenvalues are on $\T$ (either summand may be vacuous).
Then $U$ is unitary, so $A$ is similar to a partial isometry if and only if $A'$ is.

Without loss of generality, we may assume that $\sigma(A) \subset \D$.
Proposition \ref{Proposition:Spectrum} ensures that $0 \in \sigma(A)$.
Then $m = \nullity A$ is the number of Jordan blocks for the eigenvalue $0$
in the Jordan canonical form of $A$.  Moreover, condition (c) implies that the Jordan canonical form
of $A$ has at most $m$ Jordan blocks for any nonzero eigenvalue of $A$.
Thus, it suffices to show that any matrix of the form
\begin{equation}\label{eq:ASDSMFB}
B = J_{n_0}(0) \oplus \bigoplus_{i=1}^d J_{n_i}(\lambda_i),\qquad d \geq 0, \quad 0 < n_i \leq n,
\end{equation}
in which 
$\lambda_1,\lambda_2,\ldots, \lambda_d \in \D \backslash\{0\}$ are distinct, is similar to a partial
isometry.  This is because $A$ is similar to a direct sum of matrices of the form \eqref{eq:ASDSMFB}.

Lemma \ref{Lemma:Superdiagonal} ensures that there exists
a partial isometry $V \in \M_n$ 
with nonzero entries on its first superdiagonal and
whose diagonal entries are
\begin{equation*}
\underbrace{ 0,0,\ldots,0}_{\text{$n_0$ times}}, \underbrace{ \lambda_1,\lambda_1,\ldots,\lambda_1}_{\text{$n_1$ times}}, 
\underbrace{\lambda_2,\lambda_2,\ldots,\lambda_2}_{\text{$n_2$ times}},\ldots,
\underbrace{\lambda_d,\lambda_d,\ldots,\lambda_d}_{\text{$n_d$ times}},
\end{equation*}
in that order.  
Partition $V$ with respect to $B$, that is, so that $V_{i,j} \in \M_{n_i \times n_j}$.
Then Lemma \ref{Lemma:JCFB} implies that $V_{i,i} \sim J_{n_i}(\lambda_i)$ and hence
$V \sim B$ by Lemma \ref{Lemma:Jordan}.  
\end{proof}

\section{Unitary similarity}\label{Section:UnitarySimilar}
In this section we consider several questions
connected to partial isometries and unitary similarity.  Recall that $A,B \in \M_n$ are unitarily similar if
$A = UBU^*$ for some unitary $U \in \M_n$.  This relationship is denoted $A \cong B$.

\subsection{Partial isometric extension of a contraction}\label{Section:ExtensionPI}
Suppose that $A\in \M_n$ is a contraction, that is, $\norm{A} \leq 1$.
Then $I - A^*A$ is positive semidefinite and has a unique positive semidefinite
square root, denoted $(I - A^*A)^{1/2}$.  We follow Halmos and McLaughlin \cite{Halmos} and define
\begin{equation*}
M(A) = \minimatrix{A}{(I - AA^*)^{1/2}}{0}{0} \in \M_{2n},
\end{equation*}
which is a partial isometry since
\begin{equation}\label{eq:MAMA}
M(A)M(A)^* = \minimatrix{A}{(I - AA^*)^{1/2}}{0}{0}\minimatrix{A^*}{0}{(I - AA^*)^{1/2}}{0}
= \minimatrix{I}{0}{0}{0}
\end{equation}
is an orthogonal projection.  Thus, every contraction is the restriction of a partial isometry
to an invariant subspace.
The matrix $M(A)$ is relevant to the unitary similarity problem for contractions.

 
 

\begin{Theorem}\label{Theorem:M(A)}
Let $A, B \in \M_n$ be contractions.  Then $A \cong B$ if and only if $M(A) \cong M(B)$.
\end{Theorem}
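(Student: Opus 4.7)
My plan is to prove the two directions separately, with the forward direction being almost immediate and the reverse direction relying on a rigidity argument that forces the intertwining unitary to be block diagonal.

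For the forward direction, suppose $A = UBU^*$ for some unitary $U \in \M_n$. Since $I - AA^* = U(I - BB^*)U^*$ and the positive semidefinite square root of a positive semidefinite matrix is unique, we have $(I - AA^*)^{1/2} = U(I - BB^*)^{1/2} U^*$. I then set $W = U \oplus U \in \M_{2n}$, which is unitary, and a direct block computation shows $W M(B) W^* = M(A)$.

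For the reverse direction, the key observation is the rigid structure from \eqref{eq:MAMA}: both $M(A)M(A)^*$ and $M(B)M(B)^*$ equal the orthogonal projection $P := I_n \oplus 0_n$. If $M(A) = W M(B) W^*$ for some unitary $W \in \M_{2n}$, then
\begin{equation*}
P = M(A)M(A)^* = W M(B)M(B)^* W^* = W P W^*,
\end{equation*}
so $W$ commutes with $P$. Since $P$ is the orthogonal projection onto $\C^n \oplus \{0\}$, this forces $W$ to preserve both $\ran P$ and $\ker P$, so $W = W_1 \oplus W_2$ with $W_1, W_2 \in \M_n$ unitary.

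Finally, I compare the $(1,1)$ blocks of the equation $M(A) = (W_1 \oplus W_2) M(B) (W_1 \oplus W_2)^*$. A direct block multiplication gives
\begin{equation*}
M(A) = \minimatrix{W_1 B W_1^*}{W_1 (I - BB^*)^{1/2} W_2^*}{0}{0},
\end{equation*}
and matching the top-left block with that of $M(A)$ yields $A = W_1 B W_1^*$, i.e., $A \cong B$. The main obstacle (mild as it is) is the block-diagonal reduction of $W$; once that step is in hand, the result falls out by equating entries. Note that the $(1,2)$ block then automatically gives a compatible relation between the square roots, consistent with the forward direction.
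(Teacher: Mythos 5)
Your proof is correct, and its reverse direction takes a genuinely different route from the paper. The forward direction is essentially the paper's argument, with uniqueness of the positive semidefinite square root standing in for the polynomial functional calculus step. For the converse, the paper computes $A \oplus 0_n = M(A)^2M(A)^* \cong M(B)^2M(B)^* = B \oplus 0_n$ and then removes the zero summands by comparing traces of all words $w(x,y)$ and appealing to Specht's theorem, a step the paper itself flags as inherently finite-dimensional. You instead exploit the rigidity in \eqref{eq:MAMA}: any unitary $W$ with $M(A) = WM(B)W^*$ must commute with $M(A)M(A)^* = M(B)M(B)^* = I_n \oplus 0_n$, hence is block diagonal $W_1 \oplus W_2$, and the $(1,1)$ block of the conjugation gives $A = W_1 B W_1^*$. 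This is more elementary (no trace identities, no Specht), exhibits the intertwining unitary explicitly, and in effect completes the Halmos--McLaughlin block argument the paper sketches after its Remark: there the block $Z$ is shown to vanish only when $A$ is invertible or a strict contraction, whereas your commutation step (equivalently, adding $ZAA^* = 0$ to $Z(I - AA^*) = 0$) gives $Z = 0$ for every contraction. One point worth flagging: your argument never uses finite-dimensionality, which seems to clash with the paper's Remark that the converse fails for $A = I$ and $B = I \oplus 0$ on an infinite-dimensional space. There is no real conflict, because that example does not satisfy the hypothesis: $M(I)$ is a self-adjoint idempotent, while $M(I \oplus 0)$ is not even idempotent (its square has zero in the $(1,2)$ block), and idempotency is preserved by unitary similarity, so $M(A) \cong M(B)$ fails there. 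Thus your proof stands as written and in fact yields the statement without any dimension restriction.
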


\begin{proof}
Let $A,B \in \M_n$ be contractions.

\medskip\noindent($\Rightarrow$) Suppose that $A \cong B$.
Then $UA = BU$ for some unitary $U \in \M_n$ and hence
$U(AA^*) = (BB^*)U$.  In particular, $AA^*$ and $BB^*$
have the same eigenvalues, all of them nonnegative, with the same multiplicities.
If $p$ is a polynomial such that $p(\lambda) = (1 - \lambda)^{1/2}$ for 
each such eigenvalue, then
\begin{equation*}
p(AA^*) = (I - AA^*)^{1/2} \qquad \text{and} \qquad
p(BB^*) = (I - BB^*)^{1/2}.  
\end{equation*}
Thus,
\begin{equation*}
U(I - AA^*)^{\frac{1}{2}} = Up(AA^*) =p(UAA^*) =  p(BB^*U) = p(BB^*)U = (I - BB^*)^{\frac{1}{2}}U.
\end{equation*}
A computation then confirms that $(U \oplus U)M(A) = M(B)(U \oplus U)$.

\medskip\noindent($\Leftarrow$) 
If $M(A) \cong M(B)$, then \eqref{eq:MAMA} ensures that
\begin{equation*}
A \oplus 0_n = M(A)^2 M(A)^* \cong M(B)^2M(B)^* = B \oplus 0_n.
\end{equation*}
For any word $w(x,y)$ in two noncommuting variables,
\begin{align*}
\tr w(A,A^*) 
&= \tr w\big( A\oplus 0_n, (A\oplus 0_n)^*\big) \\
&= \tr w\big( B\oplus 0_n, (B\oplus 0_n)^*\big) \\
&= \tr w(B,B^*).
\end{align*}
A well-known theorem of Specht ensures that $A \cong B$ \cite{Specht}.\footnote{Pearcy showed that it suffices to consider 
words of total degree $2n^2$ \cite{Pearcy2}.  For $n=3$ and $n=4$, much better results are known \cite{Djokovic, Pearcy, Sibirskii}; see Section \ref{Section:LowDimensions}.}
\end{proof}

\begin{Remark}
The proof that $M(A) \cong M(B)$ implies $A \cong B$ provided in Theorem \ref{Theorem:M(A)}
is inherently finite dimensional because of its reliance on Specht's theorem \cite{Specht}.  This is to be expected,
since the result fails for operators on infinite-dimensional Hilbert spaces:  let $A = I$ and $B = I \oplus 0$.
Then $M(A) \cong I \oplus 0 \cong M(B)$, but $A$ and $B$ are not unitarily similar.
\end{Remark}

The forward implication of Theorem \ref{Theorem:M(A)} is due to Halmos and McLaughlin \cite[Thm.~1]{Halmos}.
They proved the converse under the assumption that $A$ or $B$ is invertible.  A similar method 
applies if $A$ or $B$ is a strict contraction.  Here is the argument.  
Let $M(A) \cong M(B)$ and
suppose without loss of generality that $A$ is invertible or a strict contraction.
Then $UM(A) = M(B)U$ for some unitary
\begin{equation*}
U = \minimatrix{X}{Y}{Z}{W} \in \M_{2n},
\end{equation*}
in which $X,Y,Z,W \in \M_n$.  Thus,
\begin{equation}\label{eq:XAZ}
\minimatrix{XA}{X(I - AA^*)^{\frac{1}{2}}}{ZA}{Z(I - AA^*)^{\frac{1}{2}}}
 =\minimatrix{BX+(I - BB^*)^{\frac{1}{2}}Z}{BY+(I - BB^*)^{\frac{1}{2}}W}{0}{0}.
\end{equation}
If $A$ is invertible, we see from the $(2,1)$ entry above
that $Z = 0$.  If $A$ is a strict contraction, then $(I - AA^*)^{1/2}$ is invertible
and we see from the $(2,2)$ entry in \eqref{eq:XAZ} that $Z = 0$.
However,
\begin{equation}\label{eq:UUXYZW}
\minimatrix{I}{0}{0}{I} = 
I = U^*U = \minimatrix{X^*}{0}{Y^*}{W^*} \minimatrix{X}{Y}{0}{W} 
=\minimatrix{X^*X}{X^*Y}{Y^*X}{Y^*Y + W^*W}
\end{equation}
and hence $X^*X = I$, that is, $X$ is unitary.  
Since $Z = 0$ in \eqref{eq:XAZ}, we see that $XA = BX$, so $A \cong B$.

A related result about products of matrices is due to Erd\'elyi \cite[Thm.~5]{ErdelyiProduct}.

\begin{Theorem}\label{Theorem:MAMB}
Let $A,B \in \M_n$.   Then $M(A)M(B)$ is a partial isometry if and only if $A$ is a partial isometry.
\end{Theorem}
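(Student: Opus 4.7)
The plan is to reduce the claim to a direct computation of $C := M(A)M(B)$, then check the partial isometry identity $CC^*C = C$ by exploiting the very structure of $M(\cdot)$.

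First I would compute the block product explicitly. Since $M(A)$ and $M(B)$ are both upper block-triangular with zero second block row, multiplying gives
\begin{equation*}
C = M(A)M(B) = \minimatrix{AB}{A(I-BB^*)^{1/2}}{0}{0}.
\end{equation*}
The key observation, and the one doing all the work, is that the block row $X := [\,B \,\,\,(I-BB^*)^{1/2}\,]$ appearing here has orthonormal rows:
\begin{equation*}
XX^* = BB^* + (I-BB^*)^{1/2}(I-BB^*)^{1/2} = BB^* + (I - BB^*) = I.
\end{equation*}

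Next I would compute $CC^*$ by the same mechanism, which collapses beautifully:
\begin{equation*}
CC^* = \minimatrix{AXX^*A^*}{0}{0}{0} = \minimatrix{AA^*}{0}{0}{0}.
\end{equation*}
Hence
\begin{equation*}
CC^*C = \minimatrix{AA^*AB}{AA^*A(I-BB^*)^{1/2}}{0}{0} = \minimatrix{(AA^*A)X}{\phantom{0}}{0}{0},
\end{equation*}
while $C = \minimatrix{AX}{\phantom{0}}{0}{0}$. Therefore $C$ is a partial isometry if and only if $(AA^*A - A)X = 0$.

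The forward direction (if $A$ is a partial isometry) is then immediate from $AA^*A = A$. For the converse, if $(AA^*A - A)X = 0$, I would use the surjectivity of $X\colon \C^{2n} \to \C^n$, which follows from $XX^* = I$: the range of $X$ is all of $\C^n$, and so $AA^*A - A$ must vanish on $\C^n$, giving $AA^*A = A$. I do not anticipate a serious obstacle; the only subtle point is noticing the $XX^* = I$ identity, which replaces what looks like a messy constraint with the clean conclusion that $AA^*A - A$ annihilates everything.
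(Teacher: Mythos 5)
Your proposal is correct and follows essentially the same route as the paper: both compute $M(A)M(B) = \minimatrix{AB}{A(I-BB^*)^{1/2}}{0}{0}$ and exploit the identity $BB^* + (I-BB^*) = I$ to collapse $CC^*$ to $AA^* \oplus 0_n$. The only cosmetic difference is that you verify $CC^*C = C$ directly, using the right-invertibility of $[B\,\,(I-BB^*)^{1/2}]$, whereas the paper invokes the criterion that a matrix is a partial isometry if and only if the product with its adjoint is an orthogonal projection.
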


\begin{proof}
Let $A,B \in \M_n$ and define
\begin{equation*}
M =  M(A) M(B) 
= \minimatrix{AB}{A(I - BB^*)^{\frac{1}{2}}}{0}{0}.
\end{equation*}
Then
\begin{equation*}
MM^*
= \minimatrix{AB}{A(I - BB^*)^{\frac{1}{2}}}{0}{0}\minimatrix{B^*A^*}{0}{(I - BB^*)^{\frac{1}{2}}A^*}{0}
= \minimatrix{AA^*}{0}{0}{0}
\end{equation*}
is an orthogonal projection if and only if $AA^*$ is an orthogonal projection.
Thus, $M$ is a partial isometry if and only if $A$ is (Proposition \ref{Proposition:Basic}).
\end{proof}

\subsection{Unitary and completely non-unitary parts}\label{Section:CNU}
The spectrum of a partial isometry is contained in $\D^-$ (Proposition \ref{Proposition:Spectrum}).
The following theorem concerns a useful decomposition of a partial isometry $A \in \M_n$ that corresponds to the partition
$\sigma(A) = (\sigma(A) \cap \D) \cup (\sigma(A) \cap \T)$ of its spectrum.

\begin{Theorem}\label{Theorem:CNU}
Let $A \in \M_n$ be a partial isometry.  Then $A \cong T \oplus U$, in which 
$T$ is an upper-triangular partial isometry with $\sigma(T) \subset \D$ and $U$ is unitary 
(either summand may be absent).
\end{Theorem}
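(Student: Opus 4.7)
The plan is to peel off a unitary direct summand using a reducing subspace built from the eigenvectors of $A$ corresponding to unit modular eigenvalues, and then apply Schur's theorem to what remains.

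First I would establish an auxiliary fact: if $A$ is a partial isometry and $A\vec{z} = \zeta \vec{z}$ with $\zeta \in \T$ and $\vec{z} \neq \vec{0}$, then $A^*\vec{z} = \bar{\zeta}\vec{z}$. Decompose $\vec{z} = \vec{z}_0 + \vec{z}_1$ with $\vec{z}_0 \in \ker A$ and $\vec{z}_1 \in (\ker A)^{\perp}$. Then Proposition \ref{Proposition:Isometric} gives $\|\vec{z}\| = |\zeta|\|\vec{z}\| = \|A\vec{z}\| = \|A\vec{z}_1\| = \|\vec{z}_1\|$, so $\vec{z}_0 = \vec{0}$ and $\vec{z} \in (\ker A)^{\perp}$. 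By Theorem \ref{Theorem:PI}, $A^*A\vec{z} = \vec{z}$, which rearranges to $A^*\vec{z} = \bar{\zeta}\vec{z}$.

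Next I would let $\mathcal{M}$ be the span of all eigenvectors of $A$ associated to eigenvalues in $\sigma(A)\cap\T$. The auxiliary fact shows that $\mathcal{M}$ is invariant under both $A$ and $A^*$, and therefore $\mathcal{M}^{\perp}$ shares this property. Relative to the orthogonal decomposition $\C^n = \mathcal{M} \oplus \mathcal{M}^{\perp}$, we obtain $A \cong U \oplus T'$, and restricting $AA^*A = A$ to each reducing subspace shows that $U$ and $T'$ are partial isometries. By construction $\sigma(U) \subseteq \T$, so $U$ is invertible and hence unitary by Proposition \ref{Proposition:Basic}(g). Any eigenvector of $T'$ for a $\zeta \in \T$ would lift to an eigenvector of $A$ lying in $\mathcal{M}^{\perp}$, contradicting the definition of $\mathcal{M}$; combined with $\sigma(A) \subseteq \D^-$ from Proposition \ref{Proposition:Spectrum}, this forces $\sigma(T') \subseteq \D$.

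Finally, Schur's theorem provides a unitary $V$ with $V^*T'V = T$ upper triangular, and Proposition \ref{Proposition:Basic}(d) ensures that $T$ is still a partial isometry with $\sigma(T) = \sigma(T') \subseteq \D$. Rearranging summands (itself a unitary similarity) gives $A \cong T \oplus U$. The degenerate cases $\mathcal{M} = \{\vec{0}\}$ (when $\sigma(A) \cap \T = \emptyset$) and $\mathcal{M} = \C^n$ (when $\sigma(A) \subseteq \T$, forcing $A$ to be unitary by Proposition \ref{Proposition:Basic}(g) since $0 \notin \sigma(A)$) are absorbed into the statement. The main obstacle is the auxiliary eigenvector fact; once that is in hand, the reducing-subspace decomposition and the appeal to Schur's theorem are routine.
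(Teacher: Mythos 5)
Your argument is correct, but it takes a genuinely different route from the paper. The paper applies Schur's unitary triangularization directly to $A$, ordering the spectrum so that $A \cong \minimatrix{T}{B}{0}{U}$ with $\sigma(T)\subset\D$ and $U$ upper triangular with unimodular diagonal; it then observes that a contraction has columns of norm at most one, which forces $U$ to be diagonal and $B=0$, and finally reads off $TT^*T=T$ from the block-diagonal form. You instead first prove the auxiliary fact that a unimodular eigenvector of a partial isometry lies in the initial space and is a co-eigenvector ($A^*\vec{z}=\overline{\zeta}\vec{z}$), using Proposition \ref{Proposition:Isometric} and Theorem \ref{Theorem:PI}; this makes the span $\mathcal{M}$ of unimodular eigenvectors a reducing subspace, so the unitary summand can be split off before Schur's theorem is ever invoked, and triangularization is applied only to the completely non-unitary part. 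Both proofs are sound. The paper's is shorter and purely matrix-theoretic, with the column-norm trick doing all the work; yours isolates a lemma of independent interest (unimodular eigenvalues are reducing eigenvalues, the structural reason behind condition (b) of Theorem \ref{Theorem:SimilarPI}) and makes the reducing-subspace mechanism explicit rather than extracting it from the triangular form. Two small glosses you may wish to spell out: that $\sigma(U)\subseteq\T$ because a basis of $\mathcal{M}$ can be extracted from the spanning eigenvectors, so $A|_{\mathcal{M}}$ is diagonalizable with unimodular eigenvalues; and that the restriction of $AA^*A=A$ to a reducing subspace uses $(A|_{\mathcal{M}})^*=A^*|_{\mathcal{M}}$. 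Neither is a gap.
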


\begin{proof}
If $A \in \M_n$ is a partial isometry, then Schur's theorem on unitary triangularization 
implies that
\begin{equation*}
A \cong \minimatrix{T}{B}{0}{U},
\end{equation*}
in which $\sigma(T) \subset \D$ and $U$ is upper-triangular with $\sigma(U) \subset \T$ \cite[Thm.~10.1.1]{GarciaHorn}.  
Since $A$ is a contraction, each of its columns has norm at most $1$.  Since every entry on the main diagonal of $U$
has unit modulus, it follows that $U$ is diagonal and hence $B = 0$.  Thus, $A \cong T \oplus U$, in which 
$U$ is unitary and $\sigma(T) \subset \D$.  Since $AA^*A = A$, we conclude that $TT^*T = T$, so $T$ is a partial isometry.
\end{proof}

The summand $U$ in Theorem \ref{Theorem:CNU} is the \emph{unitary part} of $A$ and the summand $T$ is the \emph{completely non-unitary} (cnu)
part of $A$.  The latter name arises from the fact that there is no reducing subspace upon which $T$ acts unitarily.
Indeed, otherwise $T \cong T' \oplus U'$, in which $U'$ is unitary (and hence $\sigma(U') \subset \T$), 
and this violates the hypothesis that $\sigma(T) \subset \D$.  In particular, a partial isometry is completely non-unitary if and
only if its spectrum lies in $\D$.

\begin{Example}
The partial isometry
\begin{equation*}
\begin{bmatrix}
0 & \frac{\sqrt{3}}{2} & \0 & \0\\[3pt]
0 & \frac{1}{2} & \0 & \0\\
\0 &  \0& -1& 0 \\
 \0&\0  &0 &1\\
\end{bmatrix}
=
\underbrace{
\begin{bmatrix}
0 & \frac{\sqrt{3}}{2} \\[3pt]
0 & \frac{1}{2} \\
\end{bmatrix}
}_T
 \oplus \underbrace{\minimatrix{1}{0}{0}{-1}}_U
\end{equation*}
is a direct sum of a completely non-unitary partial isometry $N$
and a unitary $U$.
\end{Example}

\begin{Corollary}
Let $A \in \M_n$ be a partial isometry.  
Then $A^n \to 0$ if and only if $A$ is completely non-unitary.
\end{Corollary}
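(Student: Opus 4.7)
The plan is to apply Theorem~\ref{Theorem:CNU}, which decomposes $A \cong T \oplus U$ with $T$ upper triangular, $\sigma(T) \subset \D$, and $U$ unitary (either summand may be absent). Since unitary similarity preserves the operator norm and commutes with taking powers, $A^k \to 0$ if and only if both $T^k \to 0$ and $U^k \to 0$. Reading the statement ``$A^n \to 0$'' as convergence of the sequence of powers of $A$ to the zero matrix, the proof then splits into two short arguments.

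For the ``if'' direction, assume $A$ is completely non-unitary, so the unitary summand is absent and $A \cong T$ with $\sigma(T) \subset \D$. Then the spectral radius $\rho(T) < 1$, and Gelfand's formula $\lim_{k \to \infty} \norm{T^k}^{1/k} = \rho(T)$ yields $\norm{T^k} \leq r^k$ for some $r < 1$ and all sufficiently large $k$. Hence $T^k \to 0$, and so $A^k \to 0$. (Alternatively, one may invoke the Jordan form: every block $J_m(\lambda)$ with $|\lambda| < 1$ satisfies $J_m(\lambda)^k \to 0$ by an explicit computation, and $T$ is similar to a direct sum of such blocks.)

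For the ``only if'' direction, suppose $A^k \to 0$ and, toward a contradiction, that the unitary summand $U$ in the decomposition $A \cong T \oplus U$ is nontrivial. Unitarity gives $\norm{U^k \vec{x}} = \norm{\vec{x}}$ for every vector $\vec{x}$ and every $k \geq 1$, so $\norm{U^k} = 1$ for all $k$. Then $\norm{A^k} \geq \norm{U^k} = 1$ for all $k$, contradicting $A^k \to 0$. Hence $U$ must be absent and $A$ is completely non-unitary.

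The proof is essentially automatic once Theorem~\ref{Theorem:CNU} is in hand, so no serious obstacle arises. The only mild care required is the observation that a unitary summand obstructs $A^k \to 0$ because its iterates preserve norm, while the spectrum of the cnu summand lying strictly inside $\D$ forces its iterates to decay via the standard spectral-radius argument.
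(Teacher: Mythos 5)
Your proof is correct. It differs from the paper's in emphasis rather than in substance: the paper's proof is a one-line citation of the general fact that $A^k \to 0$ if and only if $\sigma(A) \subset \D$ (proved via the Jordan canonical form), combined with the remark following Theorem~\ref{Theorem:CNU} that a partial isometry is completely non-unitary precisely when its spectrum lies in $\D$. You instead work explicitly with the decomposition $A \cong T \oplus U$ and treat the two directions separately: the forward (``if'') direction is the same spectral-radius/Jordan argument the paper cites, but your converse replaces the eigenvalue argument implicit in the cited theorem (an eigenvalue $\zeta \in \T$ with eigenvector $\vec{x}$ gives $\norm{A^k \vec{x}} = \norm{\vec{x}}$ for all $k$) by the observation that a nontrivial unitary summand forces $\norm{A^k} \geq \norm{U^k} = 1$. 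Both routes are valid; yours is more self-contained and makes the obstruction visible at the level of the decomposition, while the paper's is more concise because it leans on the standard power-convergence criterion and the already-established spectral characterization of the completely non-unitary part.
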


\begin{proof}
One can use the Jordan canonical form of a matrix to show that $A^n \to 0$ if and only if $\sigma(A) \subset \D$
\cite[Thm.~11.6.6]{GarciaHorn}.
\end{proof}

\subsection{Low dimensions}\label{Section:LowDimensions}

In low dimensions there are simple conditions 
to determine when two partial isometries are unitarily similar.
Although the two-dimensional situation is rather straightforward, we include
it for completeness because it suggests a similar approach in dimensions three and four.
Recall that $p_A(z) = \det(zI -A)$ is the characteristic polynomial of $A \in \M_n$.

\begin{Theorem}\label{Theorem:UE2}
Partial isometries $A,B \in \M_2$ are unitarily similar if and only if
$p_A = p_B$ and $\rank A = \rank B$.
\end{Theorem}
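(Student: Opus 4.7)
The forward direction is immediate: unitary similarity preserves both the characteristic polynomial and the rank. So the work lies in the converse, which I would handle by a case analysis on $r = \rank A = \rank B \in \{0,1,2\}$.

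The cases $r=0$ and $r=2$ are quick. If $r=0$, then $A=B=0$. If $r=2$, then $A$ and $B$ are invertible partial isometries, hence unitary by Proposition~\ref{Proposition:Basic}(g); the spectral theorem makes each unitarily similar to $\diag(\lambda_1,\lambda_2)$, where the $\lambda_i$ are the roots of the common characteristic polynomial, and so $A \cong B$.

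The substantive case is $r=1$. By Theorem~\ref{Theorem:N}, each of $A$ and $B$ is unitarily similar to a matrix of the form $[N \,\, \vec{0}]$ with $N \in \M_{2\times 1}$ a unit vector. Writing
\begin{equation*}
A \cong \minimatrix{a_1}{0}{a_2}{0}, \qquad B \cong \minimatrix{b_1}{0}{b_2}{0},
\end{equation*}
with $|a_1|^2+|a_2|^2 = |b_1|^2+|b_2|^2 = 1$, the characteristic polynomials are $z(z-a_1)$ and $z(z-b_1)$, so the hypothesis $p_A = p_B$ forces $a_1 = b_1$. Conjugating by a diagonal unitary $\diag(1,e^{i\theta})$ sends $a_2 \mapsto e^{i\theta}a_2$ without disturbing the $(1,1)$-entry, so a suitable choice of $\theta$ replaces $a_2$ by $|a_2| = \sqrt{1 - |a_1|^2}$, and likewise for $b_2$. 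Both $A$ and $B$ are therefore unitarily similar to the same canonical representative
\begin{equation*}
\minimatrix{a_1}{0}{\sqrt{1 - |a_1|^2}}{0},
\end{equation*}
completing the proof.

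The main obstacle, such as it is, is checking that in the rank-$1$ case the complex phase of the off-diagonal entry is killed by the residual freedom in the unitary conjugation; once one sees that a diagonal phase unitary suffices, the rest is bookkeeping. Higher-dimensional analogues of this result require genuinely more data (e.g.\ Specht-type trace words), but in $\M_2$ the characteristic polynomial plus rank already exhausts the invariants of a partial isometry under unitary similarity.
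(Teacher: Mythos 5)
Your proof is correct, but it takes a genuinely different route from the paper. The paper reduces the whole theorem to Murnaghan's complete unitary invariant for $\M_2$: $A \cong B$ if and only if $(\tr A, \tr A^2, \tr A^*A) = (\tr B, \tr B^2, \tr B^*B)$, and then observes that for partial isometries the first two coordinates encode $p_A$ while $\tr A^*A = \rank A$ because $A^*A$ is an orthogonal projection; this is deliberately the same template the authors then follow in dimensions three and four with the Pearcy--Sibirski{\u\i} and Djokovi\'c trace lists. You instead build an explicit canonical form: the trivial cases $r=0$ and $r=2$ (the latter via Proposition~\ref{Proposition:Basic}(g) and the spectral theorem), and in the substantive rank-one case the normal form $A \cong \minimatrix{a_1}{0}{a_2}{0}$ with $|a_1|^2+|a_2|^2=1$ from Theorem~\ref{Theorem:N}, where $p_A$ pins down $a_1$ and conjugation by $\diag(1,e^{i\theta})$ rotates $a_2$ to $\sqrt{1-|a_1|^2}$ without touching the $(1,1)$ entry --- a computation that checks out, including the degenerate subcase $a_2=0$. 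Your argument is more elementary and self-contained (no appeal to Murnaghan/Pearcy) and produces a concrete unitary canonical representative for each rank-one partial isometry in $\M_2$, which the paper's proof does not; the paper's argument is shorter and, as you yourself note, is the one whose strategy scales to $\M_3$ and $\M_4$, where a canonical-form analysis would be much harder.
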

 
\begin{proof}
$A,B \in \M_2$ are unitarily similar if and only if $\Phi(A) = \Phi(B)$, in which
$\Phi(x) = (\tr x, \tr x^2, \tr x^*x)$ \cite{Murnaghan}.
Since the trace of a matrix is the sum of its eigenvalues, counted with multiplicity,
$\Phi(A) = \Phi(B)$ occurs for partial isometries $A,B \in \M_n$ if and only if $p_A=p_B$ and $\rank A = \tr A^*A = \tr B^*B = \rank B$.
\end{proof}

The $3 \times 3$ case is slightly more complicated and involves a lemma of
Sibirski{\u\i} \cite{Sibirskii} that
 streamlines an earlier result of Pearcy \cite{Pearcy}.

\begin{Lemma}
$A,B \in \M_3$ are unitarily similar if and only if $\Phi(A) = \Phi(B)$,
in which $\Phi:M_3(\C)\to \C^7$ is 
\begin{equation}\label{eq:Sibirksi}
	\Phi(x) = (\tr x, \, \tr  x^2,\, \tr x^3,\, \tr x^* x,\, \tr x^*x^2, \, \tr {x^*}^2 x^2, \, \tr x^* x^2 {x^*}^2x).
\end{equation}
\end{Lemma}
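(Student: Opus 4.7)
The forward direction is immediate: if $A = UBU^*$ for a unitary $U$, then $w(A,A^*) = U w(B,B^*) U^*$ for every word $w$ in two noncommuting variables, so $\tr w(A,A^*) = \tr w(B,B^*)$. Thus each of the seven coordinates of $\Phi$ is a unitary-similarity invariant.

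For the converse my plan is to reduce to upper-triangular representatives via Schur's theorem, to match diagonals using the first three trace invariants, and then to show that the remaining four invariants pin down the off-diagonal data up to the residual diagonal-unitary freedom. Concretely, Schur triangularization (already invoked in the proof of Theorem \ref{Theorem:CNU}) lets me replace $A$ and $B$ by upper-triangular matrices
\begin{equation*}
A = \begin{bmatrix} \lambda_1 & a_1 & a_3 \\ 0 & \lambda_2 & a_2 \\ 0 & 0 & \lambda_3 \end{bmatrix}, \qquad B = \begin{bmatrix} \mu_1 & b_1 & b_3 \\ 0 & \mu_2 & b_2 \\ 0 & 0 & \mu_3 \end{bmatrix}.
\end{equation*}
Newton's identities recover the eigenvalues from $\tr x,\tr x^2,\tr x^3$, so the multisets $\{\lambda_i\}$ and $\{\mu_i\}$ agree; choosing the Schur orderings consistently, I may assume $\mu_i = \lambda_i$. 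When the $\lambda_i$ are distinct, the only unitaries preserving this triangular shape with fixed diagonal are the diagonal unitaries $D = \diag(1,e^{i\alpha},e^{i\beta})$, whose action sends $(a_1,a_2,a_3)$ to $(e^{-i\alpha}a_1, e^{i(\alpha-\beta)}a_2, e^{-i\beta}a_3)$. Thus $A \cong B$ if and only if $|a_j| = |b_j|$ for $j=1,2,3$ and $\arg(a_1 a_2 \overline{a_3}) = \arg(b_1 b_2 \overline{b_3})$.

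The last four trace words are engineered to recover exactly this data. Expanding $\tr x^* x$ gives $\sum |\lambda_i|^2 + \sum |a_j|^2$, while $\tr x^* x^2$ and $\tr {x^*}^2 x^2$ yield $\lambda_i$-weighted linear combinations of $|a_1|^2, |a_2|^2, |a_3|^2$; in the distinct-eigenvalue case the resulting $3 \times 3$ linear system is invertible and recovers the three magnitudes individually. The final invariant $\tr x^* x^2 {x^*}^2 x$ contains the diagonal-invariant product $a_1 a_2 \overline{a_3}$ and its conjugate as its only phase-sensitive contribution, so it supplies the one remaining phase. The main obstacle will be the degenerate cases of repeated eigenvalues: then the isotropy group of the Schur form grows and the linear system determining the $|a_j|^2$ collapses. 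One must split into sub-cases (two eigenvalues coincide; all three coincide) and verify that the loss of information on the invariant side is matched by the enlarged unitary freedom on the orbit side, so that the seven trace words still separate orbits. This case analysis is the technical heart of Sibirskii's argument and is precisely what prevents one from shortening the list.
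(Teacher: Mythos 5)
There is a genuine gap: you have not actually proved the converse. The paper itself does not prove this lemma either --- it is imported verbatim from Sibirski{\u\i} (streamlining Pearcy) and cited --- so the only question is whether your sketch amounts to a proof, and it does not. You explicitly defer the repeated-eigenvalue cases (``the technical heart''), and for the distinct-eigenvalue case the decisive step --- that the last four trace words determine $|a_1|,|a_2|,|a_3|$ and the diagonal-unitary invariant $a_1a_2\overline{a_3}$ --- is asserted rather than established, and the assertion as stated is inaccurate. For your triangular form one computes
\begin{equation*}
\tr A^*A^2=\sum_i |\lambda_i|^2\lambda_i+|a_1|^2(\lambda_1+\lambda_2)+|a_2|^2(\lambda_2+\lambda_3)+|a_3|^2(\lambda_1+\lambda_3)+a_1a_2\overline{a_3},
\end{equation*}
so the cross term $a_1a_2\overline{a_3}$ already contaminates the quantity you wanted to treat as a clean $\lambda$-weighted linear system in the $|a_j|^2$; likewise $\tr {A^*}^2A^2=\|A^2\|_F^2$ contains $|a_3(\lambda_1+\lambda_3)+a_1a_2|^2$, again mixing magnitudes with the phase. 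Moreover the seventh word satisfies $\tr A^*A^2{A^*}^2A=\tr\big((AA^*)A^2(A^2)^*\big)$, which is real and nonnegative, so it cannot by itself ``supply the one remaining phase''; at best it contributes a real part of eigenvalue-weighted combinations, and the bookkeeping of which real and imaginary parts recover $\arg(a_1a_2\overline{a_3})$ has to be done honestly (and shown to be solvable), not just counted.

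Your framing of the orbit side is fine: with distinct eigenvalues in a fixed Schur order the residual freedom is indeed conjugation by diagonal unitaries, and the complete invariants of that action are $|a_1|,|a_2|,|a_3|$ and $a_1a_2\overline{a_3}$ (though you should also account for the freedom to reorder the diagonal). But to turn the plan into a proof you must (i) carry out the nonlinear elimination showing the seven traces determine these quantities when the eigenvalues are distinct, with the cross terms treated correctly, and (ii) handle the degenerate spectra, which is exactly the part of Pearcy's and Sibirski{\u\i}'s work you have postponed. As it stands, the proposal verifies only the easy implication.
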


If $A$ is a partial isometry, then $P_{A^*} = A^*A$ and $P_A = AA^*$ are the orthogonal projections
onto the initial and final spaces of $A$, respectively.  That is,
$\ran P_A = \ran A$ and $\ran P_{A^*} = \ran A^*$.  To extend
Theorem \ref{Theorem:UE2} to $3 \times 3$ matrices, we require an additional condition
concerning such projections.

\begin{Theorem}
Partial isometries $A,B \in \M_3$ are unitarily similar if and only if
\begin{enumerate}
\item $p_A = p_B$,
\item $\rank A = \rank B$, and
\item $\tr P_A P_{A^*} = \tr P_B P_{B^*}$.
\end{enumerate}
\end{Theorem}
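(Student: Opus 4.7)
The plan is to invoke Sibirski{\u\i}'s lemma and show that for a partial isometry $A$, each coordinate of the seven-tuple $\Phi(A)$ in \eqref{eq:Sibirksi} is already determined by $p_A$, $\rank A$, and $\tr(P_A P_{A^*})$. The forward implication is immediate: if $A = UBU^*$, then $p_A = p_B$ and $\rank A = \rank B$, while conjugation by $U$ sends the projections associated with $B$ to those associated with $A$ (that is, $P_A = U P_B U^*$ and $P_{A^*} = U P_{B^*} U^*$), so $\tr(P_A P_{A^*}) = \tr(P_B P_{B^*})$ by cyclic invariance.

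For the reverse direction I would dispatch the first four coordinates of $\Phi$ quickly. Newton's identities applied to $p_A$ recover $\tr A$, $\tr A^2$, and $\tr A^3$, handling coordinates one through three. Theorem \ref{Theorem:PI} tells us that $A^*A = P_{A^*}$ is the orthogonal projection onto $(\ker A)^\perp$, so $\tr(A^*A) = \rank A$, handling coordinate four.

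For the remaining three mixed traces I would lean on the defining identity $AA^*A = A$ together with cyclic invariance. Setting $P := P_{A^*} = A^*A$ and $Q := P_A = AA^*$, the key reductions are
\begin{align*}
\tr(A^* A^2) &= \tr(AA^*A) = \tr A,\\
\tr({A^*}^2 A^2) &= \tr(AA^*A^*A) = \tr(QP),\\
\tr(A^* A^2 {A^*}^2 A) &= \tr\bigl((A^*A)(AA^*)(A^*A)\bigr) = \tr(PQP) = \tr(QP^2) = \tr(QP),
\end{align*}
where at the last step I use $P^2 = P$. Consequently $\Phi(A)$ is fully determined by $p_A$, $\rank A$, and $\tr(P_A P_{A^*}) = \tr(QP)$, so the three hypotheses force $\Phi(A) = \Phi(B)$ and Sibirski{\u\i}'s lemma concludes $A \cong B$.

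The content of the theorem is really the observation that Sibirski{\u\i}'s seven unitary invariants collapse to three when specialized to partial isometries, so I do not anticipate a substantive obstacle; the trickiest part is simply keeping $P_A$ and $P_{A^*}$ straight, since they are in general distinct projections, but the identity $AA^*A = A$ and cyclic invariance of $\tr$ do all the heavy lifting.
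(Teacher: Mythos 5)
Your proposal is correct and follows essentially the same route as the paper: invoke the Pearcy--Sibirski{\u\i} criterion and use the identity $AA^*A = A$ together with cyclic invariance of the trace to show the seven invariants in \eqref{eq:Sibirksi} collapse to $p_A$, $\rank A = \tr A^*A$, and $\tr (A^*A)(AA^*)$. Your trace reductions for the three mixed words match the paper's computations exactly, and the explicit forward direction you include is the standard unitary-invariance argument the paper leaves implicit.
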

	
\begin{proof}
Suppose that $x \in \M_3$ is a partial isometry.
Observe that $(\tr x, \tr x^2, \tr x^3)$ is uniquely determined by the characteristic polynomial of $x$
and that $\rank x = \tr x^*x$.  The invariance of the trace under cyclic permutations of its argument
ensures that 
\begin{align*}
\tr x^*x^2 &= \tr xx^*x = \tr x, \\
\tr {x^*}^2 x^2 &= \tr (x^*x)(xx^*), \quad\text{and}\\
\tr x^* x^2 {x^*}^2x &= \tr (x^*x)(xx^*)(x^*x) = \tr (x^*x)^2(xx^*) = \tr (x^*x)(xx^*).
\end{align*}
Thus, partial isometries $A,B \in \M_3$ are unitarily similar if and only if
conditions (a), (b), and (c) holds.
\end{proof}

In 2007, Djokovi\'c extended the 
Pearcy-Sibirski{\u\i} trace conditions to four dimensions and obtained a complete unitary invariant \cite[Thm.~4.4]{Djokovic}.

\begin{Lemma}
$A,B \in \M_4$ are unitarily similar if and only if 
\begin{equation*}
\tr w_i(A,A^*) = \tr w_i(B,B^*)
\end{equation*}
for $i = 1,2,\ldots,20$, in which the words $w_i(x,y)$ are 
	\begin{multicols}{4}\renewcommand{\labelenumi}{(\arabic{enumi})}
		\begin{enumerate}
			\item $x$
			\item $x^2$
			\item $xy$
			\item $x^3$
			\item $x^2 y$
			\item $x^4$
			\item $x^3 y $
			\item $x^2 y^2$
			\item $xyxy$
			\item $x^3 y^2$
			\item $x^2yx^2y$
			\item $x^2 y^2 xy$
			\item $y^2 x^2 y x$
			\item $x^3 y^2 xy$
			\item $x^3 y^2 x^2 y$
			\item $x^3 y^3 xy$
			\item $y^3 x^3 y x$
			\item $x^3 y x^2 yxy$
			\item $x^2 y^2 x y x^2 y$
			\item $x^3 y^3 x^2 y^2$.
		\end{enumerate}
	\end{multicols}
\end{Lemma}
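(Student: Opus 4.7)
The forward direction is immediate: if $A \cong B$ via a unitary $U$, then $U w(A,A^*) U^* = w(B,B^*)$ for every word $w$, so the traces coincide. Thus the entire content lies in the converse, and my strategy would be to combine Specht's theorem with an invariant-theoretic reduction that prunes the (a priori infinite) list of trace words down to these 20. Specht guarantees that matching \emph{all} traces of words in $A$ and $A^*$ implies unitary similarity, so the task reduces to showing that once the 20 listed traces agree, every trace $\tr w(A,A^*)$ must agree.

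I would first set up the framework of polynomial $U(4)$-invariants on $\M_4$. Viewing $A$ and $A^*$ as formally independent matrices, the Procesi--Razmyslov theorem tells us that the algebra of $U(4)$-invariant polynomial functions on such pairs is generated by traces of monomials in $x$ and $y$. Thus the problem becomes: express the trace of every word in $x,y$ as a polynomial in the 20 listed traces. Cyclic invariance of the trace lets me first replace each word by a cyclic representative, which already cuts the count substantially and groups several of the listed words with their cyclic variants.

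To bound word length, the key tool is the Cayley--Hamilton theorem for $4 \times 4$ matrices: $A^4$ is a linear combination of $I, A, A^2, A^3$ whose coefficients are polynomials in $\tr A, \tr A^2, \tr A^3, \tr A^4$ via Newton's identities, and likewise for $A^*$. Substituting these into any long word and expanding produces a sum of traces of shorter words. A polarized (multilinear) form of Cayley--Hamilton then handles mixed monomials: applying it to $aA+bA^*$ and comparing coefficients of $a^i b^j$ yields reduction rules for arbitrary products. Iterating reduces any trace to a polynomial in traces of words of bounded degree, after which one must sift through these ``short'' traces and confirm that only the 20 listed ones are independent.

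The main obstacle is precisely this last step: verifying that the candidate set of size 20 is both spanning and minimal. This is essentially a calculation inside a specific bigraded algebra whose Hilbert series is known from the general theory of matrix invariants; Djokovi\'c's proof exploits that graded structure together with bidegree matching to eliminate candidate words one by one. Absent a genuinely new idea, I would expect to complete the reduction only with computer-algebra assistance: enumerate all cyclic words of bidegree up to the Cayley--Hamilton bound, apply the reduction rules above, and check in each bidegree that the span of the 20 listed traces already exhausts the invariant space. The sheer combinatorics of that bookkeeping is what makes the four-dimensional case so much more intricate than the two- and three-dimensional ones treated earlier in this section.
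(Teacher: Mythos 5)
The survey does not actually prove this lemma: it is quoted from Djokovi\'c \cite[Thm.~4.4]{Djokovic}, so the paper's only ``proof'' is a citation. Your proposal correctly identifies the standard framework: the forward direction is trivial, and by Specht's theorem (equivalently, the first fundamental theorem for unitary invariants, which you attribute to Procesi--Razmyslov) it suffices to show that the trace of every word in $x=A$, $y=A^*$ is determined by the twenty listed traces. You also name the right reduction tools, namely cyclic invariance of the trace and the polarized Cayley--Hamilton identity for $4\times 4$ matrices, which bounds the degrees of the words one must consider.

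However, there is a genuine gap: the entire content of the lemma is the verification that these \emph{specific} twenty words suffice, and your proposal explicitly leaves that step undone, deferring it to ``computer-algebra assistance'' and an unspecified sifting of short words. Nothing in your argument shows that the twenty listed traces generate (over $\R$, keeping track of the fact that $\overline{\tr w(A,A^*)}$ is the trace of the reversed word with $x$ and $y$ exchanged) the algebra of all word traces, nor do you pin down the degree bound at which the enumeration may stop or confirm spanning in each bidegree; minimality, which you also mention, is not even needed for the statement. As written this is a plausible plan for rediscovering Djokovi\'c's computation rather than a proof of it; to close the gap you would either have to carry out that invariant-theoretic calculation in full or, as the survey does, simply cite \cite[Thm.~4.4]{Djokovic}.
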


Using with Djokovi\'c's result, we obtain a complete unitary invariant for $4 \times 4$ partial isometries.

\begin{Theorem}
Partial isometries $A,B \in \M_4$ are unitarily similar if and only if
\begin{enumerate}
\item $p_A = p_B$,
\item $\rank A = \rank B$, 
\item $\tr w(A,A^*) = \tr w(B,B^*)$ for the six words $w(x,y)$ given by
\begin{equation}\label{eq:FinalList}
x^2y^2, \quad x^3y^2 , \quad  x^4y^2, \quad x^3y^3 , \quad x^4y, \quad x^3y^3x^2y^2.
\end{equation}
\end{enumerate}
\end{Theorem}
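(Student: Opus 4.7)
The plan is to apply Djokovi\'c's lemma, reducing its twenty defining trace identities to conditions (a), (b), (c). The forward direction is immediate: if $A \cong B$, then $\tr w(A, A^*) = \tr w(B, B^*)$ for every noncommutative word $w(x, y)$, so in particular $p_A = p_B$ (the power sums $\tr A^k$ for $1 \le k \le 4$ determine $p_A$ via Newton's identities), $\rank A = \tr A^*A = \tr B^*B = \rank B$, and the six trace equalities in (c) hold.

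For the converse, I assume (a), (b), (c) and aim to verify $\tr w_i(A, A^*) = \tr w_i(B, B^*)$ for each of Djokovi\'c's twenty words $w_i$; his lemma then delivers $A \cong B$. The crux is that a partial isometry satisfies the two substitution rules
\begin{equation*}
AA^*A = A \qquad\text{and}\qquad A^*AA^* = A^*,
\end{equation*}
the latter obtained by taking adjoints. Writing $x = A$ and $y = A^*$, these say that any $xyx$ substring inside a word collapses to $x$ and any $yxy$ substring collapses to $y$. Combined with the cyclic invariance of the trace, which permits rotating a word before hunting for a collapsible substring, these rules furnish the engine for reducing each of Djokovi\'c's twenty words.

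The expected reductions go as follows. Djokovi\'c's words 1, 2, 4, 6 are pure powers $x^k$ and their traces are determined by $p_A$. Word 3 is $xy$ with $\tr AA^* = \rank A$. Words 5, 7, 9, 11 each reduce, via one $xyx \to x$ collapse together with a cyclic rotation, to $\tr A$, $\tr A^2$, $\tr AA^*$, and $\tr A^2$ respectively; for example, $\tr A^3 A^* = \tr(AA^*\,A^2) = \tr((AA^*A)\,A) = \tr A^2$. Words 12, 13, 14, 16, 17 each shrink by one $xyx \to x$ or $yxy \to y$ step to one of the six listed words or to a previously handled invariant; for instance, word 16 ($x^3 y^3 xy$) has suffix $yxy$ collapsing to $y$, leaving $x^3 y^3$, and word 13 ($y^2 x^2 yx$) has an interior $xyx$ collapsing to $x$, leaving $y^2 x^2$, with $\tr y^2 x^2 = \tr x^2 y^2$. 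The trickier words are 15, 18, 19. Word 18 ($x^3 y x^2 y xy$) admits two successive $xyx \to x$ collapses, yielding $x^4 y$. Word 15 ($x^3 y^2 x^2 y$) requires a cyclic rotation to expose an $xyx$ substring whose collapse, together with a further cyclic shift, produces $x^4 y^2$. Word 19 ($x^2 y^2 xy x^2 y$) collapses once without rotation to $x^2 y^2 x^2 y$, and then after a cyclic rotation a further $xyx \to x$ yields $x^3 y^2$. Finally, words 8, 10, 20 are themselves members of \eqref{eq:FinalList}. Every Djokovi\'c word is thereby accounted for by (a), (b), (c), so all twenty trace equalities hold and Djokovi\'c's lemma gives $A \cong B$.

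The main obstacle I anticipate is the tedium and error-proneness of the twenty-word case analysis, especially for words 15, 18, 19, where the correct cyclic rotation or repeated collapse is not immediate and a careless step can easily yield the wrong reduction. Once the table of reductions is verified, the theorem follows directly from Djokovi\'c's lemma.
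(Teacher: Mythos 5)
Your proposal is correct and follows essentially the same route as the paper: invoke Djokovi\'c's twenty-word criterion and use the relations $xyx\to x$, $yxy\to y$ together with cyclic invariance of the trace to reduce every word to the characteristic polynomial, the rank, or one of the six listed words, and your specific reductions (including the trickier words 15, 18, 19) all check out. The only cosmetic difference is in bookkeeping, e.g.\ you handle word 18 by two $xyx$ collapses where the paper uses one $xyx$ and one $yxy$, which is equally valid.
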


\begin{proof}
Suppose that $A,B \in \M_4$ are partial isometries, $p_A = p_B$, and $\rank A = \rank B$.
Then $\tr A^k = \tr B^k$ for $k=1,2,3,4$ and hence we need not check words $w_1(x,y) = x$, $w_2(x,y) = x^2$,
$w_4(x,y) = x^3$, and $w_6(x,y) = x^4$ on Djokovi\'c's list.
Since $\tr A^*A = \rank A = \rank B = \tr B^*B$, we can also ignore $w_3(x,y) = xy$.  More words can be proved
redundant when we add the relations $xyx = x$ and $yxy=y$:
\begin{align*}
w_9(x,y) &=xyxy = xy = w_3(x,y), \\
w_{11}(x,y) &=  x(xyx)xy=x^3y = w_7(x,y), \\
w_{12}(x,y) &=x^2y^2xy = x^2 y(yxy) = x^2y^2 = w_8(x,y), \quad\text{and}\\
w_{14}(x,y) &=x^3y^2xy = x^3y(yxy) = x^3y^2 = w_{10}(x,y).
\end{align*}
The invariance of the trace under cyclic permutations yields
\begin{align*}
\tr w_5(x,y) &=\tr x^2y = \tr xyx = \tr x = \tr w_1(x,y),\\
\tr w_7(x,y) &=\tr x^3y = \tr x(xyx) = \tr x^2 = \tr w_2(x,y), \\
\tr w_{13}(x,y) &= \tr y^2x^2yx = \tr x^2(yxy)y = \tr x^2y^2 = \tr w_8(x,y),\\
\tr w_{17}(x,y) &=\tr y^3x^3yx = \tr x^3(yxy)y^2 =  \tr x^3y^3 = \tr w_{16}(x,y),  \quad\text{and}\\
\tr w_{19}(x,y) &= \tr x^2y^2xyx^2y = \tr xy(yxy)x(xyx) = \tr xy^2x^2 = \tr x^3 y^2 = \tr w_{10}(x,y).
\end{align*}
Thus, $w_i(x,y)$ is redundant for $i=1,2,3,4,5,6,7,9,11,12,13,14,17,19$ and we need only consider
the six words $w_i(x,y)$ for $i=8,10,15,16,18,20$.  For some of these, we have simplifications:
\begin{align*}
\tr w_{15}(x,y) &=\tr x^3y^2x^2y =  \tr x^2 y^2 x(xyx) = \tr x^2 y^2 x^2 = \tr x^4 y^2,\\
\tr w_{16}(x,y) &=\tr x^3y^3xy = \tr x^3 y^2(yxy) =  \tr x^3y^3,\\
\tr w_{18}(x,y) &=\tr x^3yx^2yxy = \tr x^2(xyx)x(yxy) = \tr x^4y.
\end{align*}
This yields the list \eqref{eq:FinalList}.
\end{proof}

The words in \eqref{eq:FinalList} can be interpreted more concretely in terms of orthogonal projections.  For example,
if $A \in \M_4$ is a partial isometry, then the trace corresponding to the second word in \eqref{eq:FinalList} is
$\tr A^3 {A^*}^2 = \tr A(AA^*)(A^*A) = \tr P_{A^*} A P_A$.  The interested reader might pursue this further.

\color{black}
\subsection{Defect index one}\label{Section:UnitarySimilarity}
Let $A \in \M_n$ be a partial isometry.  Then 
$\dim \ker A$
is the \emph{defect index} of $A$.  It measures, in a crude sense, the extent to which
$A$ fails to be unitary.  Indeed, if $A$ is unitary, then its defect index is $0$.

The following theorem of Halmos and McLaughlin provides a criterion 
to determine whether two partial isometries with defect index one are unitarily similar
\cite{Halmos}.

\begin{Theorem}\label{Theorem:HalmosUS}
Let $A,B \in \M_n$ be partial isometries with one-dimensional kernels.
Then $A \cong B$ if and only if they have the same characteristic polynomial.
\end{Theorem}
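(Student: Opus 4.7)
The forward implication is immediate, since unitary similarity preserves the characteristic polynomial.

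For the reverse, the first step is to peel off the unitary parts. By Theorem \ref{Theorem:CNU}, write $A \cong T_A \oplus U_A$ and $B \cong T_B \oplus U_B$, in which $T_A, T_B$ are completely non-unitary partial isometries (spectra in $\D$) and $U_A, U_B$ are unitary. Since $\ker U_A = \ker U_B = \{\vec{0}\}$, the defect-one hypothesis forces $\dim\ker T_A = \dim\ker T_B = 1$; and $p_A = p_B$ splits unambiguously into its $\D$-roots and its $\T$-roots, so $p_{T_A} = p_{T_B}$ and $p_{U_A} = p_{U_B}$. Normal matrices with matching spectra are unitarily similar, so $U_A \cong U_B$. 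The problem reduces to showing that two cnu defect-one partial isometries $T, T'$ on $\C^k$ with $p_T = p_{T'}$ are unitarily similar.

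For this, I would place $T$ into a canonical form. Choose a unit vector $\vec{e} \in \ker T$. The cyclic subspace $K = \bigvee_{j\geq 0} T^{*j}\vec{e}$ reduces $T$: it is $T^*$-invariant by definition, and $K^\perp \subset \vec{e}^{\,\perp} = (\ker T)^\perp$, so $T|_{K^\perp}$ is an isometry on a finite-dimensional space, hence unitary, which then makes $K^\perp$ automatically $T^*$-invariant as well. Since $T$ is cnu, $K^\perp = \{\vec{0}\}$ and $\vec{e}$ is cyclic for $T^*$. Applying Gram--Schmidt to the Krylov sequence $\vec{e}, T^*\vec{e}, \ldots, T^{*(k-1)}\vec{e}$ gives an orthonormal basis in which $T$ is lower Hessenberg with a zero first column (because $T\vec{e} = \vec{0}$) and with strictly nonzero superdiagonal entries $T_{i,i+1}$ (by cyclicity). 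Conjugating by a diagonal unitary makes each $T_{i,i+1}$ a positive real number, and the partial-isometry hypothesis translates to orthonormality of columns $2, \ldots, k$.

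The heart of the proof is then showing that this normalized canonical form is uniquely determined by $p_T$. Expanding $\det(zI - T)$ along the first column yields $p_T(z) = z \cdot q(z)$, where $q$ is the characteristic polynomial of the bottom-right $(k-1) \times (k-1)$ block. I would argue inductively, column by column: the unit norm of column $j$, its orthogonality against columns $2, \ldots, j-1$, the positivity of the superdiagonal entry $T_{j-1,j}$, and the known coefficient of $p_T$ that records the trace of the leading principal block, together pin down column $j$ once the earlier columns are fixed.

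The main obstacle is executing this uniqueness step cleanly: cyclicity alone shows that $T^*$ is non-derogatory and hence that $T, T'$ are similar as matrices, but elevating similarity to \emph{unitary} similarity requires that the partial-isometry and positivity constraints leave exactly one valid completion at each inductive stage. A conceptually smoother route, in the spirit of the model-theoretic material in Section \ref{Section:Compressed}, is to realize each such $T$ as the compressed shift $S_B$ on the model space $\mathcal{H}(B) = H^2 \ominus B H^2$, where $B$ is the unique monic degree-$k$ Blaschke product whose zeros (with multiplicity) are the eigenvalues of $T$; then $p_T = p_{T'}$ forces the same $B$, and the functional-model uniqueness gives $T \cong S_B \cong T'$.
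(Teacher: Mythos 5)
Your reductions are fine: peeling off the unitary parts via Theorem \ref{Theorem:CNU} is legitimate (the roots of $p_A$ in $\D$ and on $\T$ separate the two summands, and unitaries with equal characteristic polynomials are unitarily similar), and your observation that a unit vector $\vec{e}\in\ker T$ is cyclic for $T^*$ when $T$ is completely non-unitary is correct, as is the resulting Hessenberg form with positive superdiagonal and orthonormal columns $2,\ldots,k$. But the heart of the theorem --- showing that this canonical form is \emph{determined by the characteristic polynomial} --- is exactly the step you leave unexecuted, and the mechanism you sketch does not work as stated: the coefficients of $p_T$ are symmetric functions of the eigenvalues and do not record the traces (or characteristic polynomials) of the leading principal blocks of a Hessenberg matrix, so there is no ``known coefficient'' available to pin down column $j$ at stage $j$ of your induction. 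As you yourself acknowledge, orthonormality, positivity, and similarity-type information give at best that $T$ and $T'$ are similar; promoting this to unitary similarity is the entire content of the theorem, and it is missing.

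The fallback you propose is circular in the context of this paper: the statement that every completely non-unitary defect-one partial isometry with prescribed spectrum is unitarily similar to the compressed shift matrix \eqref{eq:modelHM} is Proposition \ref{Proposition:ShiftPI}, which the paper \emph{deduces from} Theorem \ref{Theorem:HalmosUS}; invoking ``functional-model uniqueness'' without an independent argument (e.g.\ a genuine Liv\v{s}ic or Sz.-Nagy--Foia\c{s} model-theoretic proof) simply restates what is to be proved. For comparison, the paper's proof avoids canonical forms and models altogether: it inducts on $n$ using Schur triangularization with the eigenvalue $0$ placed in the $(1,1)$ position, checks that deleting the last row and column of $A$ leaves a defect-one partial isometry $A'$ with $p_{A'}=p_{B'}$, applies the induction hypothesis to get a unitary $W'$ intertwining $A'$ and $B'$, and then corrects by a single unimodular scalar $\xi$ so that $W'\oplus[\xi]$ intertwines $A$ and $B$. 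If you want to salvage your approach, you would need either to carry out a genuine column-by-column uniqueness argument for your Hessenberg form or to supply an independent proof of the model realization; as written, the proposal has a gap precisely where the difficulty lies.
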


\begin{proof}
\noindent$(\Rightarrow)$ 
If $A,B \in \M_n$ are partial isometries and 
$A \cong B$, then $A$ and $B$ have the same characteristic polynomials \cite[Thm.~9.3.1]{GarciaHorn}.

\smallskip\noindent$(\Leftarrow)$ 
We proceed by induction on $n$.
The base case $n=1$ is true because every
$1 \times 1$ partial isometry with one-dimensional kernel is $[0]$.
Suppose for our induction hypothesis that two $n \times n$ partial isometries with defect index one are unitarily similar
whenever they have the same characteristic polynomial.

Let $A,B \in \M_{n+1}$ be partial isometries with defect index one and suppose that $p_A= p_B$.  Note that $0$ is an eigenvalue of both $A$ and $B$.
In light of Schur's theorem on unitary triangularization \cite[Thm.~10.1.1]{GarciaHorn}, we may assume that 
\begin{equation*}
A = 
\begin{bmatrix}
 A' & \vec{a} \\
 \vec{0}^* & \alpha \\
\end{bmatrix} 
\qquad\text{and}\qquad B = 
\begin{bmatrix}
 B' & \vec{b} \\
 \vec{0}^* &  \alpha \\
\end{bmatrix},
\end{equation*}
in which $\alpha \in \C$, $\vec{a}, \vec{b} \in \C^n$, and 
$A', B' \in \M_n$ are upper-triangular matrices with
$p_{A'} = p_{B'}$ and $0$ as their $(1,1)$ entries.
Because $A$ and $B$ have one-dimensional kernels and $A',B'$ have first column $\vec{0}$, 
we have $\ker A = \ker B = \vecspan\{\vec{e}_1\}$ and
\begin{equation}\label{eq:ABXY}
A = [\vec{0}\,\,X]
\qquad \text{and} \qquad
B = [\vec{0}\,\,Y],
\end{equation}
in which $X,Y \in \M_{(n+1) \times n}$.  Use \eqref{eq:ABXY} to compute
$A^*A = B^*B$, the orthogonal projection onto
$\{\vec{e}_1\}^{\perp}$, and deduce that
\begin{equation*}
X^*X = Y^*Y = I_n
\end{equation*}
and $\norm{\vec{a}}= \norm{\vec{b}}$.  Then $X$
has orthonormal columns and hence the final $n$ columns of $A$ are orthonormal.
Consequently, the final $n-1$ columns of $A'$ are orthonormal and its first column is $\vec{0}$.
This implies that $A'$ is a partial isometry with one-dimensional kernel.  The same reasoning
applies to $B'$.  Since $p_{A'} = p_{B'}$, the induction hypothesis provides
a unitary $W' \in \M_n$ such that 
$W'A' = B'W'$.
Let $\xi \in \T$ be such that $\xi W' \vec{a} = \vec{b}$.
Then $W = W'\oplus [\xi] \in \M_{n+1}$ is unitary and $WAW^* = B$ since
\begin{equation*}
\minimatrix{W}{\vec{0}}{\vec{0}^*}{\xi}
\minimatrix{A'}{\vec{a}}{\vec{0}^*}{\alpha}
\minimatrix{W^*}{\vec{0}}{\vec{0}^*}{\overline{\xi}}
= \minimatrix{WA'W^*}{\overline{\xi}W\vec{a}}{\vec{0}^*}{\alpha}
= \minimatrix{B'}{\vec{b}}{\vec{0}^*}{\alpha}.
\end{equation*}
This completes the induction.
\end{proof}

\begin{Example}\label{Example:HalmosBad}
The matrices
\begin{equation}\label{ABHML}
A = 
\begin{bmatrix}
 0 & 1 & 0 & \0 \\
 0 & 0 & 1 & \0 \\
 0 & 0 & 0 & \0 \\
 \0 & \0 & \0 & 0 \\
\end{bmatrix}
\qquad \text{and} \qquad
B=
\begin{bmatrix}
 0 & 1 & \0 & \0 \\
 0 & 0 & \0 & \0 \\
 \0 & \0 & 0 & 1 \\
 \0 & \0 & 0 & 0 \\
\end{bmatrix}
\end{equation}
are partial isometries, $p_A = p_B$, and $\dim \ker A = \dim \ker B = 2$.
However, $A$ and $B$ are not similar since they
have different Jordan canonical forms.  In particular, $A$ and $B$ are not unitarily similar.
Thus, the one-dimensional kernel condition in Theorem \ref{Theorem:HalmosUS} cannot be ignored.
We will see another unitary invariant that rectifies this in Section \ref{Section:Livsic}.
\end{Example}

\subsection{The transpose of a partial isometry}\label{Section:Transpose}
Although every $A \in \M_n$ is similar to $A^{\mathsf{T}}$ \cite[Thm.~11.8.1]{GarciaHorn},
it is not always the case that $A \cong A^{\mathsf{T}}$ \cite[Pr.~159]{HalmosLA}.  In this section
we tackle the question of when $A \cong A^{\mathsf{T}}$ for a partial isometry $A \in \M_n$.

\begin{Proposition}\label{Proposition:Transpose}
If $A \in \M_n$ is a partial isometry with one-dimensional kernel, then $A \cong A^{\mathsf{T}}$.
\end{Proposition}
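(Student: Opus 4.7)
The plan is to invoke the Halmos--McLaughlin theorem on unitary similarity of partial isometries with one-dimensional kernels (Theorem \ref{Theorem:HalmosUS}). To apply it, I need to verify three things about $A^{\mathsf{T}}$: it is a partial isometry, it has a one-dimensional kernel, and it has the same characteristic polynomial as $A$.

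First I would check that $A^{\mathsf{T}}$ is a partial isometry. Starting from $AA^*A = A$ and taking the transpose of both sides gives $A^{\mathsf{T}}(A^*)^{\mathsf{T}}A^{\mathsf{T}} = A^{\mathsf{T}}$, which is exactly the defining identity for $A^{\mathsf{T}}$ once one observes that $(A^*)^{\mathsf{T}} = \overline{A} = (A^{\mathsf{T}})^*$. Alternatively one could first apply Proposition \ref{Proposition:Basic}(a) to conclude that the complex conjugate $\overline{A}$ is a partial isometry and then take its conjugate transpose.

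Next I would check the kernel dimension and the characteristic polynomial. Since $\rank A^{\mathsf{T}} = \rank A$, the rank--nullity theorem gives $\dim \ker A^{\mathsf{T}} = n - \rank A = \dim \ker A = 1$. For the characteristic polynomial,
\begin{equation*}
p_{A^{\mathsf{T}}}(z) = \det(zI - A^{\mathsf{T}}) = \det\bigl((zI - A)^{\mathsf{T}}\bigr) = \det(zI - A) = p_A(z).
\end{equation*}

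With the three hypotheses of Theorem \ref{Theorem:HalmosUS} verified, that theorem delivers $A \cong A^{\mathsf{T}}$. There is no substantive obstacle here; the whole argument is essentially a packaging of standard facts, and the work has been done in Theorem \ref{Theorem:HalmosUS}. The only thing to be mildly careful about is the distinction between $A^*$ and $A^{\mathsf{T}}$, which is why confirming the partial-isometry property of $A^{\mathsf{T}}$ (rather than simply quoting Proposition \ref{Proposition:Basic}(a)) is the one step that deserves a brief verification.
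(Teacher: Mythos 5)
Your proposal is correct and follows exactly the paper's argument: verify that $A^{\mathsf{T}}$ is a partial isometry with one-dimensional kernel and the same characteristic polynomial as $A$, then invoke Theorem \ref{Theorem:HalmosUS}. The extra details you supply (transposing $AA^*A=A$ and noting $(A^*)^{\mathsf{T}}=(A^{\mathsf{T}})^*$, rank--nullity, $\det(zI-A^{\mathsf{T}})=\det(zI-A)$) are all sound and simply make explicit what the paper leaves to the reader.
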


\begin{proof}
Suppose that $A \in \M_n$ is a partial isometry with one-dimensional kernel.
Then $A^{\mathsf{T}}$ is a partial isometry with one-dimensional kernel and $p_A = p_{A^{\mathsf{T}}}$,
so Theorem \ref{Theorem:HalmosUS} ensures that $A \cong A^{\mathsf{T}}$.
\end{proof}

Example \ref{Example:UET} below demonstrates that a partial isometry with
two-dimensional kernel need not be unitarily similar to its transpose.  Our next lemma
provides a simple condition that ensures a matrix is unitarily similar to its transpose.

\begin{Lemma}\label{Lemma:UET}
If $A \in \M_n$ is unitarily similar to a complex symmetric (self-transpose) matrix, then $A \cong  A^{\mathsf{T}}$.
\end{Lemma}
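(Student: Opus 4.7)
The plan is a short direct computation. Write $A = USU^*$ with $U \in \M_n$ unitary and $S = S^{\mathsf{T}}$. I would then transpose both sides and use $(U^*)^{\mathsf{T}} = \overline{U}$ together with $U^{\mathsf{T}} = (\overline{U})^*$ to obtain
\begin{equation*}
A^{\mathsf{T}} = (USU^*)^{\mathsf{T}} = \overline{U}\, S^{\mathsf{T}}\, (\overline{U})^* = \overline{U}\, S\, (\overline{U})^*,
\end{equation*}
where the last equality uses the symmetry hypothesis $S = S^{\mathsf{T}}$.

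Next I would substitute $S = U^*AU$ back in:
\begin{equation*}
A^{\mathsf{T}} = \overline{U}\, U^* A U\, (\overline{U})^* = W A W^*, \qquad \text{where } W = \overline{U}\, U^*.
\end{equation*}
Since $\overline{U}$ and $U^*$ are both unitary, $W$ is unitary, and we conclude $A \cong A^{\mathsf{T}}$.

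There is no real obstacle here; the argument is essentially one line once one is careful about the identities $(U^*)^{\mathsf{T}} = \overline{U}$ and $U^{\mathsf{T}} = (\overline{U})^*$. The only thing to watch is not to confuse $A^{\mathsf{T}}$ with $A^*$, so I would keep the complex conjugation explicit throughout.
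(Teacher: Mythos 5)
Your proof is correct and follows essentially the same route as the paper: transpose the unitary similarity relation with the complex symmetric matrix, use $S=S^{\mathsf{T}}$ together with $(U^*)^{\mathsf{T}}=\overline{U}$, and conclude that a product of unitaries (your $W=\overline{U}\,U^*$, the paper's $V=U^{\mathsf{T}}U$) conjugates $A$ to $A^{\mathsf{T}}$. The only difference is cosmetic: you write the conjugation explicitly as $A^{\mathsf{T}}=WAW^*$, while the paper states the equivalent intertwining relation $VA=A^{\mathsf{T}}V$.
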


\begin{proof}
If $S = UAU^*$, in which $S = S^{\mathsf{T}}$ and $U$ is unitary, then
$UAU^* = S = S^{\mathsf{T}} = \overline{U} A^{\mathsf{T}} U^{\mathsf{T}}$
and hence $VA = A^{\mathsf{T}}V$, in which $V = U^{\mathsf{T}}U$ is unitary.
\end{proof}

The condition in Lemma \ref{Lemma:UET}
is sufficient but not necessary.  The first author and James Tener showed that in dimensions eight and above
$A \cong A^{\mathsf{T}}$ may hold while $A$ is not unitarily similar to a complex
symmetric matrix \cite{UET}.  On the other hand, if $A \in \M_n$ for some $n\leq 7$ and $A \cong A^{\mathsf{T}}$,
then $A$ is unitarily similar to a complex symmetric matrix.

The following theorem, whose proof depends upon the theory of
complex symmetric operators \cite{CSO, CSO2, SNCSO}, 
is due to the first author and Warren Wogen \cite{CSPI}.

\begin{Theorem}\label{Theorem:CSO}
A partial isometry
\begin{equation*}
\minimatrix{X}{0}{Y}{0},
\end{equation*}
in which $X$ is square and $X^*X + Y^*Y = I$, 
is unitarily similar to a complex symmetric matrix
if and only if $X$ is.
\end{Theorem}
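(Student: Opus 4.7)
The plan is to use the operator-theoretic characterization that a matrix $A \in \M_n$ is unitarily similar to a complex symmetric matrix if and only if there exists a \emph{conjugation} $C$ on $\C^n$ (an antilinear isometric involution) satisfying $CAC = A^*$; such an $A$ is called a complex symmetric operator (CSO). Fix the following geometric data associated to $T = \minimatrix{X}{0}{Y}{0}$: the initial space $\mathcal{I} = (\ker T)^{\perp} = \C^r \oplus \{\vec 0\}$, the final space $\mathcal{F} = \ran T$, the inclusion $i: \C^r \to \mathcal{I}$ given by $v \mapsto \twovector{v}{\vec 0}$, and the isometry $V = \twovector{X}{Y}: \C^r \to \mathcal{F}$ (isometric because $V^*V = X^*X + Y^*Y = I_r$). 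A direct computation with the block form of $T$ gives the identities $Ti = V$ and $T^*V = i$, which are the technical engine of what follows.

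For the forward direction, suppose $T$ is a CSO via a conjugation $C$ on $\C^n$ with $CTC = T^*$. Applying this to $T^*T$ yields $CT^*TC = TT^*$, so $C$ sends the projection onto $\mathcal{I}$ to the projection onto $\mathcal{F}$; hence $C|_{\mathcal{I}}: \mathcal{I} \to \mathcal{F}$ is an antilinear isometric bijection. I define
\[
C_X := V^{-1} \circ C|_{\mathcal{I}} \circ i : \C^r \to \C^r,
\]
an antilinear isometry. Evaluating $CT = T^*C$ on $i(v)$ and on $V(u)$ and using $Ti = V$ and $T^*V = i$, the relations collapse cleanly to $C_X^2 = I_{\C^r}$ and $C_X X C_X = X^*$, so $X$ is a CSO.

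For the reverse direction, start with a conjugation $C_X$ on $\C^r$ satisfying $C_X X C_X = X^*$. I first set $C\twovector{v}{\vec 0} := V(C_X v) = \twovector{X C_X v}{Y C_X v}$, which is an antilinear isometry $\mathcal{I} \to \mathcal{F}$. The task is then to extend $C$ to $\mathcal{I}^{\perp} = \ker T$ so that $C(\mathcal{I}^{\perp}) = \mathcal{F}^{\perp} = \ker T^*$ and the globally defined $C$ is a conjugation satisfying $CTC = T^*$. The companion relation $C_X(I_r - X^*X)C_X = I_r - XX^*$ (immediate from $C_X X C_X = X^*$), combined with $Y^*Y = I_r - X^*X$ and the polar decomposition $Y = W|Y|$, produces a canonical antilinear isometric bijection $\mathcal{I}^{\perp} \to \mathcal{F}^{\perp}$ that stitches together with $C|_{\mathcal{I}}$.

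The main obstacle is verifying the involution property $C^2 = I$ for the extended $C$. When $Y \neq 0$, the orthogonal decompositions $\mathcal{I} \oplus \mathcal{I}^{\perp}$ and $\mathcal{F} \oplus \mathcal{F}^{\perp}$ of $\C^n$ are distinct, so $C$ is not block-diagonal in either decomposition and $C^2 = I$ imposes a nontrivial coupling between $C|_{\mathcal{I}}$ and $C|_{\mathcal{I}^{\perp}}$. The condition $X^*X + Y^*Y = I_r$ is precisely what forces the projections $VV^*$ and $I - VV^*$ onto $\mathcal{F}$ and $\mathcal{F}^{\perp}$ to be expressible through $X$ and $Y$ in a way compatible with the prescribed $C|_{\mathcal{I}}$, and resolving this compatibility using the CSO theory is where the bulk of the calculation lies.
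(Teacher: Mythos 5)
The paper does not actually prove Theorem~\ref{Theorem:CSO}; it cites Garcia--Wogen \cite{CSPI}, whose argument runs through the theory of complex symmetric operators (in particular the refined polar decomposition of $X$), so there is no in-paper proof to compare against line by line. Judged on its own terms, your forward direction is fine: with $i\vec{v}=\twovector{\vec v}{\vec 0}$, $V=\twovector{X}{Y}$, one has $T=Vi^*$, $T^*=iV^*$, $T^*T=ii^*$, $TT^*=VV^*$, and the asserted collapses do occur: $C_X^2=V^*C\,(iV^*)\,Ci=V^*(CT^*C)i=V^*Ti=I_r$, the isometry of $C_X$ follows from $C\,ii^*\,C=VV^*$, and evaluating $CT=T^*C$ on $i\vec v$ and $V\vec u$ gives $CV=iC_X$ and then $C_XX=X^*C_X$, i.e.\ $C_XXC_X=X^*$. (You should still say a word on why ``complex symmetric operator'' and ``unitarily similar to a complex symmetric matrix'' coincide, via a $C$-real orthonormal basis, but that is standard.)

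The reverse direction, however, is a genuine gap: it is the substantive half of the theorem, and you never construct the conjugation, you only announce that a ``canonical'' antilinear bijection $\ker T\to\ker T^*$ built from the polar decomposition $Y=W|Y|$ will ``stitch together'' with $C|_{\mathcal I}$, and you explicitly defer the verification of $C^2=I$ and (implicitly) of $CTC=T^*$. Worse, the hint points in the wrong direction: the restriction of any admissible $C$ to $\ker T$ is not a free choice of map $\ker T\to\ker T^*$ to be manufactured from $W$ and $|Y|$. The intertwining $CT=T^*C$, applied to $\twovector{\vec v}{\vec 0}$ together with $C\twovector{\vec v}{\vec 0}=\twovector{XC_X\vec v}{YC_X\vec v}$, \emph{forces}
\begin{equation*}
C\twovector{\vec 0}{Y\vec v}=\twovector{(I-XX^*)C_X\vec v}{-YX^*C_X\vec v},
\end{equation*}
and one must then check that this is well defined (using $\ker Y=\ker(I-X^*X)$ and $C_X(I-X^*X)C_X=I-XX^*$), isometric, lands in $\ker T^*$, and that together with $C|_{\mathcal I}$ it squares to the identity; only on the complementary piece $\{\vec 0\}\oplus(\ran Y)^{\perp}$ is there freedom, where any conjugation of that subspace completes the construction. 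These verifications are exactly the content of the theorem (they are where $X^*X+Y^*Y=I$ and $C_XXC_X=X^*$ get used), and without them your argument is a plan rather than a proof; a map concocted only from $W|Y|$, ignoring the forced formula above, would in general fail the intertwining. Either carry out this construction and the three checks, or follow \cite{CSPI} and invoke the refined polar decomposition of the complex symmetric operator $X$.
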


Theorem \ref{Theorem:N} ensures that any partial isometry is unitarily similar to one of the form
in Theorem \ref{Theorem:CSO}.  Thus, a partial isometry is unitarily similar to a complex symmetric matrix if and only if its restriction to its initial space
has that property.

\begin{Proposition}\label{Proposition:PI14}
If $A \in \M_n$ is a partial isometry and $1 \leq n \leq 4$, then $A \cong A^{\mathsf{T}}$.
\end{Proposition}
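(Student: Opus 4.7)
The plan is to establish something stronger: that every such $A$ is unitarily similar to a complex symmetric matrix. Lemma \ref{Lemma:UET} then immediately yields $A \cong A^{\mathsf{T}}$. The argument splits on $r = \rank A$.

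If $r = n$, then Proposition \ref{Proposition:Basic}(g) forces $A$ to be unitary, and the spectral theorem produces a unitary diagonalization of $A$. Every diagonal matrix is complex symmetric, so this case is handled immediately.

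Otherwise $r \leq n - 1 \leq 3$. By Theorem \ref{Theorem:N}(c), after a unitary similarity we may assume
\begin{equation*}
A = \begin{bmatrix} B & 0 \\ C & 0 \end{bmatrix}, \qquad B \in \M_r, \quad B^*B + C^*C = I_r.
\end{equation*}
At this point I invoke the theorem of Garcia--Putinar (from the complex-symmetric-operator literature cited in \cite{CSO}) that every square matrix of size at most $3$ is unitarily similar to a complex symmetric matrix. Applied to $B$, this produces a unitary $V \in \M_r$ such that $VBV^*$ is complex symmetric. Theorem \ref{Theorem:CSO} then transfers this property from the $B$-block to $A$ itself, so $A$ is unitarily similar to a complex symmetric matrix. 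Lemma \ref{Lemma:UET} finishes.

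The dimension hypothesis $n \leq 4$ is used in exactly one place: the appeal to the Garcia--Putinar classification, which applies because $B$ has size at most $n - 1 \leq 3$. This is the main technical ingredient, and it is also where the argument stops: the $\leq 3$ classification genuinely fails at size $4$, so for a $5 \times 5$ partial isometry with $\rank A = 4$ the $B$-block is $4 \times 4$ and this route breaks down. The forward direction (everything else) is essentially bookkeeping on top of Theorems \ref{Theorem:N} and \ref{Theorem:CSO} and Proposition \ref{Proposition:Basic}(g).
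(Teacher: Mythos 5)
Your argument hinges on the claim that every square matrix of size at most $3$ is unitarily similar to a complex symmetric matrix, and that claim is false. The true general statement is only for size at most $2$ (this is \cite[Cor.~1]{SNCSO}, the fact the paper itself uses); at size $3$ it already fails. A standard counterexample is the nilpotent matrix
\begin{equation*}
T = \begin{bmatrix} 0 & 1 & 0 \\ 0 & 0 & 2 \\ 0 & 0 & 0 \end{bmatrix} \in \M_3,
\end{equation*}
which is not even unitarily similar to its transpose (matrices of this kind are exactly the point of the paper's citation \cite[Pr.~159]{HalmosLA}), and hence by Lemma \ref{Lemma:UET} cannot be unitarily similar to a complex symmetric matrix. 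Consequently your proof breaks in the case $n = 4$, $\rank A = 3$, where the block $B$ produced by Theorem \ref{Theorem:N}(c) is $3 \times 3$ and you have no licence to declare it complex symmetric up to unitary similarity. (Your closing remark that the classification ``genuinely fails at size $4$'' is off by one for the same reason.) The strengthened conclusion you aim for --- that every partial isometry in dimension at most $4$ is unitarily similar to a complex symmetric matrix --- happens to be true, but it needs the partial-isometry structure, not a general $3 \times 3$ fact.

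The repair is exactly what the paper does for this case: when $\rank A = n-1$ the kernel is one-dimensional, so $A$ and $A^{\mathsf{T}}$ are defect-one partial isometries with the same characteristic polynomial, and the Halmos--McLaughlin theorem (Theorem \ref{Theorem:HalmosUS}, packaged as Proposition \ref{Proposition:Transpose}) gives $A \cong A^{\mathsf{T}}$ directly, with no complex-symmetry claim needed. If you dispose of the defect-one case this way (and of the unitary and zero cases as you already do), every remaining case with $n \leq 4$ has $\rank A \leq 2$, the block $B$ is at most $2 \times 2$, and your route through \cite[Cor.~1]{SNCSO}, Theorem \ref{Theorem:CSO}, and Lemma \ref{Lemma:UET} is then sound and coincides with the paper's treatment of the $n=4$, rank-two case.
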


\begin{proof}
For $n=1$ the result is obvious.
If $A \in \M_2$ is a partial isometry, it is either $0$, unitary, or has a one-dimensional kernel.
In all three cases, $A \cong A^{\mathsf{T}}$.  An alternate approach is to use Lemma \ref{Lemma:UET}
after noting that every
$2\times 2$ matrix is unitarily similar to a complex symmetric matrix \cite[Cor.~1]{SNCSO}.

Suppose that $A \in \M_3$ is a partial isometry.  If $\rank A = 0$, then $A = 0$ and we are done.
If $\rank A = 1$, then $A$ is unitarily similar to a complex symmetric matrix \cite[Cor.~5]{SNCSO} and 
we may apply Lemma \ref{Lemma:UET}.
If $\rank A = 2$, then $A$ has a one-dimensional kernel and hence Proposition \ref{Proposition:Transpose} implies that $A \cong A^{\mathsf{T}}$.
If $\rank A = 3$, then $A$ is unitary and therefore $A \cong A^{\mathsf{T}}$.

Suppose that $A \in \M_4$ is a partial isometry.  Proceeding as before leaves
only the case $\rank A = 2$ unsettled.  Then $A$ is unitarily similar to 
\begin{equation*}
\minimatrix{B}{0}{C}{0},
\end{equation*}
in which $B,C \in \M_2$ and $B^*B+C^*C=I$, by Theorem \ref{Theorem:N}.  
Since every $2 \times 2$ matrix is unitarily similar to a complex symmetric matrix \cite[Cor.~1]{SNCSO},
Theorem \ref{Theorem:CSO} ensures that $A$ is unitarily similar to a complex symmetric.  Thus, $A \cong A^{\mathsf{T}}$.
\end{proof}

\begin{Example}\label{Example:UET}
The conditions in Propositions \ref{Proposition:Transpose} and 
\ref{Proposition:PI14} are best possible. The partial isometry 
\begin{equation*}
A = 
\begin{bmatrix}
 0 & 1 & 0 & \0 & \0 \\
 0 & 0 & \frac{1}{2} & \0 & \0 \\
 0 & 0 & 0 & \0 & \0 \\
 1 & 0 & 0 & \0 & \0 \\
 0 & 0 & \frac{\sqrt{3}}{2} & \0 & \0 \\
\end{bmatrix}
\end{equation*}
is $5 \times 5$ and has a two-dimensional kernel.
Although $A$ and $A^{\mathsf{T}}$ are similar, they are not unitarily similar since the
unitarily invariant function
$f(x) = \tr x^3 {x^*}^3x^2 {x^*}^2$ assumes the values
$f(A) = \frac{1}{4}$ and $f(A^{\mathsf{T}}) = \frac{1}{16}$.  
The peculiar choice of trace is motivated by
Djokovi\'c \cite[Thm.~4.4]{Djokovic} (see Section \ref{Section:LowDimensions}).
In fact, $f(x)$ is Djokovi\'c's twentieth unitary invariant; the first nineteen are unable to distinguish $A$ from $A^{\mathsf{T}}$.
\end{Example}

\subsection{Liv\v{s}ic characteristic functions}\label{Section:Livsic}
Theorem \ref{Theorem:HalmosUS} provides a simple criterion to determine whether two partial
isometries of defect one are unitarily similar.  Example \ref{Example:HalmosBad} illustrates that
the defect-one condition cannot be overlooked.  A suitable replacement of Theorem 
\ref{Theorem:HalmosUS} for higher defect is due to Liv\v{s}ic \cite{Livsic}.

Let $A \in \M_n$ be a partial isometry with defect $r\geq 1$ whose spectrum contained in $\D$ (so $A$ is completely non-unitary; see Section \ref{Section:CNU}).
Let $\vec{v}_1,\vec{v}_2, \ldots, \vec{v}_r$ be an orthonormal basis for $\ker A$. 
Theorem \ref{Theorem:Polar} ensures that $A$ has a unitary extension $U$ (in fact many of them). 
For $z\in \D$, define
\begin{equation}\label{Liv-Def}
w_{A}(z)= z \big[\langle (U - z I)^{-1} \vec{v}_{i}, \vec{v}_{j}\rangle \big] \big[\langle (U - z I)^{-1} U\vec{v}_{i}, \vec{v}_{j}\rangle \big]^{-1} \in \M_r.
\end{equation}
Liv\v{s}ic showed that $w_{A}$ is an analytic, contractive $\M_r$-valued function on $\D$ such that $w_A(\zeta)$ is unitary for $\zeta \in \T$.
He showed that different choices of 
$\{\vec{v}_1,\vec{v}_2, \ldots, \vec{v}_r\}$ and $U$ result in 
$Q_{1} w_{A} Q_{2}$, where $Q_1, Q_2$ are constant unitary matrices. The function $w_{A}$ (more precisely, the family of functions) is 
the \emph{Liv\v{s}ic characteristic function} of $A$ and
it is a unitary invariant for the partial isometries with defect $r$ \cite{Livsic}.

\begin{Theorem}\label{LivsicThm}
Let $A, B \in \M_n$ be partial isometries with defect $r\geq 1$ and whose spectra are contained in $\D$.
Then $A$ and $B$ are unitarily similar if and only if there are unitary $Q_1,Q_2 \in \M_r$ such that
\begin{equation*}
w_{A}(z) = Q_1 w_{B}(z) Q_2
\end{equation*}
for all $z \in \D$.
\end{Theorem}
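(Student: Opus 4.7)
The plan is to prove the two implications separately, with the reverse direction requiring a functional-model theorem.

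Forward direction. Suppose $B = VAV^*$ for some unitary $V \in \M_n$. Theorem \ref{Theorem:Polar} (and the discussion following it on unitary extensions) ensures that $A$ has a unitary extension $U$; then $VUV^*$ is a unitary extension of $B$, since it agrees with $B$ on $(\ker B)^{\perp} = V(\ker A)^{\perp}$ and is unitary on $\C^n$. If $\{\vec{v}_1,\ldots,\vec{v}_r\}$ is an orthonormal basis of $\ker A$, then $\{V\vec{v}_1,\ldots,V\vec{v}_r\}$ is an orthonormal basis of $\ker B$. For these choices, using $(VUV^*-zI)^{-1}=V(U-zI)^{-1}V^*$, each matrix entry in \eqref{Liv-Def} simplifies as
\[ \langle(VUV^*-zI)^{-1}V\vec{v}_i,\,V\vec{v}_j\rangle = \langle(U-zI)^{-1}\vec{v}_i,\,\vec{v}_j\rangle, \]
and analogously for the factor containing $U$. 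Hence $w_B(z) = w_A(z)$ for this specific choice of basis and extension, and the ambiguity statement cited after \eqref{Liv-Def}, applied to any other admissible choices computing $w_B$, delivers the unitaries $Q_1, Q_2$ in general.

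Converse direction. This is the substantive half. The plan is to invoke a finite-dimensional instance of Sz.-Nagy--Foias functional-model theory: to each cnu partial isometry $A$ of defect $r$ one associates a canonical model operator $\mathcal{M}(w_A)$ acting on a Hilbert space $\mathcal{H}(w_A)$ built from the matrix-valued function $w_A$ (either a de Branges--Rovnyak-type space, or the model space $H^2(\C^r)\ominus w_A H^2(\C^r)$ restricted to the finite-dimensional piece carved out by $\sigma(A)\subset\D$), and one proves $A \cong \mathcal{M}(w_A)$. Granted this, the hypothesis $w_A = Q_1 w_B Q_2$ induces a unitary $\mathcal{H}(w_A)\to\mathcal{H}(w_B)$ via the actions of $Q_1$ and $Q_2$ on the coefficient space $\C^r$, and this unitary intertwines $\mathcal{M}(w_A)$ with $\mathcal{M}(w_B)$. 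Composing the three equivalences $A \cong \mathcal{M}(w_A) \cong \mathcal{M}(w_B) \cong B$ closes the argument.

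Main obstacle. The hard step is the model equivalence $A \cong \mathcal{M}(w_A)$: one must extract from the resolvent data defining $w_A$ enough information to build an explicit intertwining unitary, and then verify that the construction depends only on the equivalence class of $w_A$ under $w \mapsto Q_1 w Q_2$. This well-posedness is what justifies treating $w_A$ (up to that ambiguity) as a complete invariant rather than merely a covariant associated with chosen auxiliary data. The cnu hypothesis $\sigma(A)\subset\D$ is essential at precisely this stage, since it rules out a unitary direct summand (cf.\ Theorem \ref{Theorem:CNU}) and thereby forces the model to capture all of $A$; without it, the unitary part would be invisible to $w_A$ and the converse would fail. Once the model theorem is in hand, both directions follow cleanly from naturality of the construction.
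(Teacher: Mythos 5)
Your forward direction is essentially the paper's argument: conjugate a unitary extension and an orthonormal basis of the kernel by $V$, use $(VUV^*-zI)^{-1}=V(U-zI)^{-1}V^*$ to see the entries of \eqref{Liv-Def} are unchanged, and absorb any other admissible choices into the stated ambiguity $w\mapsto Q_1wQ_2$. The paper does exactly this (written out only for $r=1$), so that half is correct.

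The converse is where the theorem's content lies, and your proposal does not prove it: everything is funneled into the assertion $A\cong\mathcal{M}(w_A)$, which you yourself label the main obstacle and leave as a black box. This is not a routine citation in the present setting, for two reasons. First, the function in \eqref{Liv-Def} is defined through an auxiliary unitary extension $U$ of $A$, not through the defect operators of $A$; to invoke Sz.-Nagy--Foias or de Branges--Rovnyak model theory you would first have to show that $w_A$ coincides, up to constant unitary factors, with the characteristic function of the cnu contraction $A$, and that identification is itself the substance of Liv\v{s}ic's theorem and is nowhere addressed. Second, the well-posedness of the model construction under $w\mapsto Q_1 w Q_2$ is flagged but not argued. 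By contrast, the paper's sketch (for $r=1$) is elementary and self-contained: diagonalizing $U$ yields a Herglotz-type formula expressing $w_A$ through the eigenvalues $\xi_i$ of $U$ and the weights $|q_i|^2$, so after adjusting extensions one may assume $w_A=w_B$, which forces the unitary extensions $U_A$ and $U_B$ to have the same eigenvalues with multiplicities; hence $XU_AX^*=U_B$ for a unitary $X$ that carries $(\ker A)^\perp$ onto $(\ker B)^\perp$, and restricting gives $XAX^*=B$. To repair your write-up you must either prove the model equivalence $A\cong\mathcal{M}(w_A)$ in this finite-dimensional setting (including the identification of the Liv\v{s}ic function with the usual characteristic function) or replace it with a concrete recovery argument of the paper's kind; as written, the substantive implication is assumed rather than established.
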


\begin{proof}
The details are technical so we only sketch the proof in the case $r=1$.  
Let $A, B \in \M_n$ be partial isometries with defect $1$ and whose spectra are contained in $\D$.
In this case,
$\ker A = \vecspan\{\vec{v}\}$ for some unit vector $\vec{v}$.  Then
\begin{equation*}
w_{A}  = \frac{z \langle (U - z I)^{-1} \vec{v}, \vec{v}\rangle}{\langle (U - z I)^{-1} U \vec{v}, \vec{v}\rangle},
\end{equation*}
in which $U \in \M_n$ is a unitary extension of $A$.  One can verify that $w_A$ only changes by a unimodular constant if one selects
a different $U$.

If $B = V A V^{*}$ for some unitary $V \in \M_n$, then $V U V^{*}$ is a unitary extension of $B$ and
$\ker B = \vecspan\{V \vec{v}\}$.  Thus,
\begin{align*}
w_{B}(z) & =  \frac{z\langle (V U V^{*} - z I)^{-1} V \vec{v}, V \vec{v}\rangle}{\langle (V U V^{*} - z I)^{-1} V U V^{*} V \vec{v}, V \vec{v}\rangle}
 =  \frac{z\langle (U - z I)^{-1}  \vec{v},  \vec{v}\rangle}{\langle ( U - z I)^{-1} U  \vec{v}, \vec{v}\rangle}
 = w_{A}(z).
\end{align*}

For the other direction, we first give an alternate formula for $w_{A}$. 
Let $U \in \M_n$ be a unitary extension of $A$ and write
\begin{equation*}
U = Q \diag(\xi_1,\xi_2,\ldots,\xi_n) Q^*,
\end{equation*}
in which $Q \in \M_n$ is unitary and $|\xi_1| = |\xi_2| = \cdots = |\xi_n| = 1$.
Denote the $i$th entry of $\vec{q} = Q^{*} \vec{v}$ by $q_i$.  Then
\begin{equation*}
\langle (U - z I)^{-1} \vec{v}, \vec{v}\rangle = \sum_{j = 1}^{n} \frac{1}{\xi_j - z} |q_i|^2
\quad\text{and}\quad
\langle (U - z I)^{-1} U \vec{v}, \vec{v}\rangle = \sum_{j = 1}^{n} \frac{\xi_j}{\xi_j - z} |q_i|^2.
\end{equation*}
From here we see that 
\begin{align*}
w_{A}(z) & = \frac{z \langle (U - z I)^{-1} \vec{v}, \vec{v}\rangle}{\langle (U - z I)^{-1} U \vec{v}, \vec{v}\rangle}\\
& = \frac{\displaystyle \sum_{i = 1}^{n} \frac{z + \xi_i  + z - \xi_i}{z - \zeta_i} |q_i|^2}{\displaystyle\sum_{i = 1}^{n} \frac{z + \xi_i - z + \xi_i}{z - \xi_i} |q_i|^2}\\
& = \frac{\displaystyle\sum_{i = 1}^{n} \frac{z + \xi_i}{z - \xi_i} |q_i|^2 + \sum_{i = 1}^{n} |q_i|^2}{\displaystyle\sum_{i = 1}^{n} \frac{z + \xi_i}{z - \xi_i}  |q_i|^2 - \sum_{i = 1}^{n} |q_i|^2}\\
& = \frac{\displaystyle\sum_{i = 1}^{n} \frac{z + \xi_i}{z - \xi_i} |q_i|^2 + 1}{\displaystyle\sum_{i = 1}^{n} \frac{z + \xi_i}{z - \xi_i} |q_i|^2 - 1}
\end{align*}
since 
\begin{equation*}
\sum_{i = 1}^{n} |q_i|^2 = \|Q^{*} \vec{v}\|^2 = \|\vec{v}\|^2 = 1.
\end{equation*}
A similar formula holds for $w_B(z)$.  Now suppose that $w_{A} = \xi w_{B}$ for some $\xi \in \T$. By adjusting the unitary extension of either $A$ or $B$ we can assume that $w_A = w_B$ (an equality of rational functions). Thus,
$U_{A}$ and $U_{B}$ have the same eigenvalues and multiplicities, so
they are unitarily similar:  $X U_{A} X^{*} = U_{B}$ for some unitary matrix $X$. 
Since $X (\ker A)^{\perp} = (\ker B)^{\perp}$ and $U_{A}|_{(\ker A)^{\perp}} = U_{B}|_{(\ker B)^{\perp}}$, it follows that $X A X^{*} = B$ and hence $A$ and $B$ are unitarily similar. 
\end{proof}

\begin{Example}
The partial isometry
\begin{equation*}
A  = 
\begin{bmatrix}
 0 & -\frac{1}{\sqrt{2}} & \frac{1}{2} \\[3pt]
 0 & \frac{1}{\sqrt{2}} & \frac{1}{2} \\[3pt]
 0 & 0 & \frac{1}{\sqrt{2}} \\
\end{bmatrix}
\end{equation*}
is completely non-unitary since $\sigma(A) = \{0, \frac{1}{\sqrt{2}}\} \subset \D$.
A unitary extension for $A$ is 
\begin{equation*}
U = 
\begin{bmatrix}
 -\frac{1}{2} & -\frac{1}{\sqrt{2}} & \frac{1}{2} \\[3pt]
 -\frac{1}{2} & \frac{1}{\sqrt{2}} & \frac{1}{2} \\[3pt]
 \frac{1}{\sqrt{2}} & 0 & \frac{1}{\sqrt{2}} \\
\end{bmatrix}
\end{equation*}
and $\ker A = \vecspan\{ (1,0,0)\}$.
A computation using \eqref{Liv-Def} shows that 
\begin{equation*}
w_{A}(z) = - z \bigg(\frac{z - 1/\sqrt{2}}{1 - z/\sqrt{2}}\bigg)^2.
\end{equation*}
This is a finite Blaschke product whose zeros are the eigenvalues of $A$ with the corresponding multiplicities. 
\end{Example}

\begin{Example}
Consider the partially isometric matrix
\begin{equation*}
A =\begin{bmatrix}
 0 & 0 & 0 & 0 \\
 \frac{1}{\sqrt{2}} & -\frac{1}{\sqrt{2}} & 0 & 0 \\[3pt]
 \frac{1}{2} & \frac{1}{2} & 0 & 0 \\[3pt]
 \frac{1}{2} & \frac{1}{2} & 0 & 0 \\
\end{bmatrix}.
\end{equation*}
Since
$\sigma(A) = \{-\frac{1}{\sqrt{2}}, 0\} \subset \D$, we may apply Liv\v{s}ic's theorem.
Noting that $\ker A = \vecspan\{(0, 0, 0, 1), (0, 0, 1, 0)\}$, we see that
$A$ has unitary extension 
\begin{equation*}
U = \begin{bmatrix}
 0 & 0 & 1 & 0 \\
 \frac{1}{\sqrt{2}} & -\frac{1}{\sqrt{2}} & 0 & 0 \\[3pt]
 \frac{1}{2} & \frac{1}{2} & 0 & -\frac{1}{\sqrt{2}} \\[3pt]
 \frac{1}{2} & \frac{1}{2} & 0 & \frac{1}{\sqrt{2}} \\
\end{bmatrix}.
\end{equation*}
A computation with \eqref{Liv-Def} yields the $2 \times 2$ matrix-valued function
\begin{equation*}
w_{A}(z) = 
\begin{bmatrix}
 \frac{z}{\sqrt{2}} & \frac{z^2 \left(2 z+\sqrt{2}\right)}{2 \left(z+\sqrt{2}\right)} \\[4pt]
 -\frac{z}{\sqrt{2}} & \frac{z^2 \left(2 z+\sqrt{2}\right)}{2 \left(z+\sqrt{2}\right)} \\
\end{bmatrix}.
\end{equation*}
In particular, $w_{A}(\zeta)$ is unitary for every $\zeta \in \T$.
\end{Example}

\begin{Example}
Consider the partial isometry
$$A = 
\begin{bmatrix}
 0 & \frac{1}{2} & 0 & \frac{1}{2} & 0 \\[3pt]
 0 & 0 & 0 & 0 & 0 \\
 0 & \frac{1}{2} & 0 & \frac{1}{2} & 0 \\[3pt]
 0 & \frac{1}{\sqrt{2}} & 0 & -\frac{1}{\sqrt{2}} & 0 \\[3pt]
 0 & 0 & 0 & 0 & 0 \\
\end{bmatrix},$$
which has unitary extension 
$$U = 
\begin{bmatrix}
 0 & \frac{1}{2} & -\frac{1}{\sqrt{2}} & \frac{1}{2} & 0 \\[3pt]
 1 & 0 & 0 & 0 & 0 \\
 0 & \frac{1}{2} & \frac{1}{\sqrt{2}} & \frac{1}{2} & 0 \\[3pt]
 0 & \frac{1}{\sqrt{2}} & 0 & -\frac{1}{\sqrt{2}} & 0 \\[3pt]
 0 & 0 & 0 & 0 & 1 \\
\end{bmatrix}.$$
We have 
$\sigma(A) = \{-\frac{1}{\sqrt{2}}, 0\} \subset \D$ 
and 
$$\ker A = \vecspan\{(0, 0, 0, 0, 1), (0, 0, 1, 0, 0), (1, 0, 0, 0, 0)\}.$$
A computation with \eqref{Liv-Def} yields the $3 \times 3$ matrix-valued function 
\begin{equation*}
w_{A}(z) = 
\begin{bmatrix}
 z & 0 & 0 \\
 0 & \frac{z}{\sqrt{2}} & \frac{z^2 \left(2 z+\sqrt{2}\right)}{2 \left(z+\sqrt{2}\right)}
   \\[5pt]
 0 & -\frac{z}{\sqrt{2}} & \frac{z^2 \left(2 z+\sqrt{2}\right)}{2
   \left(z+\sqrt{2}\right)} \\
\end{bmatrix}.
\end{equation*}
As expected, $w_{A}(\zeta)$ is unitary for every $\zeta \in \T$.
\end{Example}

\begin{Example}
For the two matrices $A$ and $B$ from \eqref{ABHML},
\begin{equation*}
w_{A}(z) = 
\begin{bmatrix}
 z & 0 \\
 0 & z^3 \\
\end{bmatrix}
\qquad \text{and} \qquad
w_{B} = 
\begin{bmatrix}
 z^2 & 0 \\
 0 & z^2 \\
\end{bmatrix}.
\end{equation*}
There are no unitaries $Q_1, Q_2$ such that $w_{A}(z) = Q_1 w_{B}(z) Q_2$ for all $z \in \D$.  
If there were, then $ |z| = \| w_A (z) \| = \| w_B(z) \|  = |z|^2$ for all $z \in \D$, which is impossible.
\end{Example}

\section{The compressed shift}\label{Section:Compressed}

If a partial isometry $A \in \M_n$ satisfies 
\begin{equation}\label{eq:PIK1}
\sigma(A) = \{0, \lambda_1, \lambda_2, \ldots \lambda_{n - 1}\} \subset \D 
\qquad \text{and} \qquad \operatorname{\dim} \ker A = 1,
\end{equation} 
then there is a tangible representation of $A$ as a certain operator on a Hilbert space of rational functions.
What follows is a highly abbreviated treatment.  
See \cite{MR3526203, MR3793610} for the basics and \cite{N1,N2} for an encyclopedic treatment.

\subsection{A concrete model}
For a partial isometry $A \in \M_n$ that satisfies \eqref{eq:PIK1}, 
the \emph{model space} corresponding to $A$ is
\begin{equation*}
\K_A = \bigg\{\frac{a_{0} + a_{1} z + a_{2} z^2 + \cdots + a_{n - 1} z^{n - 1}}{(1  - \overline{\lambda_1} z) (1 - \overline{\lambda_2} z) \cdots (1 - \overline{\lambda_{n - 1}} z)}: a_j \in \C\bigg\}.
\end{equation*}
We endow $\K_A$ with a Hilbert-space structure by regarding it as a subspace of $L^2 = L^2(\T)$, with inner product 
\begin{equation*}
\langle f, g \rangle = \int_{\T} f(\zeta) \overline{g(\zeta)} dm(\zeta),
\end{equation*}
in which $dm(\zeta) = |d\zeta|/2 \pi$ is normalized Lebesgue measure on the unit circle $\T$. 
Let $H^2$ denote the Hardy space of analytic functions $f:\D\to\C$ with square-summable Taylor coefficients at the origin.
It can be viewed as a subspace of $L^2$ by considering boundary values on $\T$ (see \cite{MR3526203} and the references therein).
We associate to the partial isometry $A$, with data \eqref{eq:PIK1}, the $n$-fold Blaschke product
\begin{equation}\label{oijhboijhviuhvuhgv}
B_{A}(z) = z \prod_{j = 1}^{n - 1} \frac{z - \lambda_j}{1 - \overline{\lambda_j} z}.
\end{equation}
Then an exercise with the Cauchy integral formula confirms that $\K_{A} = H^2 \ominus B_{A} H^2$. 
Moreover, $\K_{A}$ is a reproducing kernel Hilbert space with kernel 
\begin{equation*}
k^{A}_{\lambda}(z) = \frac{1 - \overline{B_A(\lambda)} B_A(z)}{1 - \overline{\lambda} z}.
\end{equation*}
A convenient orthonormal basis for $\K_A$ is the \emph{Takenaka basis} \cite[Prop.~5.9.2]{MR3526203}.

\begin{Proposition}
Let $A \in \M_n$ be a partial isometry that satisfies \eqref{eq:PIK1}.  Then
\begin{align*}
v_1(z) &= 1,\\
v_2(z) &= z \frac{\sqrt{1 - |\lambda_1|^2}}{1 - \overline{\lambda_1} z},\\
v_3(z) &= z \frac{z - \lambda_1}{1 - \overline{\lambda_1} z} \frac{\sqrt{1 - |\lambda_2|^2}}{1 - \overline{\lambda_2} z},\\
v_4(z) &= z \frac{z - \lambda_1}{1 - \overline{\lambda_1} z} \frac{z - \lambda_2}{1 - \overline{\lambda_2} z} \frac{\sqrt{1 - |\lambda_3|^2}}{1 - \overline{\lambda_3} z},\\
&\,\,\,\vdots\\
v_{n}(z) &= z \bigg(\prod_{j = 1}^{n - 2} \frac{z - \lambda_j}{1 - \overline{\lambda_j} z}\bigg) \frac{\sqrt{1 - |\lambda_{n - 1}|^2}}{1 - \overline{\lambda_{n - 1}} z},
\end{align*}
is an orthonormal basis for $\K_{A}$. 
\end{Proposition}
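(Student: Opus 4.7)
The plan is to build the orthonormal basis one step at a time by peeling off factors of the inner function $B_A$. Introduce the elementary Blaschke factors $b_j(z) = (z-\lambda_j)/(1-\overline{\lambda_j}z)$ and the partial products
\begin{equation*}
\Psi_0 = 1, \qquad \Psi_1(z) = z, \qquad \Psi_k(z) = z\, b_1(z) b_2(z) \cdots b_{k-1}(z) \quad (2 \le k \le n),
\end{equation*}
so that $\Psi_n = B_A$ and $\Psi_{k-1}$ divides $\Psi_k$ in $H^\infty$ with quotient $z$ when $k=1$ and $b_{k-1}$ when $k \ge 2$.

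The main ingredient I would invoke is the standard orthogonal decomposition $\K_{IJ} = \K_I \oplus I\,\K_J$ for any pair of inner functions $I, J$. This follows from the two identities $H^2 = \K_I \oplus IH^2$ and $IH^2 = I\K_J \oplus IJH^2$, using that multiplication by the inner function $I$ is an isometry on $H^2$. Applied successively with $I = \Psi_{k-1}$ and $J$ equal to $z$ (for $k=1$) or $b_{k-1}$ (for $k \ge 2$), this produces the ascending chain
\begin{equation*}
\{0\} = \K_{\Psi_0} \subset \K_{\Psi_1} \subset \cdots \subset \K_{\Psi_n} = \K_A,
\end{equation*}
with one-dimensional increments $\K_{\Psi_k} \ominus \K_{\Psi_{k-1}} = \Psi_{k-1}\,\K_{b_{k-1}}$ for $k \ge 2$, and $\K_{\Psi_1} = \K_z$ the space of constants.

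It then remains to identify each increment explicitly. For a single Blaschke factor $b_j$ the model space $\K_{b_j}$ is one-dimensional, and its reproducing kernel at $\lambda_j$ is $(1 - \overline{b_j(\lambda_j)}\,b_j(z))/(1-\overline{\lambda_j}z) = 1/(1-\overline{\lambda_j}z)$, whose $H^2$-norm equals $(1-|\lambda_j|^2)^{-1/2}$; the normalized basis vector is therefore $\sqrt{1-|\lambda_j|^2}/(1-\overline{\lambda_j}z)$. Multiplying by the inner (hence isometric) multiplier $\Psi_{k-1}$ produces exactly the vector $v_k$ of the proposition for $k \ge 2$, while $v_1 = 1$ is the unit basis vector of $\K_{\Psi_1}$. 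Induction on $k$ from $1$ to $n$ then yields the orthonormal basis $\{v_1,\ldots,v_n\}$ of $\K_{\Psi_n}=\K_A$.

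The only mildly delicate step is the orthogonal decomposition $\K_{IJ} = \K_I \oplus I\K_J$, a familiar identity in model-space theory but one that deserves to be written out; the remainder is bookkeeping with finite Blaschke products and the reproducing-kernel calculation for $\K_{b_j}$.
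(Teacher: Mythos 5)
Your argument is correct. Note that the paper itself offers no proof of this proposition: it simply cites the Takenaka basis from the literature (Prop.\ 5.9.2 of the model-spaces reference), so there is no in-paper argument to compare against. Your telescoping decomposition $\K_{IJ}=\K_I\oplus I\K_J$, applied along the chain $\Psi_0,\Psi_1,\ldots,\Psi_n=B_A$, together with the observation that $\K_{b_j}$ is spanned by the unit vector $\sqrt{1-|\lambda_j|^2}/(1-\overline{\lambda_j}z)$ and that multiplication by the inner factor $\Psi_{k-1}$ is isometric, is exactly the standard proof of this fact and fills the gap the survey leaves to its references.
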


The orthogonal projection $P_A:L^2\to L^2$ with range $\K_A$ is
\begin{equation*}
(P_{A} f)(z) = \sum_{i = 1}^{n} \langle f, v_i\rangle v_i(z).
\end{equation*}
This permits us to define the following operator.

\begin{Definition}
Let $A \in \M_n$ be a partial isometry that satisfies \eqref{eq:PIK1}.
The \emph{compressed shift} $S_A : \K_A\to\K_A$ is
\begin{equation*}
(S_{A} f)(z) = \sum_{i = 1}^{n} \langle z f, v_i\rangle v_{i}(z).
\end{equation*}
\end{Definition}

The operator $S_A$ enjoys the following properties; see \cite{MR3793610} for details.
It is a completely non-unitary partial isometry on $\K_A$ (that is, $S_A S_A^* S_A = S_A$).
Moreover, $\sigma(S_{A}) = \{0, \lambda_1, \lambda_2, \ldots, \lambda_{n - 1}\}$ and
$\ker S_A = \vecspan\{ B_A(z)/z\}$ is one dimensional.
The matrix representation of $S_A$ with respect to the Takenaka basis is
\begin{equation}\label{eq:modelHM}
\begin{bmatrix}
0 &  &  &  & \\
  & \lambda_1 &  &  & \\
    &  &  \ddots & \\
  &  q_{i, j} &  & \lambda_{n - 2} & \\
  &  &  &  & \lambda_{n - 1}\\
\end{bmatrix},
\end{equation}
in which
\begin{equation*}
q_{i,j} = \left(\prod_{k = i + 1}^{j - 1} (-\overline{\lambda_k})\right) \sqrt{1 - |\lambda_i|^2} \sqrt{1 - |\lambda_j|^2}.
\end{equation*}
In particular, $A$ is unitarily similar to \eqref{eq:modelHM} because they are partial isometries with one-dimensional kernels
and the same characteristic polynomials  (Theorem \ref{Theorem:HalmosUS}).

\begin{Proposition}\label{Proposition:ShiftPI}
If $A \in \M_n$ is a partial isometry that satisfies \eqref{eq:PIK1}, then $A$ is unitarily similar to \eqref{eq:modelHM}.
\end{Proposition}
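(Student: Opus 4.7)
The plan is to invoke the Halmos--McLaughlin unitary similarity criterion, Theorem \ref{Theorem:HalmosUS}, which says that two partial isometries with one-dimensional kernels are unitarily similar if and only if they have the same characteristic polynomial. So the proof reduces to checking that $A$ and the matrix $M$ displayed in \eqref{eq:modelHM} are both partial isometries with one-dimensional kernels and with equal characteristic polynomials.

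First I would identify $M$ as the matrix representation of the compressed shift $S_A$ in the Takenaka basis $\{v_1,\ldots,v_n\}$; this identification is already indicated in the paragraph preceding the proposition. The facts quoted there about $S_A$ then give everything one needs: $S_A$ is a partial isometry (so $M$ is, being unitarily similar to $S_A$ via change of basis), $\ker S_A = \operatorname{span}\{B_A(z)/z\}$ is one-dimensional, and $\sigma(S_A) = \{0,\lambda_1,\ldots,\lambda_{n-1}\}$. Because $M$ is upper triangular, its eigenvalues are read off its diagonal as $0,\lambda_1,\ldots,\lambda_{n-1}$, consistent with $\sigma(S_A)$.

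Next I would compare characteristic polynomials. By hypothesis $\sigma(A)=\{0,\lambda_1,\ldots,\lambda_{n-1}\}$; since $A$ has one-dimensional kernel, the eigenvalue $0$ has geometric multiplicity $1$, but more importantly the spectrum consists of $n$ values counted with multiplicity (because $A \in \M_n$ and the listed numbers account for them all). The same is true of $M$ by its triangular structure. Hence
\begin{equation*}
p_A(z) = z\prod_{j=1}^{n-1}(z-\lambda_j) = p_M(z).
\end{equation*}
Since both $A$ and $M$ are partial isometries with one-dimensional kernels and identical characteristic polynomials, Theorem \ref{Theorem:HalmosUS} yields $A\cong M$.

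The only potentially delicate point is justifying that $M$ really is a partial isometry with one-dimensional kernel; this is not verified by direct matrix computation in the excerpt but is asserted via the model-space machinery. If one preferred a self-contained argument, the main obstacle would be checking by hand that the columns of $M$ (using the explicit $q_{i,j}$) form, after deleting the zero first column, an orthonormal set--equivalently, that $M^*M$ equals the orthogonal projection onto $\operatorname{span}\{\vec{e}_2,\ldots,\vec{e}_n\}$. This is a bookkeeping exercise with Blaschke factors and the identities $\sum_{k}|q_{i,k}|^2 + |\lambda_i|^2 = 1$ coming from the telescoping products $\prod(1-|\lambda_k|^2)$. But given the compressed-shift results already cited, invoking them and then applying Theorem \ref{Theorem:HalmosUS} is the cleanest route.
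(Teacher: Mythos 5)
Your proposal is correct and follows essentially the same route as the paper: the paper likewise identifies \eqref{eq:modelHM} as the matrix of the compressed shift $S_A$ in the Takenaka basis, notes that it and $A$ are partial isometries with one-dimensional kernels and the same characteristic polynomial, and concludes $A \cong$ \eqref{eq:modelHM} by Theorem \ref{Theorem:HalmosUS}. No gaps to report.
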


Thus, the compressed shift is a model for certain types of partial isometries.

\subsection{Numerical range}\label{Section:Numerical}

The \emph{numerical range} of $A \in \M_n$ is 
\begin{equation*}
W(A) = \big\{\langle A \vec{x}, \vec{x}\rangle: \|\vec{x}\| = 1\big\}.
\end{equation*}

The continuity of $f(\vec{x})= \langle A \vec{x}, \vec{x}\rangle$, the compactness of the unit ball in $\C^n$, and the
Cauchy--Schwarz inequality ensure that $W(A)$ is a compact subset of $\{z \in \C: |z| \leq \|A\|\}$.
The Toeplitz--Hausdorff theorem says that $W(A)$ is convex \cite[Thm.~10.3.9]{MR3793610}.

The numerical range is unitarily invariant: if $A \cong B$, then $W(A) = W(B)$.  This permits
us to characterize the numerical range of a normal matrix using the following notions.
A \emph{convex combination} of
$\xi_1, \xi_2,\ldots, \xi_n \in \C$ is an expression
\begin{equation*}
c_1 \xi_1 + c_2 \xi_2 + \cdots + c_n \xi_n,
\end{equation*}
in which 
\begin{equation*}
c_1,c_2,\ldots, c_n \in [0,1] \qquad \text{and} \qquad 
c_1 + c_2 + \cdots + c_n = 1.
\end{equation*}
The \emph{convex hull} of $\{\xi_1,\xi_2,\ldots,\xi_n\}$
is the set of all convex combinations of $\xi_1,\xi_2,\ldots,\xi_n$.  It is the smallest filled polygon that contains
the points $\xi_1,\xi_2,\ldots,\xi_n$.

\begin{Proposition}\label{9UhYYt7}
The numerical range of a normal matrix is the convex hull of its eigenvalues. 
\end{Proposition}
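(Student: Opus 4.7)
The plan is to exploit the spectral theorem together with the unitary invariance of the numerical range to reduce immediately to the diagonal case, and then to verify the two inclusions by a direct computation using the coordinates of a unit vector.

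First, I would invoke the unitary invariance. If $A \in \M_n$ is normal, the spectral theorem gives a unitary $U$ and eigenvalues $\lambda_1, \lambda_2, \ldots, \lambda_n \in \sigma(A)$ such that $A \cong D$, where $D = \diag(\lambda_1, \lambda_2, \ldots, \lambda_n)$. Since $W(A) = W(D)$, it suffices to prove that $W(D) = \co\{\lambda_1,\lambda_2,\ldots,\lambda_n\}$.

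Next, I would establish the forward containment $W(D) \subseteq \co\{\lambda_1,\ldots,\lambda_n\}$. For any unit vector $\vec{x} = (x_1, x_2, \ldots, x_n)^{\mathsf{T}} \in \C^n$, a straightforward computation gives
\begin{equation*}
\inner{D\vec{x}, \vec{x}} = \sum_{i=1}^{n} \lambda_i |x_i|^2.
\end{equation*}
Setting $c_i = |x_i|^2$, we have $c_i \in [0,1]$ and $\sum c_i = \|\vec{x}\|^2 = 1$, so $\inner{D\vec{x}, \vec{x}}$ is a convex combination of $\lambda_1, \ldots, \lambda_n$. For the reverse containment, given any $c_1,\ldots,c_n \in [0,1]$ with $\sum c_i = 1$, define $\vec{x} = (\sqrt{c_1}, \sqrt{c_2}, \ldots, \sqrt{c_n})^{\mathsf{T}}$, which is a unit vector satisfying $\inner{D\vec{x}, \vec{x}} = \sum c_i \lambda_i$. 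Thus every convex combination of the eigenvalues lies in $W(D)$.

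There is no serious obstacle here: both inclusions reduce to the same bijection $c_i \leftrightarrow |x_i|^2$ between points of the standard simplex and squared moduli of unit-vector coordinates. The only subtlety worth noting is that the argument relies crucially on normality (so that the spectral theorem applies); for a non-normal matrix one only has $W(A) \supseteq \co \sigma(A)$ in general, not equality.
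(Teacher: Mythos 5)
Your proposal is correct and follows essentially the same route as the paper: reduce to a diagonal matrix via the spectral theorem and unitary invariance, then identify $\inner{D\vec{x},\vec{x}} = \sum_i \lambda_i |x_i|^2$ with convex combinations through the correspondence $c_i = |x_i|^2$. The only difference is presentational—you spell out the two inclusions separately where the paper states the set equality in one chain—so there is nothing to add.
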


\begin{proof}
Let $N \in \M_n$ be normal.  The spectral theorem ensures that $N$ is unitarily similar to 
a diagonal matrix $D = \diag(\xi_1, \xi_2, \ldots, \xi_n)$.  Thus,
\begin{align*}
W(N)
&= W(D) 
=\Big\{\langle D \vec{x}, \vec{x}\rangle: \|\vec{x}\|^2 = \sum_{i = 1}^{n} |x_i|^2 = 1\Big\}\\
& = \Big\{ \sum_{i = 1}^{n} \xi_i |x_i|^2: \sum_{i = 1}^{n} |x_i|^2 = 1\Big\} \\
& = \Big\{ \sum_{i = 1}^{n} c_i \xi_i : \sum_{i = 1}^{n} c_i = 1\Big\} 
\end{align*}
is the convex hull of $\{\xi_1,\xi_2,\ldots,\xi_n\}$.
\end{proof}

\begin{figure}
 \includegraphics[width=0.5\textwidth]{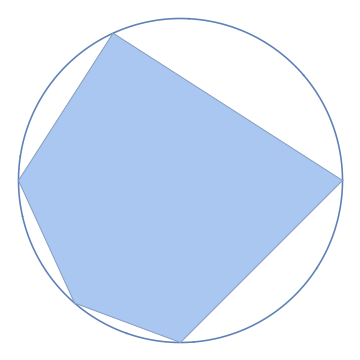}
 \caption{The numerical range of a unitary matrix with five eigenvalues on the unit circle is the convex hull of these eigenvalues.}
 \label{Figure:Five}
\end{figure}

Since the eigenvalues of a unitary matrix have unit modulus, the numerical range of a unitary matrix
is a polygon inscribed in the unit circle (Proposition \ref{9UhYYt7}); see Figure \ref{Figure:Five}.
For a partial isometry there is some beautiful geometry behind the scenes;
see the recent book of Daepp, Gorkin, Shaffer, and Voss \cite{Gorkin}.

A partial isometry $A \in \M_n$ with spectrum
$\{0,\lambda_1,\lambda_2,\ldots,\lambda_{n-1}\}$ and one-dimensional kernel, in which 
$\lambda_1,\lambda_2,\ldots,\lambda_{n-1} \in \D$, is unitarily similar to \eqref{eq:modelHM}.
The numerical range of $S_A$, and hence $W(A)$, can be computed as follows.
First consider the $(n+1)$-fold Blaschke product 
\begin{equation*}
b_A(z) = z^2 \prod_{j = 1}^{n - 1} \frac{z - \lambda_j}{1 - \overline{\lambda_j} z}.
\end{equation*}
For each $\xi \in \T$, there are $n + 1$ distinct points 
$\zeta_1, \zeta_2, \ldots, \zeta_{n + 1} \in \T$
such that $b_{A}(\zeta_i) = \xi$ for $1 \leq i \leq n + 1$ \cite[p.~48]{MR3793610}. Let
$Q_{\xi}$ denote the convex hull of $\co(\{\zeta_1, \zeta_2, \ldots, \zeta_{n + 1}\})$,
which is a $(n + 1)$-gon whose vertices are on $\T$.  Then,
\begin{equation}\label{HyGGYFRQOwq}
W(A) = \bigcap_{\xi \in \T} Q_{\xi}.
\end{equation}

\begin{Example}\label{Example:Shift2}
The partial isometry
\begin{equation*}
A = \begin{bmatrix}
0&0\\
\frac{\sqrt{3}}{2}&\frac{1}{2}\\
\end{bmatrix}
\end{equation*}
has $\sigma(A) = \{0, \frac{1}{2}\}$.  Then 
\begin{equation*}
b_{A}(z) = z^2 \bigg(\frac{z - 1/2}{1 - z/2}\bigg).
\end{equation*}
The sets $Q_{\xi}$ are filled triangles; see Figure \ref{Figure:21}.
The numerical range $W(A)$ of $A$ is the intersection of the $Q_{\xi}$,
which is an ellipse; see Figure \ref{Figure:22}.  See \cite{Gorkin} for more on the specifics about the ellipse.
\end{Example}

            \begin{figure}
		\centering
		\begin{subfigure}[b]{0.475\textwidth}
	                \centering
	                \includegraphics[width=\textwidth]{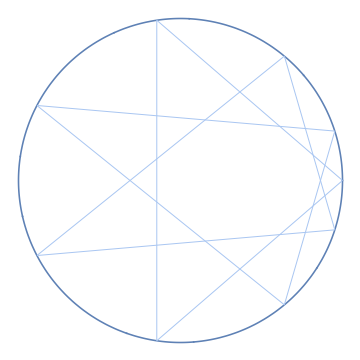}
                         \caption{The triangles $Q_{\xi}$ for $\xi = e^{2 \pi i k/3}$ with $k = 0, 1, 2$.}
	                \label{Figure:21}
	        \end{subfigure}
	        \quad
		\begin{subfigure}[b]{0.475\textwidth}
	                \centering
	                \includegraphics[width=\textwidth]{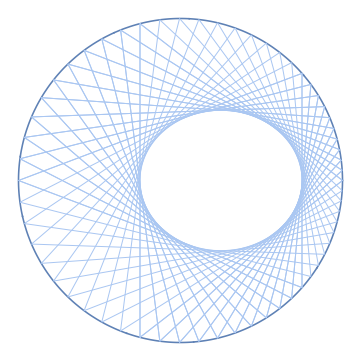}
	                \caption{$W(A)$ is the intersection of the triangles $Q_{\xi}$ for $\xi \in \T$.}
	                \label{Figure:22}
	        \end{subfigure}
	        \caption{Illustration for Example \ref{Example:Shift2}}
	      \end{figure}

\begin{Example}\label{Example:Shift3}
Consider the compressed shift $A$ with 
\begin{equation*}
\sigma(A) = \{0, \tfrac{1}{\sqrt{2}}, \tfrac{1}{\sqrt{3}}, \tfrac{1}{\sqrt{5}}\}.
\end{equation*}
The associated five-fold Blaschke product is 
\begin{equation*}
b_{A}(z) = z^2 \bigg(\frac{z - 1/\sqrt{2}}{1 - z/\sqrt{2}}\bigg)\bigg( \frac{z - 1/\sqrt{3}}{1 - z/\sqrt{3}} \bigg)\bigg(\frac{z - 1/\sqrt{5}}{1 - z/\sqrt{5}} \bigg).
\end{equation*}
The sets $Q_{\xi}$ are (irregular) pentagons; see Figure \ref{Figure:31}.
The numerical range $W(A)$ of $A$ is the intersection of the $Q_{\xi}$, which is an ellipse; see Figure \ref{Figure:32}. 
\end{Example}

A result that relates Corollary \ref{Proposition:ShiftPI} and 
the Halmos--McLaughlin theorem on defect-one partial isometries
(Theorem \ref{Theorem:HalmosUS}) is the following \cite{MR3099197, MR3134275}. 

\begin{Theorem}
If $A, B \in \M_n$ are partial isometries with spectra contained in $\D$ and one-dimensional kernels, then 
$A \cong B$ if and only if $W(A) = W(B)$. 
\end{Theorem}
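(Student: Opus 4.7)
The forward direction is immediate and does not require the defect-one or spectral hypotheses. If $A = UBU^*$ for some unitary $U \in \M_n$, then $\langle A\vec{x},\vec{x}\rangle = \langle B U^*\vec{x}, U^*\vec{x}\rangle$, and since $\vec{x} \mapsto U^*\vec{x}$ is a bijection of the unit sphere of $\C^n$, it follows that $W(A) = W(B)$.

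For the converse, the plan is to recover the characteristic polynomial of $A$ from $W(A)$ and then invoke the Halmos--McLaughlin Theorem \ref{Theorem:HalmosUS}. By Proposition \ref{Proposition:ShiftPI}, each of $A$ and $B$ is unitarily similar to its compressed shift model, so we may work directly with the associated Blaschke products $b_A$ and $b_B$ of degree $n+1$. The formula \eqref{HyGGYFRQOwq} realizes $W(A)$ as the Poncelet-type intersection $\bigcap_{\xi \in \T} Q_\xi$, in which $Q_\xi$ is the convex hull of the $n+1$ preimages $b_A^{-1}(\{\xi\}) \subset \T$; likewise for $W(B)$.

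The technical heart of the argument, and the step I expect to be the main obstacle, is a Poncelet-type uniqueness theorem asserting that the curve $W(A)$ determines $b_A$ up to an inessential post-composition by a rotation of $\T$. Intuitively, as $\xi$ traverses $\T$ once, the vertices of the circumscribing polygons $Q_\xi$ trace out an $(n+1)$-fold covering of $\T$ that is intrinsic to $W(A)$: it is determined by the requirement that each $Q_\xi$ be an $(n+1)$-gon inscribed in $\T$ and tangent to $\partial W(A)$. Reading this dynamical datum off of $W(A)$ recovers $b_A$ up to a unimodular rotation on $\T$, which does not alter its zeros in $\D$. This is precisely the content of the results of \cite{MR3099197, MR3134275} that underlie the Daepp--Gorkin--Shaffer--Voss exposition \cite{Gorkin}, and I would invoke it as a black box rather than reproving it.

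Granted this uniqueness, $W(A) = W(B)$ forces the zeros of $b_A$ and $b_B$ in $\D$ to agree with multiplicities. Since both Blaschke products have a double zero at the origin (from the $z^2$ factor) and their remaining zeros are precisely $\lambda_1,\ldots,\lambda_{n-1}$ and $\mu_1,\ldots,\mu_{n-1}$ respectively, the nonzero eigenvalues of $A$ and $B$ coincide with multiplicities. Together with the hypothesis $\dim \ker A = \dim \ker B = 1$, which fixes the multiplicity of $0$ in each characteristic polynomial, we conclude that $p_A = p_B$. Theorem \ref{Theorem:HalmosUS} then gives $A \cong B$, completing the proof.
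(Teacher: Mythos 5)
The survey does not actually prove this theorem: it is stated with a citation to \cite{MR3099197, MR3134275}, so there is no in-paper argument to measure yours against. Your forward direction is complete, and your closing reduction is sound: once you know $b_B$ is a unimodular constant times $b_A$, the zero multisets in $\D$ coincide, both characteristic polynomials equal $z\prod_j(z-\lambda_j)$, and Theorem \ref{Theorem:HalmosUS} (or simply Proposition \ref{Proposition:ShiftPI} applied to each matrix) gives $A \cong B$. (Minor quibble: $\dim\ker A=1$ fixes the geometric, not algebraic, multiplicity of $0$, but this is moot since the zero multisets of $b_A$ and $b_B$ already carry the multiplicities.) This is also the route taken in the cited literature and in \cite{Gorkin}.

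The caveat is that the step you call the ``technical heart'' is not an auxiliary lemma sitting next to the theorem; it is essentially the theorem as proved in \cite{MR3099197, MR3134275}, and two separate assertions are being absorbed into your black box. First, \eqref{HyGGYFRQOwq} only states $W(A)=\bigcap_{\xi\in\T}Q_\xi$; to read the fibers of $b_A$ off of $W(A)$ you need the converse Poncelet statement that every point of $\T$ is a vertex of exactly one $(n+1)$-gon inscribed in $\T$ whose edges are tangent to $\partial W(A)$, and that these polygons are precisely the $Q_\xi$ --- i.e., that no circumscribing polygons other than the fibers of $b_A$ occur. Second, even granting that the polygon families for $A$ and $B$ coincide, what you immediately get is $b_B=\varphi\circ b_A$ on $\T$ for some degree-one circle homeomorphism $\varphi$; upgrading $\varphi$ to a rotation (so that the zeros are preserved) is a genuine argument, not a formality. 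Both points are established in the references, so your proposal is a legitimate proof-by-citation plus a correct finishing reduction --- no less than the survey itself offers --- but it is not an independent proof, and if the goal was to derive the statement from the tools developed in this paper, the gap is exactly this Poncelet uniqueness.
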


\begin{figure}
		\centering
		\begin{subfigure}[b]{0.475\textwidth}
	                \centering
	                \includegraphics[width=\textwidth]{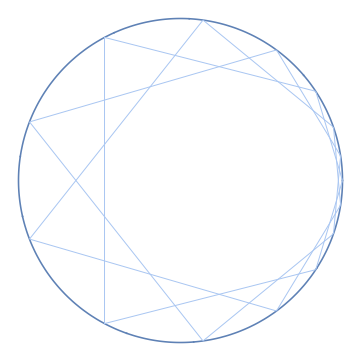}
	                \caption{The pentagons $Q_{\xi}$ for $\xi = e^{2 \pi i k/3}$ with $k = 0, 1, 2$.}
	                \label{Figure:31}
	        \end{subfigure}
	        \quad
		\begin{subfigure}[b]{0.475\textwidth}
	                \centering
	                \includegraphics[width=\textwidth]{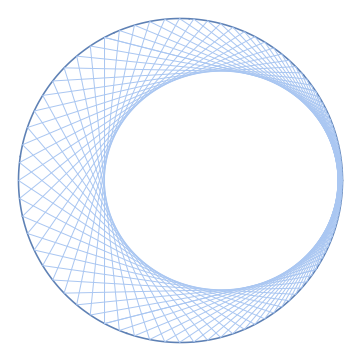}
	                \caption{$W(A)$ is the intersection of the pentagons $Q_{\xi}$ for $\xi \in \T$.}
	                \label{Figure:32}
	        \end{subfigure}
	        \caption{Illustration for Example \ref{Example:Shift3}}
	      \end{figure}

\bibliographystyle{plain}

\bibliography{PIM_references}

\end{document}